\numberwithin{equation}{section}  
        \newtheorem{theorem}{Theorem}[section]
        \newtheorem{proposition}[theorem]{Proposition}
        \newtheorem{lemma}[theorem]{Lemma}
        \newtheorem{corollary}[theorem]{Corollary} 
        \newtheorem{definition}[theorem]{Definition} 
\let\oldmarginpar\marginpar
\renewcommand\marginpar[1]{\-\oldmarginpar[\raggedleft\footnotesize #1]%
{\raggedright\footnotesize #1}}
\newcommand \uhat {\widehat u}
\newcommand \be {\begin{equation}}
\newcommand \bel {\be\label}
\newcommand \ee {\end{equation}}  
\newcommand	\RR 		{\mathbb R}   
\newcommand \del {{\partial}}
\newcommand \eps \epsilon 
\newcommand \loc {\text{loc}}
\newcommand \Mcal {\mathcal M} 
\newcommand \bei {\begin{itemize}}
\newcommand \eei {\end{itemize}}
\newcommand \trianglerightNEW \triangleright
\newcommand \rhot {\widetilde \rho}
\newcommand{\ONE}{{\mathbf{1}}}
\newcommand{\Fc}{\mathcal{F}}
\newcommand{\Pp}{\mathsf{P}}
\newcommand{\E}{\mathsf{E}}
\newcommand{\N}{\mathbb{N}}  
\newcommand{\Z}{\mathbb{Z}}
\newcommand{\R}{\mathbb{R}}
\newcommand{\U}{\mathbb{U}}
\newcommand{\V}{\mathbb{V}}
\newcommand{\D}{\mathbb{D}}
\newcommand{\Leb}{\mathrm{Leb}}
\newcommand{\sign}{\mathop{\mathrm{sgn}}}
\begin{document}

\title{Ergodicity of spherically symmetric fluid flows outside of a Schwarzschild black hole
with random boundary forcing
}

\author{Yuri Bakhtin\footnote{Courant Institute of Mathematical Sciences, New York University,  215 Mercer Street, New York, NY 10012, USA. Email: {\sl bakhtin@cims.nyu.edu.}} 
\, and 
Philippe G. LeFloch\footnote{Laboratoire Jacques-Louis Lions \& Centre National de la Recherche Scientifique, 
Universit\'e Pierre et Marie Curie (Paris 6), 4 Place Jussieu, 75252 Paris, France. 
Email : {\sl contact@philippelefloch.org.} 
\newline
2000\textit{\ AMS Subject Class.} Primary: 35L65. Secondary: 76L05. 
\textit{Key Words and Phrases.} Random forcing, ergodicity, Burgers equation, Schwarzschild black hole, Hopf--Lax--Oleinik formula, one-force-one-solution principle.
}}

\date{May 2017}
\maketitle

\begin{abstract} We consider the Burgers equation posed on the outer communication region of a Schwarzschild black hole spacetime. Assuming spherical symmetry for the fluid flow under consideration, we study the propagation and  interaction of shock waves under the effect of random forcing. First of all, considering the initial and boundary value problem with boundary data prescribed in the vicinity of the horizon, we establish a generalization of the Hopf--Lax--Oleinik formula, which takes the curved geometry into account and allows us to establish the existence of bounded variation solutions. To this end, we analyze the global behavior of the characteristic curves in the Schwarzschild geometry, including their behavior near the black hole horizon. In a second part, we investigate the long-term statistical properties of solutions when a random forcing is imposed near the black hole horizon and study the  ergodicity of the fluid flow under consideration.  We prove the existence of a random global attractor and, for the Burgers equation outside of a Schwarzschild black hole, we are able to validate the so-called `one-force-one-solution' principle. Furthermore, all of our results are also established  for a pressureless Euler model which consists of two balance laws and includes a transport equation satisfied by the integrated fluid density. 
\end{abstract} 

\vfill
 
\setcounter{tocdepth}{1} 
\tableofcontents 

\newpage 


\section{Introduction and objectives}

We are interested in long-term statistical properties of nonlinear waves in compressible fluids under
stochastic forcing. Namely, we study ergodicity of such systems in the situation where random external influences leading to propagation and  interaction of radom shock waves are given by random boundary conditions.
An archetypal system of this kind is the stochastic Burgers equation, see a review on Burgers turbulence in~\cite{MR2318582}. Descriptions of ergodic properties
for stochastic Burgers equation and other conservation laws have been given for various situations with random volume forcing in 
\cite{ekms:MR1779561,
Iturriaga:MR1952472, Gomes-Iturriaga-Khanin:MR2241814, 
Suidan:MR2141893, 
Khanin-Hoang:MR1975784, B:MR3112935, Sinai:MR1117645, 
Kifer:MR1452549, 
BCK:MR3110798,
kickb:bakhtin2016, 
Bakhtin-Li:2016arXiv160704864B,
H-and-M:2016arXiv161003415H, Dirr-Souganidis:MR2191776, Debussche-Vovelle:MR3418750},
and with random boundary conditions on a bounded interval in~\cite{B:MR2299503}. 

The goal of this paper is to extend these  existing results in several directions. First, we study stationary solutions for the inviscid Burgers equation on the half-line
$\R_+=[0,+\infty)$ for random boundary conditions imposed at~$0$. Secondly, we study stationary solutions of a similar system in geometry with nontrivial curvature. More concretely, we work with the Schwarzschild geometry which is one of the simplest nontrivial geometries. It arises in Einstein's theory of general relativity and describes the exterior of a static and spherically symmetric vacuum black hole. In the Schwarzschild geometry case, 
the analysis turns out to be more involved due to the more complex structure and behavior of characteristics representing particle trajectories.  In contrast with the existing literature, our results are stated not just for 
the Burgers
or Burgers-type equation describing the velocity vector field of the (pressureless) fluid, but for the system of conservation laws
one of which is described above and the other one is a transport equation that governs the evolution of the fluid density.

The probabilistic analysis of the long-term properties of the system in the Schwarzschild metric would be hopeless without understanding the properties of solutions of the initial-boundary value problem.  In fact, a large part of the paper is devoted to the description of the Schwarschild geometry, the Burgers--Schwarschild model, a version
of the Hopf--Lax--Oleinik variational principle that can be used to construct solutions of the initial-boundary problem, 
and the behavior of the characteristics that solve the variational principle.  All these issues are of independent interest, so let us discuss them.

The Schwarzschild metric plays a central role in general relativity since it represents the {\sl only} possible final state of matter after a long-time evolution has taken place and the geometry has become static\footnote{The angular momentum and charge of the black hole could also be taken into account.}.  The presence of a black hole
means that there is a compact region that cannot be escaped by any signal.  
We pose the problem of propagation of nonlinear waves in the so-called exterior region outside 
the black hole horizon. This relativistic Burgers equation under consideration serves as a simplified model for relativistic fluid dynamics, and the existence of shock wave solutions was recently established in \cite{MR3145067,LMO,PLFSX}.

According to the Burgers--Schwarzschild equation under consideration, the fluid particles are attracted by the black hole so that the Lagrangian orbits realizing the minima in the Hopf--Lax--Oleinik variational principle
followed by the fluid particles are not straight lines. Moreover, they form a family of curves each of which either approaches the black hole horizon or converges to spacelike infinity.  This behavior leads to a need to revisit some of the well-known properties enjoyed by the standard Burgers equation, including the variational principle and its consequences for long-term properties of solutions. 

If $M$ denotes the mass of the black hole and standard Schwarzschild coordinates $(t,r)$ are used, the spatial variable describes $r \in [2M, +\infty)$ in which $r=2M$ represents the horizon radius (in appropriate units). Since no signal can escape from the horizon, it is natural to impose stationary random forcing at some radius $r=r_* > 2M$ which can be in an arbitrary vicinity of the horizon. Under rather general conditions on the random forcing, we establish the existence of a {\sl global solution} which can be viewed as a {\sl one-point   random pullback attractor on the main ergodic component} associated with the realization of the noisy
boundary conditions. We also investigate the basin of attraction associated with this solution. For the model under consideration, we thus validate the so-called {\sl one-force-one-solution principle} (1F1S) also known as {\sl synchronization by noise} that can be viewed as a manifestation of fast memory loss in the system. For a discussion of 1F1S in other models related to the Burgers equation, we can refer to lecture notes~\cite{Bakhtin:MR3526825} or to a more recent summary of known results in the introduction to~\cite{Bakhtin-Li:2016arXiv160704864B}.

The outline of this paper is as follows. In Section~\ref{sec2}, we present the fluid model of interest by writing the Schwarzschild metric and then expressing the (pressureless) Euler system in this geometry. In Section~\ref{sec3},  we generalize the Hopf--Lax--Oleinik formula for the initial-boundary value problem for the Burgers equation in a Schwarzschild background and study the associated characteristic curves. Existence theorems for integrable and non-integrable initial data are established here. Next, we turn to the analysis of the random forcing imposed on the boundary and, in Section~\ref{sec5}, first treat the standard Burgers equation corresponding to taking $M=0$ in our model. Finally, Section~\ref{sec6} is devoted to the Euler--Schwarschild model where a random boundary forcing is imposed near the horizon. Some concluding remarks are presented in Section~\ref{sec7}. 

\vskip.5cm 

\noindent{\bf Acknowledgments.} The work of YB was partially supported by the National Science Foundation (NSF) through grant DMS-1460595. This work was done when PLF was visiting the Courant Institute of Mathematical Sciences (NYU) and  was also partially supported by the Innovative Training Networks (ITN) grant 642768 (ModCompShock).


\section{Relativistic fluid models of interest}  
\label{sec2}

\subsection{The Schwarzschild background geometry}
 
We are interested in modeling the dynamics of a fluid outside a Schwarzschild black hole (see for instance~\cite{Hawking}). By using the so-called  Schwarzschild coordinates $t \geq 0$ and $r \in (2M, +\infty)$, the domain of outer communication of this spacetime
$$
\Mcal := \big\{ (t,r, \theta, \varphi) \, \ t \geq 0, \, r \geq 2M \big\} 
$$
is described by the metric
\bel{Scharz01} 
g = - \Big(1 - {2 M \over r} \Big) dt^2 + \Big(1 - {2 M \over r} \Big)^{-1} dr^2 + r^2 \, g_{S^2}, \qquad r>2M. 
\ee
Here, $g_{S^2}:=d\theta^2+(\sin \theta)^2 \, d\varphi^2$ (with variables $\theta \in [0, 2\pi)$ and $\varphi \in [0, \pi]$) denotes the canonical metric on the unit two-sphere $S^2$. The metric coefficients are singular when $r$ approaches $2M$, but this is not a genuine geometric singularity. The boundary $r=2M$ represents the ``horizon'' of a black hole region, and the apparent singularity in \eqref{Scharz01} is not of a geometric nature but could be removed by introducing a different coordinate chart. However, the fluid equations under consideration below would become algebraically much more involved, but doing so is unnecessary if one works ``outside'' the black hole, i.e. in the domain of outer communication.  

We consider the pressureless Euler equations 
\bel{re-Euler01}
\nabla^\alpha \big( T_{\alpha\beta}(\rho,u) \big) = 0,
\ee
in which $\nabla$ denotes the Levi-Civita connection associated with the metric $g$, 
and $T^\alpha_ \beta$ is the energy-momentum tensor of a pressureless  fluid. That is, the speed of light being normalized to unit, the matter tensor 
\bel{tensor-form01}
T_{\alpha\beta} (\rho,u) = \rho  u_\alpha u_\beta 
\ee
depends on the mass-energy density of the fluid denoted by $\rho: M \mapsto (0, +\infty)$ 
and its velocity field denoted by $u= (u^\alpha)$. The latter is normalized to be a unit and future-oriented vector field, that is,  
\bel{eq-u01} 
u^\alpha u_\alpha=-1, \qquad u^0 > 0. 
\ee   We use here standard notation for the metric $g= (g_{\alpha\beta})$ and its inverse $g^{-1} = (g^{\alpha\beta})$ in coordinates $x= (x^\alpha)$, where the Greek indices describe $0,1,2,3$. 
We raise and lower indices by using the metric and therefore, for instance, we write $u_\alpha = g_{\alpha\beta} u^\beta$ and $g^\alpha_\beta = \delta^\alpha_\beta$ (the Kronecker symbol).

By using $(0,1,2,3)$ to denote the coordinates $(t,r,\theta,\varphi)$, the metric reads 
$$
(g_{\alpha\beta}) = \text{diag} \Big(-(1-2M/r), (1-2M/r)^{-1},  r^2,  r^2(\sin \theta)^2 \Big), 
$$
while its inverse is 
$$
(g^{\alpha\beta}) = \text{diag} \Big(-(1-2M/r)^{-1}, (1-2M/r),  r^{-2},  r^{-2}(\sin \theta)^{-2} \Big).
$$
After a tedious calculation, the non-vanishing Christoffel symbols 
$\Gamma_{\alpha\beta}^\gamma := {1\over 2}g^{\gamma\theta}(\del_\alpha g_{\beta\theta}+\del_ \beta g_{\alpha\theta} -\del_\theta  g_{\alpha\beta})$ are  found to be 
\be
\aligned
& \Gamma_{00}^1= {M\over r^2}(r-2M), \qquad 
&&\Gamma_{11}^1= - {M\over r(r-2M)},
\qquad
&&&&\Gamma_{01}^0= {M\over r(r-2M)},
\\
& \Gamma_{12}^2= {1\over r}, \qquad
&&\Gamma_{22}^1=-(r-2M),
\qquad 
&&&&\Gamma_{13}^3= {1\over r},
\\
& \Gamma_{33}^1=-(r-2M)(\sin \theta)^2, \qquad
&&\Gamma_{33}^2=-\sin\theta\cos\theta, 
\qquad
&&&&\Gamma_{23}^3= {\cos\theta\over \sin\theta}.
\endaligned
\ee
  

\subsection{The relativistic Euler system} 

Throughout, we assume that the fluid flow is spherically symmetric, so that the non-radial components of the fluid velocity vanish: 
\be
T^{02} =T^{03} = 0.
\ee
By expressing the Euler equations \eqref{re-Euler01} in the form 
$$
\aligned
\del_t T^{0 \beta}+\del_jT^{j\beta}+\Gamma_{00}^0T^{0 \beta}
& +\Gamma_{j0}^jT^{\beta0}+\Gamma_{0j}^0T^{j\beta}+\Gamma_{jk}^jT^{k\beta}
\\
&
  +\Gamma_{00}^\beta T^{00}+2\Gamma_{j0}^\beta T^{j0}+\Gamma_{jk}^\beta T^{jk} = 0, 
\endaligned
$$
where a tedious calculation shows that 
 the Euler equations on a Schwarzschild background read  
\bel{Euler001}
\aligned
&\del_t \Big( r(r-2M)T^{00} \Big) + \del_r \Big( r(r-2M)T^{01} \Big) = 0,
\\ 
&\del_t \Big( r(r-2M)T^{01} \Big) + \del_r  \Big( r(r-2M)T^{11} \Big) = \Omega_1, 
\\
& \Omega_1 := 3MT^{11} - {M \over r^2} (r-2M)^2T^{00} + r(r-2M)^2T^{22} + r(\sin \theta)^2 \, (r-2M)^2T^{33}.
\endaligned
\ee
With the expression \eqref{tensor-form01} of the energy-momentum tensor, we thus obtain  
\bel{EulerC01}  
\aligned
& \del_t \Big(  r (r - 2M) \rho u^0u^0 \big) \Big)
 + \del_r \Big(  r(r-2M) \rho u^0u^1 \Big) = 0,
\\
& \del_t \Big(  r^2 (1 - 2M/r) \rho u^0u^1 \Big) 
  + \del_r \Big(  r^2 (1-2M/r)^{-1}\rho u^1u^1 \Big) 
= \Omega_1, 
\\
&\Omega_1 =   3M  (1-2M/r)^2 \rho u^0u^0  
   - {M \over r^2}  \rho  u^1u^1.
\endaligned
\ee

The fluid velocity, by definition, is a future-oriented, unit timelike vector so that 
\be
(1-2M/r) u^0 u^0- (1-2M/r)^{-1}u^1 u^1=1, 
\qquad
u^0 > 0, 
\ee
while the energy-momentum tensor read 
$$
 \aligned
& T^{00}  = (1-2M/r)^2 \rho u^0 u^0, 
\quad &&T^{01} = \rho u^0 u^1, 
\\
& T^{11} =  (1-2M/r)^{-2} \rho u^1 u^1, \quad  &&T^{22} =T^{33} = 0.
\endaligned
$$  
Moreover, it is convenient to introduce  the real-valued function 
\bel{eq-def-scalV01}
u:= {1 \over (1 - 2M/r)} {u^1 \over u^0}, 
\ee
so that 
$$
(u^0)^2 = {1 \over (1 - u^2)(1-2M/r)},
\qquad (u^1)^2= (1-2M/r){u^2 \over 1- u^2}.
$$
Hence, we have formulated the Euler system for pressureless fluids on a Schwarzschild background, that is, the following system of two balance laws with unknowns $\rho \geq 0$ and $u \in (-1, 1)$
\bel{Euler01}
\aligned 
& \del_t \Big( r^2 {\rho \over 1 - u^2} \Big) +\del_r \Big(r(r-2M) {\rho \over 1 - u^2} u \Big) = 0,
\\
& \del_t \Big(r(r-2M) {\rho \over 1 - u^2} u \Big) + \del_r \Big((r-2M)^2 {\rho u^2 \over 1 - u^2} \Big)= \Omega(\rho,u,r), 
\\
& \Omega(\rho,u,r) := 3M \Big( 1 - {2M \over r} \Big) {\rho u^2 \over 1 - u^2}  -M {r-2M \over r} {\rho \over 1 - u^2}, 
\endaligned 
\ee
in which  $M\geq 0$ denotes the mass of the Schwarzschild black hole and the speed of light has been normalized to unit. Some simplification will be made in order to derive our model of interest in the present paper. 

Observe that in the limit of vanishing mass $M \to 0$, this system simplifies drastically and becomes
\bel{Euler01-vanishingM}
\aligned 
& \del_t \Big( r^2 {\rho \over 1 - u^2} \Big) +\del_r \Big( r^2 {\rho \over 1 - u^2} u \Big) = 0,
\\
& \del_t \Big( r^2 {\rho \over 1 - u^2} u \Big) + \del_r \Big( r^2 {\rho u^2 \over 1 - u^2} \Big)= 0, 
\endaligned 
\ee
which is equivalent to the non-relativistic Euler system for pressureless fluids in a flat geometry 
\bel{Euler01-vanishingM-standard}
\aligned 
& \del_t \big( \rhot \big) +\del_r \big( \rhot u \big) = 0,
\\
& \del_t \big( \rhot u \big) + \del_r \big( \rhot u^2 \big)= 0, 
\endaligned 
\ee
provided one introduces the normalized density variable
\bel{eq:nfd}
\rhot := {r^2 \rho \over 1 - u^2}. 
\ee


\subsection{The Burgers--Schwarzschild and Euler--Schwarzschild models} 

For sufficiently smooth solutions bounded away from the vacuum, we can formally manipulate the equations in \eqref{Euler01} and naturally introduce the following definition. 

\begin{definition} The {\em Burgers--Schwarzschild model} is the following balance law 
\bel{eq:Bmodel}
\aligned
& \del_t u + \Big( 1 - {2M \over r} \Big) \del_r \Big( {u^2 \over 2} \Big) 
= {M \over r^2} ( u^2 - 1 ),  
\\
&  u=u(t,r) \in (-1, 1), \qquad \, t \geq 0, \, r >2M. 
\endaligned
\ee
 \end{definition}

Recall that $M>0$ represents the mass of the black hole and the speed of light is normalized to unit. 
Next, returning to the full Euler system and being motivated by \eqref{eq:nfd}, we introduce the {\sl integrated fluid density} (i.e. the total mass of matter contained within a region $[2M, r]$)  
\be
v(t,r) := \int_{2M}^r {\rho(t,r') \over 1 - u^2(t,r')} \, {r'}^2  dr' 
\ee
and obtain 
$
\del_t \del_r v + \del_r \Big( \big( 1- {2M \over r} \big) u \del_r v \Big) = 0. 
$
After integration in space and by observing that the coefficient $\big( 1- {2M \over r} \big)$ vanishes on the horizon $r=2M$, we are naturally led to the following definition. 

\begin{definition} The (pressureless) {\em Euler--Schwarzschild model} is the following system of two balance laws with unknowns $u=u(t,r) \in (-1, 1)$ and $v=v(t,r) \geq 0$
\bel{eq:ES} 
\aligned
& \del_t u + \Big( 1 - {2M \over r} \Big) \del_r \big( {u^2 \over 2} \Big) 
= {M \over r^2} ( u^2 - 1),  \qquad  r >2M,
\\
&\del_t v + \big( 1- {2M \over r} \big) u \del_r v = 0. 
\endaligned
\ee 
 \end{definition}

Clearly, when $M \to 0$, in \eqref{eq:Bmodel}  we recover the standard Burgers equation, which is well recognized as a fundamental model for nonlinear wave propagation in (non-relativistic) fluids. For $M\neq 0$, we refer to LeFloch and co-authors in \cite{MR3145067,LMO,PLFSX} for the existence of weak solutions to the Cauchy problem, when the initial data have bounded variation.


\section{A Hopf--Lax--Oleinik formula for the Burgers--Schwarz\-schild model}
\label{sec3}

\subsection{The geometry of the characteristic curves}

Our first objective is to present a suitable generalization of the classical Hopf--Lax--Oleinik formula for the initial value problem when some initial data is imposed at some constant time and a boundary data is prescribed near the horizon. We begin by analyzing the equations satisfied by a solution to \eqref{eq:Bmodel} along a characteristic curve, i.e.  
\bel{eq:charact}
\aligned
r'(t) &= \Big( 1 - {2M \over r} \Big) u, 
\qquad 
u'(t) = {M \over r^2} ( u^2 - 1)
\endaligned
\ee 
with data $r(t_0)=r_0$ and $u(t_0)=u_0$ prescribed at some time $t_0$. 
It is convenient to first look for $u=u(r)$ {\sl as a function of $r$,} hence  
$$
-{2u \over 1 - u^2} {du \over dr} = {2M \over r(r-2M)}. 
$$
This implies (cf. Figure~\ref{u-as-function-r} for a plot for various values $C$) 
\begin{equation}
\label{eq:integral-of-motion}
\frac{1-u(r)^2}{1 - {2M \over r}} = C 
\end{equation}
for some constant $C$ and, by taking the initial data into account, we obtain the unknown velocity as a function of the radius variable
\bel{eq:31} 
{1-u(r)^2 \over 1- u_0^2} = {1 - {2M \over r} \over 1 - {2M \over r_0}}.  
\ee
On the other hand, the radius function solves the ODE 
\bel{equa33}
r'(t) = \pm \Big( 1 - {2M \over r} \Big) \sqrt{1 -  {1 - {2M \over r} \over 1 - {2M \over r_0}} (1- u_0^2)}, 
\ee
in which the sign depends upon the sign of the velocity given by \eqref{eq:31} and may change along a given characteristic curve.

\begin{figure}[htbp]
\begin{centering} 
\includegraphics[width=8cm,height=8cm,angle=0]{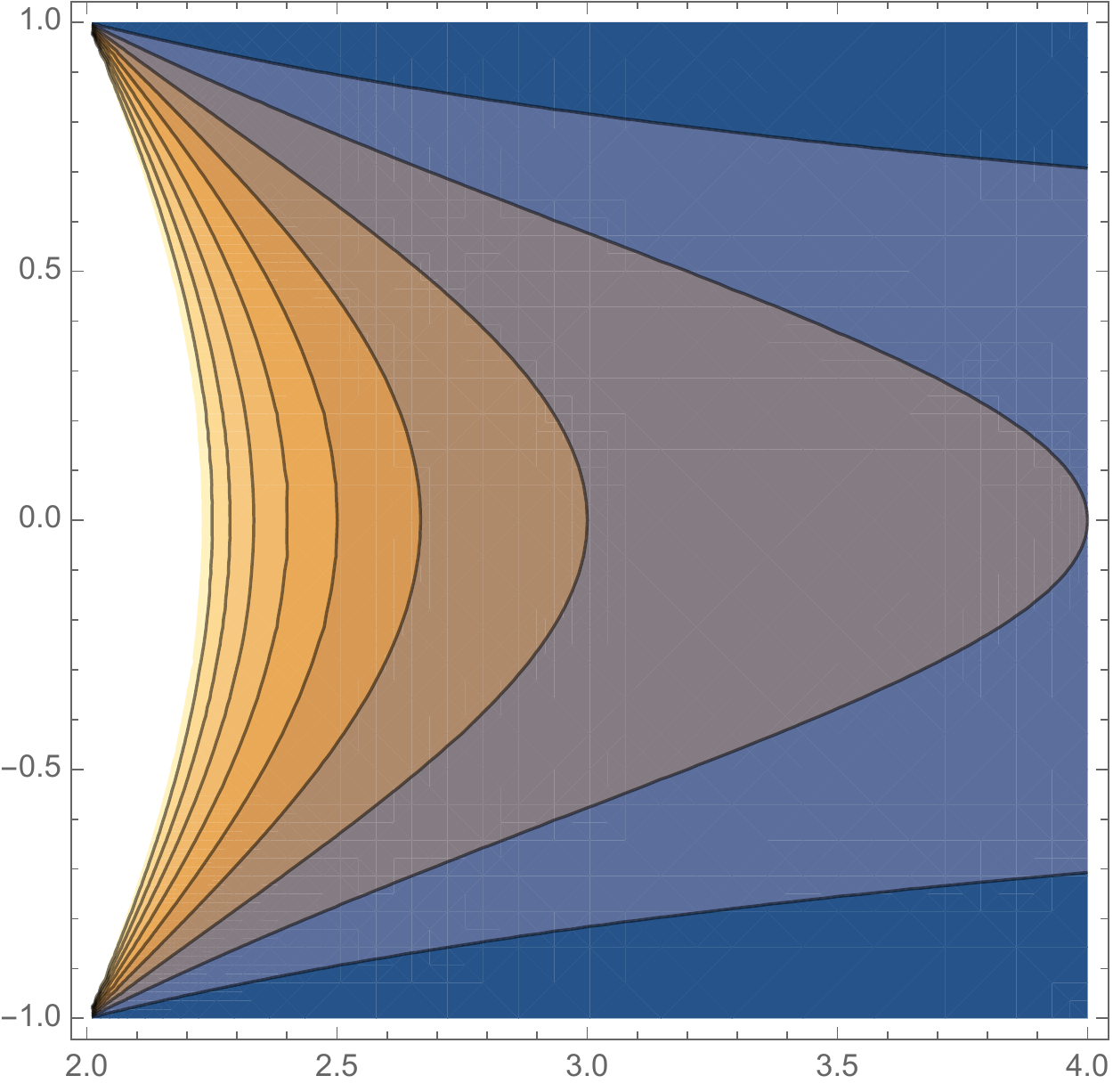}  
\par\end{centering}
\caption{Two plots (space vs. time) of the characteristic curves emanating from a point $r_0$}
\label{u-as-function-r} 
\end{figure}

It is important to introduce the {\sl escape velocity} $u_0^E$ by the condition that the right-hand side of \eqref{equa33} vanishes as $r \to + \infty$, thus
\be
u_0^E := \sqrt{2M \over r_0}.
\ee 
With this notation, \eqref{eq:31} and \eqref{equa33} read 
\bel{eq:31-escape} 
u(r)^2 = u_0^2 \Bigg( 1 + {(1-u_0^2)(u_0^E)^2 \over (1-(u_0^E)^2) u_0^2} \Big( 1 - {r_0 \over r} \Big)\Bigg),   
\ee
\bel{equa33-escape}
r'(t) = \pm \Big( 1 - {2M \over r} \Big) \sqrt{1 -  {1- u_0^2 \over 1 - (u_0^E)^2}  \Big(1 - {2M \over r} \Big)}. 
\ee
We have the following global behavior of the characteristic curves: 

\bei 

\item {\em Negative initial data}. In view of the second equation in \eqref{eq:charact} and since $u \in (-1, 1)$, the solution $u(t)$ is decreasing in the future time direction. If the initial data at $(t_0, r_0)$ is negative, that is if $u_0 \in (-1,0)$, then the solution $u=u(t)$ remains negative for all future times and the characteristic $r=r(t)$ propagates toward the black hole and asymptotically reaches the horizon in the future direction, while the solution approaches minus the speed of light (normalized to be $1$): 
\be
\lim_{t \to +\infty} r(t) = 2M, \qquad \lim_{t \to + \infty} u(t) = -1.
\ee

\item {\em Positive initial data.} If the initial data is non-negative, that is, $u_0 \in [0, 1)$,  then the characteristic $r=r(t)$  is initially {\sl moving away} from the black hole but, according to the equation $u'(t) = {M \over r^2} ( u^2 - 1 )$, the velocity decreases and we can distinguish between two distinct behaviors: 

\bei

\item {\em Initial velocity below the escape velocity:}  $u_0 < u_0^E$.
Then, there exists a finite time $t_1 >t_0$ after which $u(t) <0$ is negative and the phenomena in the previous case takes place. That is, from $t_1$ onwards, the characteristic moves toward the black hole and asymptotically
\be
\lim_{t \to +\infty} r(t) = 2M, \qquad \lim_{t \to + \infty} u(t) = -1.
\ee

\item {\em Initial velocity above the escape velocity:}  $u_0 \geq u_0^E$, in which case the characteristic moves away from the horizon for all times and never come back. Asymptotically we obtain 
\be
\label{eq:limiting-speed}
\lim_{t \to +\infty} r(t) = +\infty, 
\qquad 
\lim_{t \to + \infty} u(t) = u^\sharp_\infty := \sqrt{u_0^2 - (u_0^E)^2 \over 1 -(u_0^E)^2} \in [0, 1).
\ee

\eei

\eei 
We can also see that the conserved quantity $C$ introduced in~\eqref{eq:integral-of-motion} satisfies 
\begin{equation}
\label{eq:conserved-quantity-0}
C=\frac{u^2(r_*)-u^2_E(r_*)}{1-u^2_E(r_*)}=\frac{u^2(r)-\frac{2M}{r}}{1-\frac{2M}{r}}\in 
\left[-\frac{\frac{2M}{r_*}}{1-\frac{2M}{r_*}},1\right]= \left[-\frac{2M}{r_*-2M},1\right],
\end{equation}
and, in addition, negative values of $C$ define bounded characteristics, and nonnegative values of $C$ define unbounded  characteristics. In the latter case, the limiting value of velocity obtained in~\eqref{eq:limiting-speed} satisfies
$u^\sharp_\infty=\sqrt{C}$.

A similar dichotomy holds in the past direction $t \to -\infty$.
Clearly, the behavior above is  very much in contrast with the properties enjoyed by the standard Burgers equation, for which characteristics are {\sl straight-lines} and the solution is a {\sl constant} along them. 
This description is illustrated in Figure~1, in which any characteristic extends from the initial point $(t_0, r_0)$ in both (future and past) time directions. 
Observe also that the convergence of $r(t)$ toward 2M is exponential
\be
r(t) \simeq 2M + K e^{-|t|/(2M)} \quad \text{ as } t \to \pm\infty. 
\ee

\begin{figure}[htbp]
\begin{centering} 
\includegraphics[width=8cm,height=8cm,angle=0]{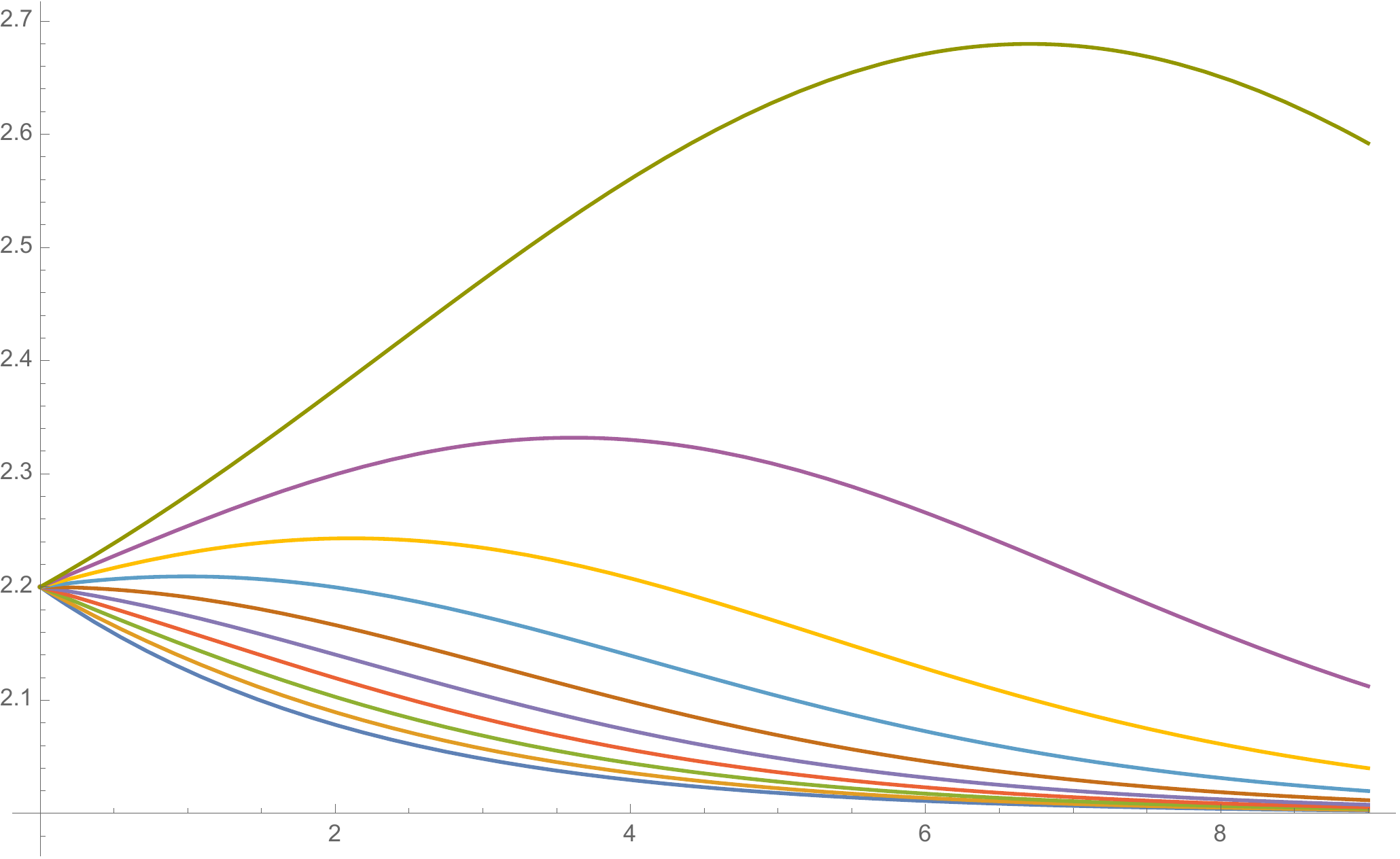} 
\includegraphics[width=8cm,height=8cm,angle=0]{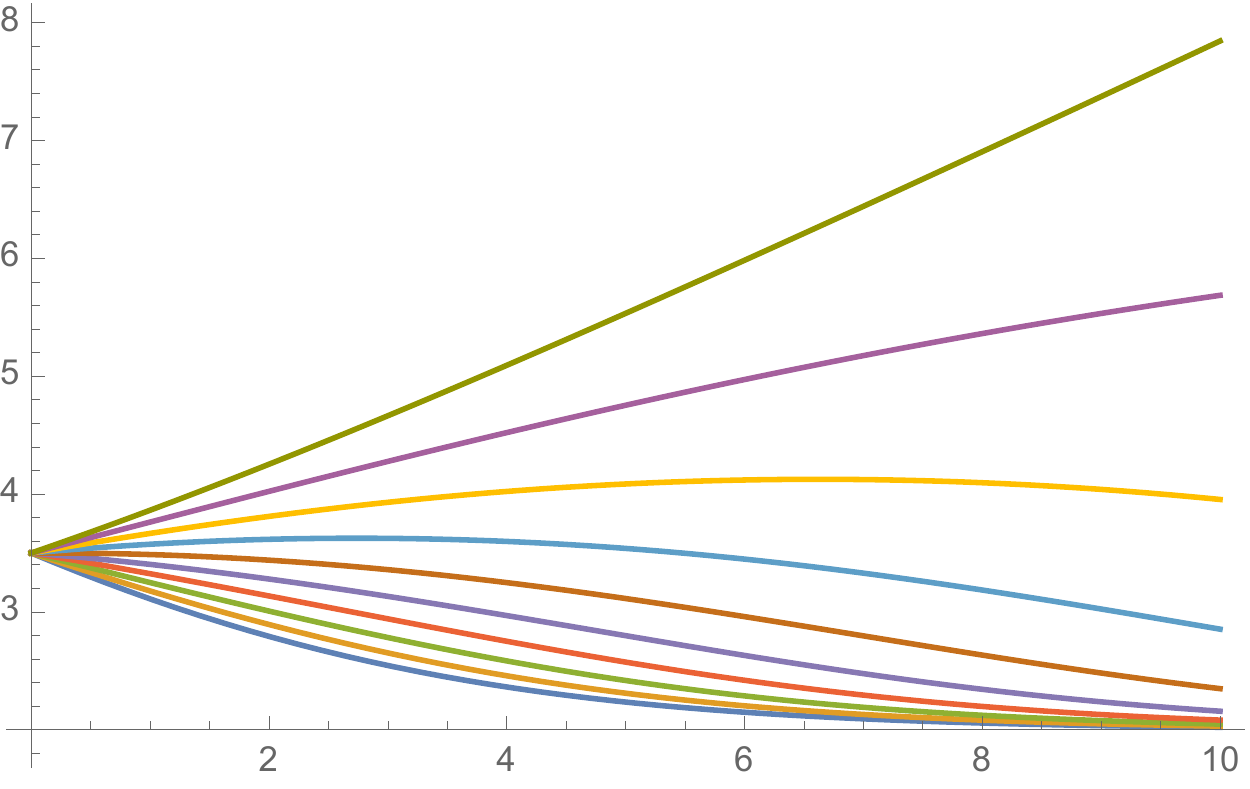} 
\par\end{centering}
\caption{Two plots (space vs. time) of the characteristic curves emanating from a point $r_0$}
\label{Fig1} 
\end{figure} 


It is interesting to consider the limiting case when $u_0=\pm 1$ in which case the characteristic satisfies
$r'(t) = \pm \Big( 1 - {2M \over r} \Big)$, which can be integrated as
\be
e^{r(t) - r_0} \Bigg({ r(t) - 2M \over r_0 - 2M} \Bigg)^{2M} = e^{\pm(t-t_0)} \qquad \text{ when } u_0 = \pm 1.
\ee
These two curves are asymptotic, in the past or future direction, to straight-lines with slope $\pm 1$, and determine the boundary of the spacetime domain which can be attained from a point $(r_0, t_0)$ 
and is nothing but the light cone from this point.   

In comparison with the standard Burgers equation, one significant difference lies in the {\sl upper bound assumed on the propagation speed,} that is, the condition $|u| <1$ or by extension $|u| \leq 1$ if the light cone is included.  No such restriction is imposed in the standard Burgers equation so that, a priori, any two points $(t_0, r_0)$ and $(t_1, r_1)$ can possibly be connected by a particle trajectory, while this is possible in our relativistic model only if the light cones of these two points have a non-empty intersection. 
We refer to Figure~\ref{Fig1} for two typical plots of the characteristic funnel emanating from a point $r_0$. 


\subsection{Formulation of the problem} 

We now extend the Hopf--Lax--Oleinik formula by following the treatment of weighted conservation laws in spherical symetry, say of the form 
$\del_t (r^2 u) + \del_r (r^2 u^2/2) = 0$, made by LeFloch and Nedelec \cite{LeFlochNedelec}
and the treatment of the boundary condition in Joseph \cite{Joseph}. For a different approach to the boundary condition, we also refer to \cite{MR949657}. We also recall that, due to the presence of shock waves interacting with the boundary, weak solutions to noninear hyperbolic equations satisfy their boundary data only in some weak sense; see \cite{Bardos-leRoux-Nedelec:MR542510} for details. 

We observe that our equation admits the {\sl conservation form} 
\bel{Conservation}
\del_t \Bigg( \Big(1 - {2M \over r} \Big)^{-2} u   \Bigg)
+ {1\over 2}  \del_r  \Bigg(  \Big(1 - {2M \over r} \Big)^{-1}  \big(u^2 - 1 \big) + 1  \Bigg)
= 0, 
\ee
which, clearly, is valid even for weak solutions. 
We are interested in the associated initial and boundary value problem formulated in the half-space region of the form $r>r_*$ with $r_* >2M$ fixed. 
We prescribe an initial data $u_0=u_0(r)$ for $r > r_*$ on the hypersurface of constant time $t=t_0$, together with a boundary condition $u_*=u_*(t)$ at $r_*$, hence
\be
u|_{t=t_0} = u_0, \qquad u|_{r=r_*} = u_*, \qquad r >r_*.
\ee
Our construction below is based on the the funnel of characteristic curves $\chi=\chi(t)=\chi(t; t_1, r_1, u_1)$
associated with any initial point $(t_1, r_1)$ and defined by the backward ODE problem 
\be
\aligned
\chi'(t)  
& = \pm \Big( 1 - {2M \over \chi(t)} \Big) \sqrt{1 -  {1 - {2M \over \chi(t)} \over 1 - {2M \over r_1}} (1- u_1^2)},   
\\
\chi(t_1) & = r_1, 
\endaligned
\ee
in which $t \in (-\infty, t_1]$. Here, $u_1 \in (-1, 1)$ is a parameter representing the initial velocity at $(t_1, r_1)$.

To an arbitrary solution $u=u(t,r)$, we associate the function
\bel{eq:defU}
U(t,r) := - \int_r^{+\infty} \Big(1 - {2M \over r'} \Big)^{-2} u(t,r') \, dr'. 
\ee  
By integrating \eqref{Conservation} over any interval $[r, +\infty] \subset [r_*, +\infty)$ and assuming that the solution approaches zero when $r \to + \infty$, we obtain 
$$
\del_t U + {1\over 2}  
 \Big(1 - {2M \over r} \Big)^{-1}    \Big( u^2 - 1 \Big) +  {1 \over 2}
= 0. 
$$ 
Then, given any real $w$ and using the inequality 
$u^2 \geq w^2 + 2 w (u - w)$, we find 
\be
\aligned
& \del_t U + \Big( 1 - {2M \over r} \Big) w \, \del_r U 
\\&
\leq \Big( 1 - {2M \over r} \Big) w \, \del_r U 
- {1\over 2}  
 \Big(1 - {2M \over r} \Big)^{-1}    \Big( w^2 + 2 w (u - w) - 1 \Big) -  {1 \over 2},
\endaligned 
\ee
that is, 
\bel{eq:339}
 \del_t U + \Big( 1 - {2M \over r} \Big) w \, \del_r U 
\leq {1\over 2}  
 \Big(1 - {2M \over r} \Big)^{-1}    \Big( w^2 + 1 \Big) -  {1 \over 2}. 
\ee 
The constant $1/2$ has been added throughout for convenience, so that we can easily recover the standard Burgers equation when $M$ approaches zero. 

We now integrate the inequality \eqref{eq:339} along an arbitrary characteristic denoted by $\chi=\chi(t)=\chi(t; t_1, r_1, u_1)$ within the time interval $t \in [t_0, t_1]$ and for the state $w$ required in \eqref{eq:339} we choose precisely the value $w=\varphi(t)=\varphi(t; t_1, r_1, u_1)$ achieved along this characteristic and defined in \eqref{eq:31}, that is, 
\be
{1-\varphi(t)^2 \over 1- u_1^2} = {1 - {2M \over \chi(t)} \over 1 - {2M \over r_1}}.  
\ee
Our next task will be to integrate the following inequality 
over the time interval $[t_0, t_1]$: 
\bel{eq:341}
\aligned
{d \over dt} U(t, \chi(t)) 
& = 
 \del_t U + \Big( 1 - {2M \over \chi(t)} \Big) \varphi(t) \, \del_r U 
\\
& \leq  {1\over 2}  
 \Big(1 - {2M \over \chi(t)} \Big)^{-1}    \Big( \varphi(t)^2 + 1 \Big) -  {1 \over 2}.
\endaligned
\ee

\subsection{The generalized Hopf--Lax--Oleinik formula}

1. We first consider characteristics that do not hit the boundary $r_*$ (within the time interval under consideration) and write 
$$
U(t_1, r_1) \leq
 U(t_0, r_0) 
+  {1\over 2}  \int_{t_0}^{t_1} \Big( 
 \Big(1 - {2M \over \chi(t)} \Big)^{-1}    \big( \varphi(t)^2 + 1 \big) - 1 \Big) \, dt.
$$
Therefore, {\sl as long as} the characteristics do not meet the boundary $r=r_*$, we conclude that 
$$
U(t_1, r_1) 
= \min_{u_1 \in (-1,1)} \Bigg( 
 U(t_0, x_0) + \int_{t_0}^{t_1} F(t) \, dt 
 \Bigg), 
$$
in which the (interior) flux $F(t)= F(t; t_1, r_1, u_1)$ is
$$
2 \, F(t) :=  \Big(1 - {2M \over \chi(t)} \Big)^{-1}    \big( \varphi(t)^2 + 1 \big) - 1. 
$$
Obviously, when the mass $M$ vanishes, this is nothing by the Hopf--Lax--Oleinik's expression for Burgers equation. This completes the description of our explicit formula whenever the minimizing characteristic would not meet the boundary. 

Before we proceed further, let us observe that, in the expression of $F(t)$, we can eliminate the state value $\varphi(t)$ given by 
\be
\varphi(t)^2 = 
1- (1- u_1^2) {1 - {2M \over \chi(t)} \over 1 - {2M \over r_1}}. 
\ee
Indeed, we find 
$$
\aligned
F(t) 
&= - {1 \over 2} + \Big(1 - {2M \over \chi(t)} \Big)^{-1} \Bigg(
           1 - {1 \over 2} (1- u_1^2) {1 - {2M \over \chi(t)} \over 1 - {2M \over r_1}}
\Bigg) 
\\
&=  {1 \over 2} \Bigg( {u_1^2 - 1 \over 1 - {2M \over r_1}} +1 \Bigg) 
   + {2M \over \chi(t) - 2M} =: F(\chi(t), r_1, u_1) 
\endaligned
$$
and, since the first term is a constant in $t$, 
\be
\label{eq:variational-principle-for-Schw-def-of-F}
 \int_{t_0}^{t_1} F(\chi(t)) \, dt 
= {(t_1 - t_0) \over 2} \Bigg( {u_1^2 - 1 \over 1 - {2M \over r_1}} +1 \Bigg) 
   +  2M \int_{t_0}^{t_1} {dt \over \chi(t) - 2M}.
\ee
The first term above depends explicitely on the velocity $u_1$ at the reference point $(t_1, r_1)$ and, in the limit of vanishing mass, approaches the integrated flux $(t_1-t_0)u_1^2/2$ arising in the standard formula.
On the other hand, the second term requires a non-trivial integration along the backward characteristic from $(t_1, r_1)$ and, in the limit of vanishing mass, tends to zero. 


2. We now take into account the possibility that the characteristic would hit the boundary, and by following the methodology in Joseph \cite{Joseph}, we arrive at the following formula 
\be
\label{eq:variational-principle-for-Schw}
U(t_1, r_1) 
= \min_{\chi}
\Bigg( 
 U_0(\chi(t_0)) + \int_{t_0}^{t_1} F(\chi(t)) \, \ONE_{\chi(t) \in (r_*, +\infty)} \, dt 
- \int_{t_0}^{t_1} f_*(t) \, \ONE_{\chi(t)=r_*} \, dt
 \Bigg), 
\ee
in which: 

\bei 

\item $U(t,r) := - \int_r^{+\infty} \Big(1 - {2M \over r'} \Big)^{-2} u(t,r') \, dr'$. 

\item $U_0$ is associated with the data $u_0: [r_*, +\infty) \to (-1, 1)$ prescribed at the initial time $t_0$.

\item $f_*(t)$ is the boundary flux determined from the positive part of the boundary data $u_*: [t_0, +\infty) \to (-1, 1)$ prescribed at $r_*$ 
\bel{eq-fluxstar}
2 \, f_*(t) :=  
 \Big(1 - {2M \over r_*} \Big)^{-1}    \Big( u_*^+(t)^2 - 1 \Big) + 1. 
\ee

\item 
The infimum is taken over all Lipschitz continuous curves $\chi=\chi(t)=\chi(t; t_1, r_1, u_1)$ satisfying 
$$
\chi(t_1)  = r_1
$$
and consisting of finitely many parts parametrized by time intervals 
$[t_0, t_1] = \bigcup_{i=1}^{2N+1} I_i$,  
within which 

\bei 

\item either the path coincides with the boundary (for $t \in I_{2p}$ and $p \leq N$): 
\be
\chi(t) = r_*, 
\ee

\item 
or else the path solves the characteristic equation (for $t \in I_{2p+1}$ and $p \leq N$): 
\be
\aligned
\chi'(t)  
& = \pm \Big( 1 - {2M \over \chi(t)} \Big) \sqrt{1 -  {1 - {2M \over \chi(t)} \over 1 - {2M \over r_1}} (1- u_\sharp^2)}
\endaligned
\ee
for some $u_\sharp \in (-1,1)$ depending on $p$, and is ``maximal'' in the sense that its endpoints on the interval $I_{2p+1}$ belong to the lines $\big\{t=t_0\big\}$  or $\big\{t=t_1\big\}$ or $\big\{r=r_*\big\}$.  

\eei 
\item 
Moreover, the flux $F$ is given by 
\bel{eq:flux-on-characteristic}
F(\chi(t)) =  {1 \over 2} \Bigg( {u_1^2 - 1 \over 1 - {2M \over r_1}} +1 \Bigg) 
   + {2M \over \chi(t) - 2M}.
\ee
\eei 


\subsection{The initial and boundary value problem}
 
In order to handle the above explicit formula, we need to control the behavior of the solutions of the ODE for $\chi(t)$. One should treat with care the integral term containing $1/(\chi(t) - 2M)$ which tends to infinity near the horizon of the black hole. The following observations are in order: 

\bei 

\item 
Recall that the characteristics have two possible behaviors and may either converge to the horizon or converge to space infinity, depending whether the initial velocity is below or above the escape velocity function $\sqrt{2M/r}$. 

\item Hence, we may need more than three pieces for the fluid paths, and the optimal path may contain countably many pieces, each being a part of a characteristic curve or a part of the boundary.  

\item Clearly, in the limit of vanishing mass $M \to 0$, our formula reduces to the one known for Burgers equation with a boundary \cite{Joseph}. 

\eei

\noindent We refer to Figures~\ref{Fig2} and \ref{Fig3} for two plots of the characteristics emanating from a point and the velocity achieved along these curves. 

\begin{theorem}[Existence theory for the Burgers--Schwarzschild equation]
\label{theo31}
 Given a radius $r_* >2M$ and an initial data\footnote{Clearly, our assumption $u_0 \in L^1([r_*, +\infty))$ is equivalent to $U_0 \in L^\infty([r_*, +\infty))$, since the factor $1-2m/r$ remains bounded near $r_*$ and tends to $1$ at infinity.}
 $u_0 \in L^1([r_*, +\infty))$ prescribed at some time $t=t_0 \in \RR$ and any boundary data $u_*:[t_0, +\infty) \to (-1,1)$ with flux $f_* \in L^1_\loc([t_0, +\infty))$ (cf. the definition \eqref{eq-fluxstar}), the formula \eqref{eq:variational-principle-for-Schw}--\eqref{eq:flux-on-characteristic} determines a bounded variation solution to the Burgers--Schwarzschild equation, which is defined globally for all $t \geq t_0$ and $r>r_*$.
\end{theorem}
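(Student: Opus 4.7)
The plan is to follow the classical Hopf--Lax--Oleinik strategy, adapted to the Schwarzschild geometry and to the half-line $(r_*,+\infty)$ with a boundary term \`a la Joseph~\cite{Joseph}, in four stages.

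First, I would show that the variational problem \eqref{eq:variational-principle-for-Schw} is well posed. For each target point $(t_1,r_1)$, the admissible paths are Lipschitz curves that stay in $[r_*,+\infty)$ and are finite or at most countable concatenations of characteristic arcs and segments along $r=r_*$. Because every admissible path satisfies $\chi(t)\geq r_*$, the potentially singular contribution $2M/(\chi(t)-2M)$ in \eqref{eq:flux-on-characteristic} is uniformly bounded by $2M/(r_*-2M)$. Combined with $u_0\in L^1([r_*,+\infty))$ (equivalent to $U_0\in L^\infty$) and $f_*\in L^1_{\loc}([t_0,+\infty))$, this makes the functional finite and bounded below. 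The bound $|\chi'|\le 1-2M/\chi\le 1$ yields uniform equicontinuity, so Arzel\`a--Ascoli together with lower semicontinuity of the functional delivers a minimizing path on every compact time interval.

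Second, I would analyse the regularity of the value function $U(t,r)$ defined as this minimum. Standard dynamic programming and path-perturbation arguments show that $U$ is locally Lipschitz in $(t,r)$ and semiconcave in $r$, because the flux \eqref{eq:flux-on-characteristic} is convex in the endpoint velocity~$u_1$. Setting
\[
u(t,r) := \Bigl(1-\frac{2M}{r}\Bigr)^2 \partial_r U(t,r),
\]
the semiconcavity of $U$ gives a one-sided Lipschitz bound on $r \mapsto u(t,r)$, so $u(t,\cdot)$ has locally bounded variation on $(r_*,+\infty)$. The range $u_\sharp\in(-1,1)$ for the parameter of each characteristic arc propagates to $|u|\le 1$ almost everywhere.

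Third, I would verify that $u$ is a weak entropy solution of \eqref{eq:Bmodel}, or equivalently via \eqref{Conservation} that $U$ solves the Hamilton--Jacobi equation obtained by taking equality in \eqref{eq:339} when $w=u$. Dynamic programming yields this in the viscosity sense, differentiation in $r$ then recovers \eqref{Conservation} distributionally, and the one-sided Lipschitz bound encodes the Lax entropy condition at shocks (which appear exactly where several minimizers coexist). The initial datum is recovered as $t_1\downarrow t_0$ since admissible paths collapse to the point $(t_0,r_1)$ by the light-cone bound $|\chi'|\le 1$. The boundary condition at $r=r_*$ is interpreted in the Bardos--LeRoux--N\'ed\'elec sense~\cite{Bardos-leRoux-Nedelec:MR542510}, and the penalty term $-\int f_*\,\ONE_{\chi=r_*}\,dt$ in \eqref{eq:variational-principle-for-Schw} is precisely what is needed to enforce it.

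The main obstacle I anticipate is the combinatorial structure of optimal paths combined with the curved-geometry flux. In the flat Burgers case, characteristics are straight lines and optimal paths switch between the free and boundary phases at most once, whereas here characteristic curves bend and a priori admit countably many switches. The key technical step is therefore to exclude accumulation of switching times on any compact interval, which I would do by exploiting the conserved quantity \eqref{eq:integral-of-motion} to parametrise characteristic arcs and to show that each free arc consumes a positive amount of time bounded below in terms of $r_*-2M$ and of $\|u_*\|_\infty<1$. Once this is in place, passage to the limit in minimizing sequences and the identification of $u$ as an entropy BV solution globally defined for all $t\ge t_0$ and $r>r_*$ follow along standard lines.
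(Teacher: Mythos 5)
There is a genuine gap, and it sits exactly where the paper locates the whole difficulty. The paper's proof of Theorem~\ref{theo31} consists of a single point: showing that the minimum in \eqref{eq:variational-principle-for-Schw} is attained at velocities \emph{strictly} inside $(-1,1)$, i.e.\ that minimizers do not degenerate onto the light cone. This is done by a first-variation computation at $u_1=-1$: one differentiates $U_0(\chi(t_0))+\int_{t_0}^{t_1}F(\chi(t))\,dt$ in $u_1$, shows via the linearized characteristic equation that $\xi(t)=\partial\chi/\partial u_1\big|_{u_1=-1}$ is positive for $t<t_1$ (since $\xi(t_1)=0$ and the linear ODE integrates to an explicitly positive expression), and concludes that the one-sided derivative of the action at $u_1=-1$ is negative, so the minimum is achieved at $u_1>-1$; the case $u_1\to+1$ is symmetric. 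Your direct-method plan never addresses this. Arzel\`a--Ascoli plus lower semicontinuity can only produce a minimizer in the \emph{closed} class, which includes light-like arcs: the action stays finite as $u_\sharp\to\pm1$, because the kinetic part of \eqref{eq:flux-on-characteristic} tends to the finite value $\tfrac12$ and, as you correctly note, $2M/(\chi-2M)$ is bounded by $2M/(r_*-2M)$ on $[r_*,+\infty)$. Your sentence that the range $u_\sharp\in(-1,1)$ ``propagates to $|u|\le 1$ almost everywhere'' concedes precisely the problem: the theorem requires $u\in(-1,1)$, and without an argument excluding light-like minimizers the formula could return $u=\pm1$ and the construction fails. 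This exclusion is the entire content of the paper's proof and is missing from yours.

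Your self-identified ``key technical step'' would also fail as stated. You propose to rule out accumulation of switching times by showing each free arc consumes a time bounded below in terms of $r_*-2M$ and $\|u_*\|_\infty<1$. By the paper's own return-time lemma (inequality \eqref{eq:timeofreturn}), the lower bound $4Mu_0(1-u_0^2)/((u_0^E)^2-u_0^2)$ on the duration of an excursion degenerates as the departure velocity $u_0\downarrow 0$, and arcs leaving $r_*$ with small positive velocity indeed return in time of order $u_0$: there is no uniform positive lower bound per arc. Moreover, $\|u_*\|_\infty<1$ is not among the hypotheses (only pointwise values $u_*(t)\in(-1,1)$ with $f_*\in L^1_\loc$ from \eqref{eq-fluxstar} are assumed), and the paper explicitly observes that the optimal path may contain countably many pieces, so accumulation of switches is neither excluded nor needs to be. A secondary soft spot in the same stage: lower semicontinuity of the boundary term $-\int f_*(t)\,\ONE_{\chi(t)=r_*}\,dt$ under uniform convergence of paths is delicate, since $f_*$ has no definite sign (it is negative when $u_*^+$ is small, as $(1-2M/r_*)^{-1}>1$) and the contact set $\{\chi=r_*\}$ is unstable in the limit; the Joseph-type treatment the paper relies on exploits the explicit arc-plus-boundary structure of minimizers rather than an abstract direct method over all Lipschitz paths.
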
 

\begin{proof}
It only remains to prove that the formula above makes sense, that is, to prove that for every base point $(t_1, r_1)$ with $t_1 > t_0$ and $r_1 > r_*$, there exists a (possibly non-unique) minimizer having velocities $\in (-1,1)$. In order to show that this, we consider the limit of large negative velocities, the argument for large positive velocities being similar. So, we assume that $u_1 \to -1$, for which the characteristic curve is given for $t_0 \leq t \leq t_1$ by  
\be
e^{\chi^\sharp(t) - r_1} \Bigg({ \chi^\sharp(t) - 2M \over r_1 - 2M} \Bigg)^{2M} = e^{-(t-t_1)}  \qquad \text{ in the limit } u_1=-1, 
\ee
so that the flux converges to  
$$
F(\chi^\sharp(t)) =  {2M \over \chi^\sharp(t) - 2M} \qquad \text{ in the limit } u_1=-1.
$$
We now analyze the variation of $F$ near this value and show that 
$$
{\del \over \del u_1} \Big( U_0(\chi(t_0)) + \int_{t_0}^{t_1} F(\chi(t)) \, dt \Big) |_{u_1=-1} < 0, 
$$
so that the minimum can only be achieved for velocities {\sl strictly bigger} than $-1$. Namely, with $r_0 = \chi(t_0)$, we have 
\bel{eq:each1}
{\del \over \del u_1} \Big( U_0(\chi(t_0)) \Big) = \Big(1 - {2M \over r_0} \Big)^{-2} u_0(r_0) {\del r_0 \over \del u_1}, 
\ee
while 
\bel{eq:each2}
\aligned
{\del \over \del u_1} \Big( \int_{t_0}^{t_1} F(\chi(t)) \, dt \Big)
& =
\int_{t_0}^{t_1} {\del \over \del u_1} 
\Bigg( 
{1 \over 2} \Bigg( {u_1^2 - 1 \over 1 - {2M \over r_1}} +1 \Bigg) 
   + {2M \over \chi(t) - 2M}
\Bigg) \, dt
\\
& =
\int_{t_0}^{t_1} 
\Bigg( {u_1 \over 1 - {2M \over r_1}} 
   - {2M \over (\chi(t) - 2M)^2} {\del \chi(t) \over \del u_1} 
\Bigg) \, dt. 
\endaligned
\ee

On the other hand, from the differential equation defining $\chi(t)$ we find for the derivative $\xi^\sharp(t) := {\del \over \del u_1} \chi(t)|_{u_1=-1}$: 
$$
\xi'(t)  = - {2M \over \chi^\sharp(t)^2} \xi(t)
            - {\Big( 1 - {2M \over \chi^\sharp(t)} \Big)^2 \over 1 - {2M \over r_1}}.
$$
Since $\xi(t_1)=0$, we obtain 
$$
\xi(t)  = 
            \int ^{t_1}_t e^{\int_t^{t'} {2M \over \chi^\sharp(t)^2}  dt} 
{\Big( 1 - {2M \over \chi^\sharp(t')} \Big)^2 \over 1 - {2M \over r_1}} dt' >0. 
$$
Therefore, each term in \eqref{eq:each1} and \eqref{eq:each2} is negative and we have the desired conclusion, that is, the existence of a minimizer with velocities $\in (-1,1)$. 
\end{proof}

\

\begin{figure}[htbp]
\begin{centering}
\includegraphics[width=8cm,height=8cm,angle=0]{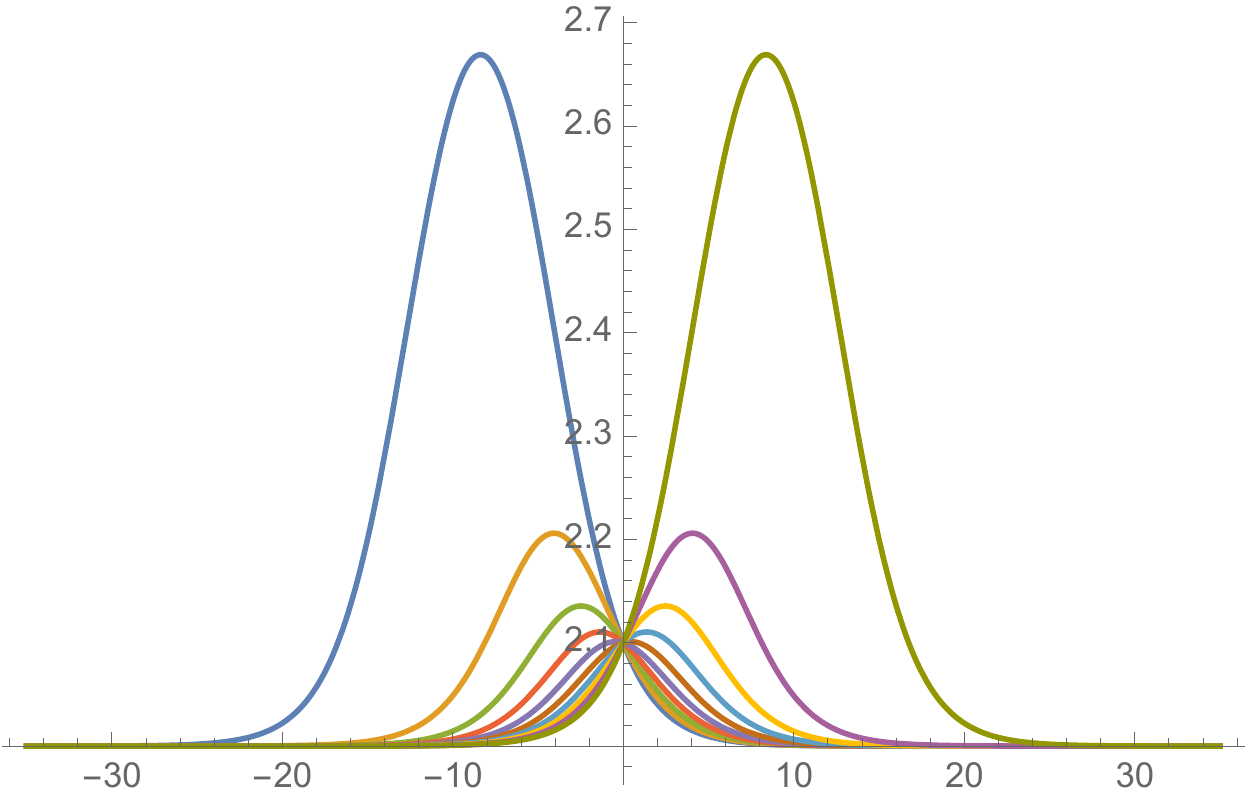} 
\includegraphics[width=8cm,height=8cm,angle=0]{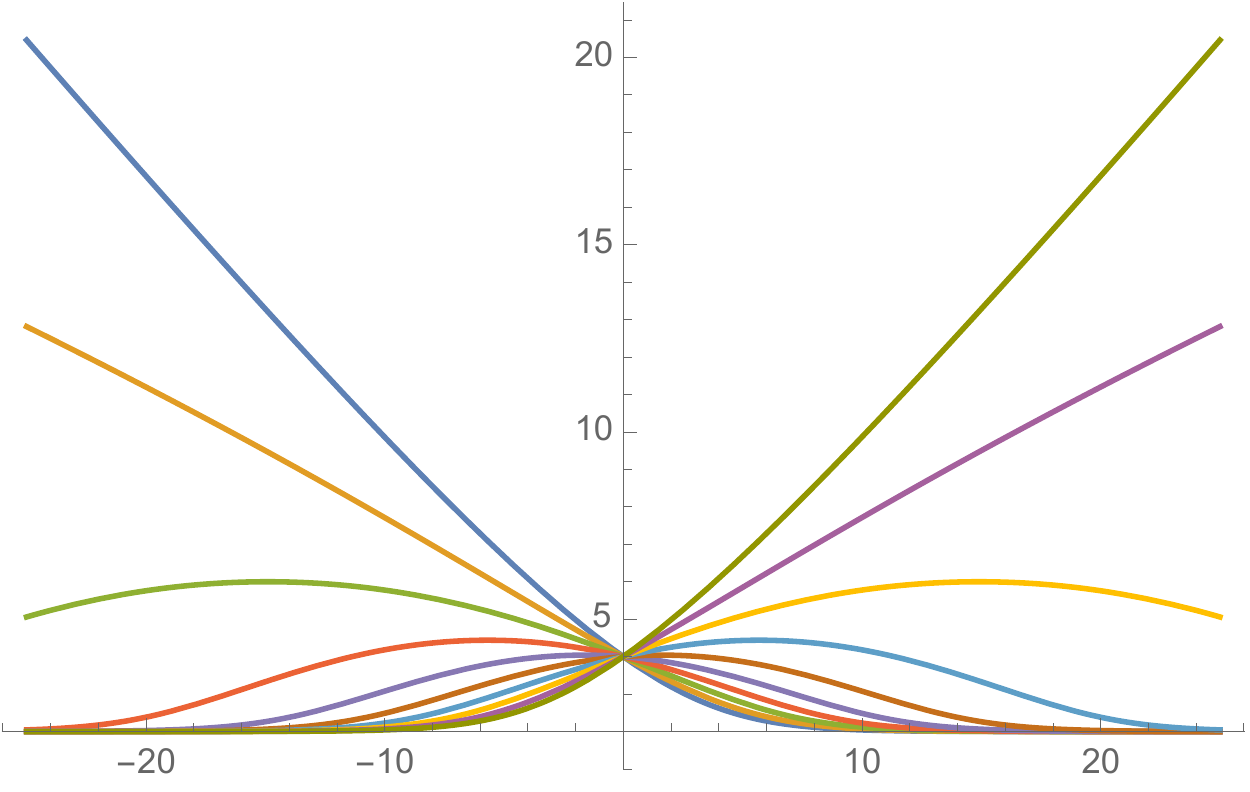} 
\par\end{centering}
\caption{Two plots of the future and past characteristic curves emanating from a point $r_0$}
\label{Fig2} 
\end{figure}

\begin{figure}[htbp]
\begin{centering}
\includegraphics[width=8cm,height=8cm,angle=0]{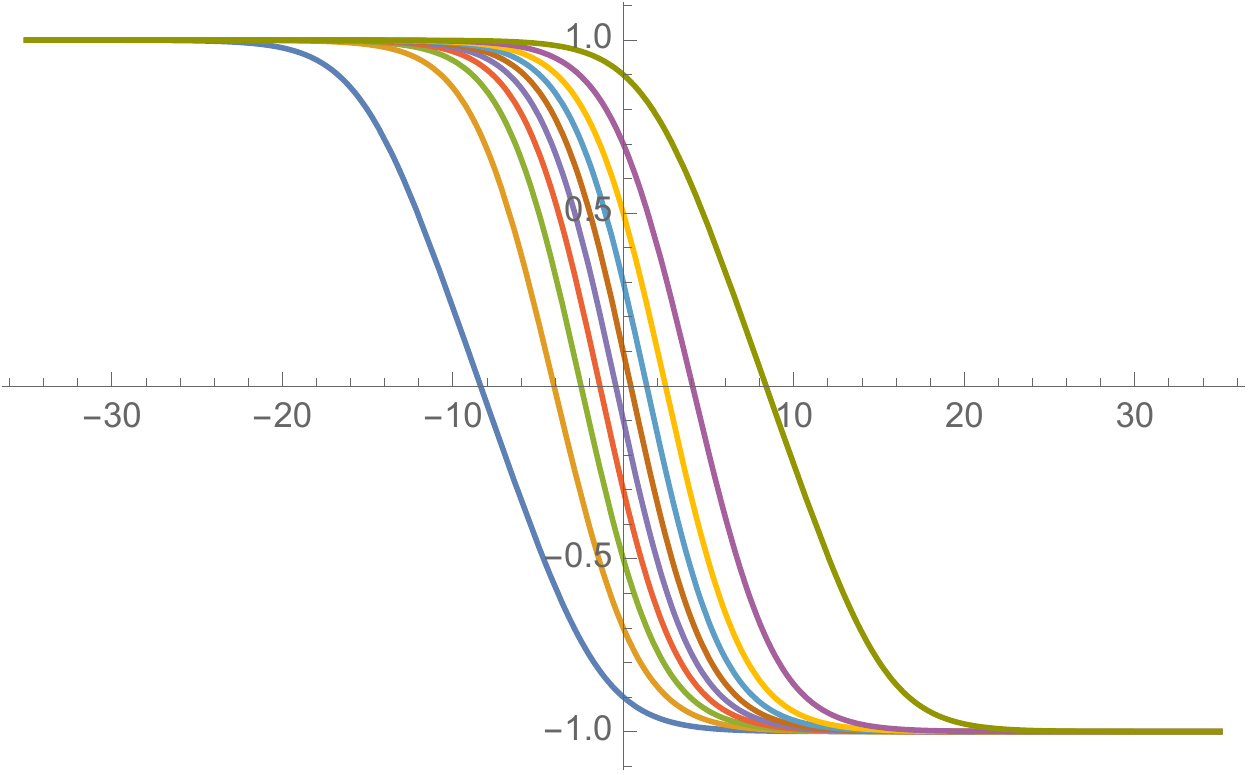} 
\includegraphics[width=8cm,height=8cm,angle=0]{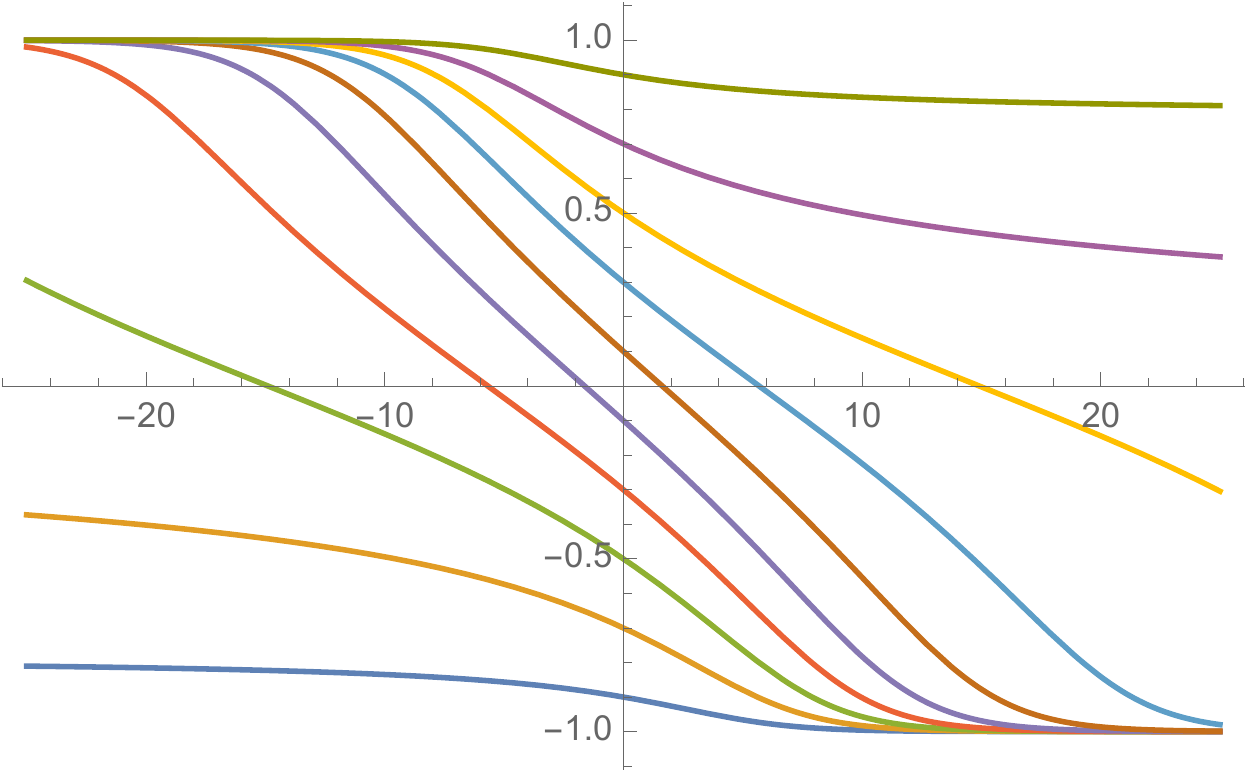} 
\par\end{centering}
\caption{Two plots of the velocity along future and past characteristic curves emanating from a point $r_0$}
\label{Fig3} 
\end{figure}


\subsection{Existence theory for the Euler--Schwarzschild model} 
\label{sec35}

We now rely on the explicit formula from the previous section and analyze the global existence problem for the full model of interest. The following issues remain to be discussed: 

\bei 

\item introducing a notion of weak solution for the integrated density equation, 

\item solving the density equation, and 

\item dealing with initial data for which the velocity field need not be integrable at infinity. 

\eei 

\noindent Following LeFloch \cite{LeFloch90} (who treated the case $M=0$) and relying on Volpert's notion of superposition \cite{VOLP}, we introduce the following concept of weak solution. Given two functions of locally bounded variation in space, say $u=u(t,r)$ and $v=v(t,r)$, we define the product $\uhat \del_r v$ 
by selecting a pointwise representative $\uhat$ of the function $u$, that is, 
\be
\uhat (t,r) = \begin{cases}
u(t,r) \quad & \text{ if $r$ is a point of continuity of $u(t, \cdot)$}, 
\\
{1 \over 2} \big( u_-(t,r) + u_+(t,r) \big), \quad & \text{ if $r$ is a jump point with traces $u_\pm(t, r)$}.
\end{cases} 
\ee

\begin{definition} Given $M>0$ and some radius $r_* \in (2M, +\infty)$, a {\em weak solution to Euler--Schwarzschild model} is a pair of functions $u=u(t,r) \in (-1, 1)$ and $v=v(t,r) \geq 0$ with locally bounded variation in space, defined for all $t \geq 0$ and $r >r_*$ and satisfying 
\be
\label{eq:Eu-Schw}
\aligned
& \del_t u + \Big( 1 - {2M \over r} \Big) \del_r \big( {u^2 \over 2} \Big) 
= {M \over r^2} ( u^2 - 1), 
\\
&\del_t v + \big( 1- {2M \over r} \big) \uhat \del_r v = 0. 
\endaligned
\ee 
\end{definition}

Based on our analysis in the previous section, we now arrive at our main existence results.  

\begin{definition} A function $u_0=u_0(r)$ defined for all $r>r_*$ is said to have the {\sl asymptotic velocity} $u_\infty^\sharp \in (-1, 1)$ if  the function $r \mapsto (u_0(r) - u_\infty^\sharp)$ is integrable at infinity.  
\end{definition}

\begin{theorem}[Existence theory for Euler--Schwarzschild system --- data with prescribed asymptotic velocity] Fix a black hole mass $M>0$, a time $t_0 \in \RR$, and a radius $r_* \in (2M, +\infty)$. Given any initial velocity $u_0: [r_*, +\infty) \mapsto (-1, 1)$ with asymptotic velocity $u_\infty^\sharp \in (-1, 1)$ and any initial integrated density $v_0: [r_*, +\infty) \mapsto [0, +\infty)$ with locally bounded variation,
and given any boundary velocity $u_*: [t_0, +\infty) \mapsto (-1,1)$  with flux $f_* \in L^1_\loc([t_0, +\infty))$  
and any boundary integrated density $v_*: [t_0, +\infty) \mapsto [0, +\infty)$ with locally bounded variation, 
the initial value problem associated with the Euler--Schwarzschild model \eqref{eq:ES} admits a global solution  $u=u(t,r) \in (-1, 1)$ and $v=v(t,r) \geq 0$ defined for all $t \geq t_0$ and $r > r_*$, which have locally bounded variation in space for $t >t_0$ and such that the component $u$ has asymptotic velocity $u_\infty^\sharp$ for all times $t \geq t_0$. 
\end{theorem}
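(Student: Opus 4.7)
The proof proceeds in two parts: first extend Theorem~\ref{theo31} to accommodate initial velocities that approach a nonzero asymptote $u_\infty^\sharp$ at infinity, and then construct the integrated density $v$ by transporting the prescribed initial/boundary values along generalized characteristics of the BV velocity field. The velocity construction rests on a spatial truncation argument combined with the finite speed of propagation; the density construction uses Volpert's notion of nonlinear product for BV functions, following LeFloch \cite{LeFloch90,VOLP}.

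\textbf{Construction of $u$.} For each integer $n$ let $\phi_n : [r_*,\infty) \to [0,1]$ be a smooth cutoff equal to $1$ on $[r_*, n]$ and vanishing past $n+1$, and set $u_0^{(n)} := u_0 \phi_n$. Since $u_0$ is bounded by $1$ and $\phi_n$ has compact support, $u_0^{(n)} \in L^1([r_*,\infty))$, and Theorem~\ref{theo31} produces a BV solution $u^{(n)}$ for the data $(u_0^{(n)}, u_*)$. Because $|u| < 1$, the characteristic velocity $(1-2M/r)u$ is strictly less than $1$ in magnitude, so information propagates at speed less than $1$. For any compact region $K \subset [t_0,T] \times (r_*, R]$ and any $n > R + (T-t_0) + 1$, every candidate characteristic in the variational formula~\eqref{eq:variational-principle-for-Schw} for a point of $K$ remains inside $[r_*, n]$, where $u_0^{(n)} = u_0$. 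By the locality of the minimization, $u^{(n)}|_K$ is independent of $n$ for all sufficiently large $n$; we define $u$ globally by $u|_K := \lim_{n\to\infty} u^{(n)}|_K$ and exhaust $[t_0,\infty) \times (r_*,\infty)$ by such $K$. The BV regularity in space, together with the interpretation as a weak solution of~\eqref{eq:Bmodel}, is inherited from the BV regularity of each $u^{(n)}$.

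\textbf{Preservation of the asymptotic velocity.} To verify that $u(t,\cdot) - u_\infty^\sharp$ remains integrable at infinity for each $t \geq t_0$, observe that for fixed $t$ the backward characteristics from $(t,r)$ reach the initial hypersurface at some $r_0$ with $r_0 \geq r - (t-t_0)$, hence $r_0 \to \infty$ as $r \to \infty$. Along each such characteristic the invariant $(1-u^2)/(1-2M/r)$ from~\eqref{eq:integral-of-motion} takes the value $(1-u_0(r_0)^2)/(1-2M/r_0)$, which tends to $1 - (u_\infty^\sharp)^2$ by the assumed asymptotic behavior of $u_0$. Combined with the structure of characteristics described in Section~\ref{sec3}, this yields $u(t,r) \to u_\infty^\sharp$ as $r \to \infty$; a quantitative estimate stemming from the variational formula then transfers the $L^1$ control of $u_0 - u_\infty^\sharp$ to $u(t,\cdot) - u_\infty^\sharp$ uniformly on compact time intervals.

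\textbf{Construction of $v$.} With $u$ in hand, interpret the transport equation $\partial_t v + (1 - 2M/r)\,\hat{u}\,\partial_r v = 0$ in the Volpert sense, as in \cite{LeFloch90,VOLP}. From any point $(t,r)$ with $t > t_0$ and $r > r_*$, follow a generalized backward characteristic of the vector field $(1-2M/\cdot)\hat{u}$ until it meets either the initial hypersurface $\{t=t_0\}$ at some $r_0 > r_*$, or the boundary $\{r=r_*\}$ at some $t_* \in [t_0,t)$, and define $v(t,r) := v_0(r_0)$ or $v(t,r) := v_*(t_*)$ accordingly. Because backward characteristics of a BV field form a monotone family (possibly bifurcating along shock lines of $u$), $v$ inherits local BV regularity in $r$ from $v_0$ and $v_*$, and the jump of $v$ across a shock of $u$ is consistent with the Volpert product $\hat{u}\,\partial_r v$.

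\textbf{Main obstacle.} The central technical difficulty is to rigorously verify that the $v$ built by backward characteristics satisfies the transport equation as a genuine Volpert product across shock curves of $u$, particularly in the curved geometry where characteristics are not straight lines. The framework of \cite{LeFloch90} for the flat case $M=0$ provides the template, but must be adapted to incorporate the weight $(1-2M/r)$ and the nontrivial characteristic structure established in Section~\ref{sec3}, including the possibility of characteristics asymptoting to the horizon or to spacelike infinity. A secondary subtlety is the uniform propagation of the asymptotic condition $u(t,r) \to u_\infty^\sharp$ on compact time intervals, which again reduces to the global behavior of characteristics analyzed earlier.
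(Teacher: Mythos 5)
Your proposal reaches the same two conclusions by a genuinely different route for the velocity component. The paper never truncates: it keeps a single global variational formula by renormalizing the potential, replacing \eqref{eq:defU} with the definition \eqref{eq:defU2}, $U(t,r) = -\int_r^{+\infty}(1-2M/r')^{-2}\big(u(t,r')-u^\sharp(r')\big)\,dr'$, where $u^\sharp$ is the static profile associated with $u_\infty^\sharp$; since the differential inequalities of Section~\ref{sec3} are insensitive to this subtraction, the formula \eqref{eq:variational-principle-for-Schw} carries over verbatim and the asymptotics are encoded in the construction itself. Your exhaustion-by-truncation argument is a legitimate alternative, and the finite-speed mechanism you invoke is sound: every admissible path in \eqref{eq:variational-principle-for-Schw} has $|\chi'|<1$, so the backward cone from a compact $K\subset[t_0,T]\times(r_*,R]$ stays in $[r_*,\,R+T-t_0]$. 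One point you should make explicit, though: truncating $u_0$ beyond radius $n$ does \emph{not} leave $U_0$ unchanged on $[r_*,n]$ --- it shifts it there by an additive constant --- and it is only because constant shifts do not affect minimizers that ``locality of the minimization'' is literally true and $u^{(n)}|_K$ stabilizes. For the density, your construction is essentially the paper's: the paper sets $v(t,r)=v_0(\chi(t_0;t,r))$ if the backward \emph{minimizing} path reaches the initial line and $v(t,r)=v_*(s_*+)$ if it leaves the boundary at time $s_*$, explicitly fixing the right-continuous representative at discontinuities of $v_*$ --- a choice you should also fix; using the minimizing path rather than a generic Filippov characteristic of $(1-2M/\cdot)\hat u$ is cleaner, since it is canonically defined off an at most countable shock set, and the paper does not carry out the detailed Volpert-product verification you list as the main obstacle either.

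The one genuine soft spot in your argument is the preservation of the asymptotic velocity. The theorem asserts, per the paper's definition, that $r\mapsto u(t,r)-u_\infty^\sharp$ is \emph{integrable} at infinity for every $t\ge t_0$, not merely that $u(t,r)\to u_\infty^\sharp$ pointwise; your characteristic-invariant argument delivers only the pointwise limit, and the $L^1$ tail control is asserted (``a quantitative estimate stemming from the variational formula'') rather than proved. To close it in your framework one would use the invariant \eqref{eq:integral-of-motion}: along a noninteracting backward characteristic from $(t_0,r_0)$ to $(t,r)$ one has $u(t,r)^2-u_0(r_0)^2=(1-C)\,2M(1/r-1/r_0)=O(1/r^2)$ since $|r-r_0|\le t-t_0$, and shocks only compress the domain of dependence; but turning $u^2$-control into $L^1$ control of $u-u_\infty^\sharp$ requires care when $u_\infty^\sharp=0$ and under rarefaction spreading of the map $r\mapsto r_0(r)$. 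Be aware that the paper's own proof is terse on exactly this point as well --- indeed $u^\sharp(r)-u_\infty^\sharp\sim cM/r$ is itself non-integrable for $u_\infty^\sharp\neq 0$, so even the renormalized integral \eqref{eq:defU2} implicitly interprets the asymptotic condition relative to the static profile $u^\sharp$ rather than the constant $u_\infty^\sharp$ --- so both routes face the same tail-integrability subtlety; supplying the estimate above would make your version complete.
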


\begin{proof} 
Observe that in the above theorem the initial data is not assumed to be integrable, so that our previous definition 
\eqref{eq:defU} need not make sense. However, we can easily redefine $U$ by substracting the static solution  $u^\sharp=  u^\sharp(r)$ associated with its asymptotic velocity $u^\sharp_\infty$, that is, 
\bel{eq:defU2}
U(t,r) = - \int_r^{+\infty} \Big(1 - {2M \over r'} \Big)^{-2} \big( u(t,r') - u^\sharp(r') \big)  \, dr', \qquad r \geq r_*.
\ee  
Then, by repeating the same calculation as in the previous section, we obtain exactly the same inequalities and therefore the same explicit formula but in terms of the new function \eqref{eq:defU2}. 

Once the solution component $u=u(t,r)$ is determined by the explicit formula, we obtain the solution component $v=v(t,r)$ by setting
\be
v(t,r) = \begin{cases} 
v_0(\chi(0; t, r)), \quad \text{ if the optimal path reaches the initial line,}
\\
v_*(s_* +), \quad \text{ if the optimal path reaches the boundary at some time $s_*$,}
\end{cases} 
\ee
in which $s \mapsto \chi(s; t,r)$ denotes the backward minimizing path from $(t,r)$. Observe that the path may reach a point of discontinuity of the initial or boundary data and, for definiteness, we select the right-hand representative of the BV function $v_*$.  
\end{proof}

Finally, we can cover initial data for which no restriction is imposed at infinity. This is achieved by redefining again the function $U$ by integrating {\em from the boundary,} i.e. 
\bel{eq:defU3}
U(t,r) = \int_{r_*}^r \Big(1 - {2M \over r'} \Big)^{-2} u(t,r') \, dr', \qquad r \geq r_*. 
\ee  
This function significantly differs from the original definition by a {\em flux term,} which is the boundary flux actually achieved by the solution and need not coincide with the prescribed boundary data. The boundary flux achieved by a solution is not a priori known and we refer to LeFloch \cite{MR949657} for a characterization of this term. However, we do not need it here in order to express the minimization property of interest, and it is not difficult to repeat our previous calculations and re-derive all of our inequalities by including this term throughout the inequalities. We arrive at the following conclusion.

\begin{theorem}[Existence theory Euler--Schwarzschild system --- non-integrable data]
Fix a black hole mass $M>0$, a time $t_0 \in \RR$, and a radius $r_* \in (2M, +\infty)$. Given with any initial velocity $u_0: [r_*, +\infty) \mapsto (-1, 1)$ in $L^1_\loc$
 and any initial integrated density $v_0: [r_*, +\infty) \mapsto  [0, +\infty)$ 
with locally bounded variation, 
and given any boundary velocity $u_*: [t_0, +\infty) \mapsto (-1,1)$  with flux $f_* \in L^1_\loc([t_0, +\infty))$  
and any boundary integrated density $v_*: [t_0, +\infty) \mapsto [0, +\infty)$ with locally bounded variation, 
the initial value problem associated with the Euler--Schwarzschild model admits a global solution 
admits a global solution  $u=u(t,r) \in (-1, 1)$ and $v=v(t,r) \geq 0$ defined for all $t \geq t_0$ and $r > r_*$, which have locally bounded variation in space for $t >t_0$ and such that the component $u$ has asymptotic velocity $u_\infty^\sharp$ for all times $t \geq t_0$. 
\end{theorem}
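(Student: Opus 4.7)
The strategy is to adapt the proof of the preceding theorem by replacing the potential \eqref{eq:defU2} with the boundary-anchored version \eqref{eq:defU3}, namely $U(t,r) = \int_{r_*}^r (1 - 2M/r')^{-2} u(t,r')\,dr'$. Since $u_0 \in L^1_\loc([r_*,+\infty))$ and $r_* > 2M$, this integral is well-defined on every compact subinterval, so the lack of integrability of $u_0$ at infinity is no obstacle. The first step is to integrate the conservation form \eqref{Conservation} on the finite interval $[r_*,r]$ rather than on $[r,+\infty)$, and then repeat the convex-inequality step \eqref{eq:339}--\eqref{eq:341} with the test state $w = \varphi(t)$ selected along a characteristic $\chi$. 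The essential new feature is that the contribution at $r=r_*$ no longer vanishes but produces an extra term equal to the achieved boundary flux, which is absorbed into the variational formula in exactly the way the prescribed boundary flux $f_*$ was absorbed in \eqref{eq:variational-principle-for-Schw}.

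Next I would re-derive a Hopf--Lax--Oleinik formula of the same structure as \eqref{eq:variational-principle-for-Schw}--\eqref{eq:flux-on-characteristic}, where the initial-datum contribution is now $U_0(\chi(t_0))$ with $U_0(r) = \int_{r_*}^r (1-2M/r')^{-2} u_0(r')\,dr'$, and the boundary-flux term retains the same form along the portions of $\chi$ that ride on $r=r_*$. Existence of a minimizer with $u_1 \in (-1,1)$ at the base point $(t_1,r_1)$ follows by the same one-sided derivative computation as in the proof of Theorem~\ref{theo31}: the additional boundary-flux pieces depend on the global shape of $\chi$ but not explicitly on the terminal velocity $u_1$, so the signs of the derivatives \eqref{eq:each1}--\eqref{eq:each2} near $u_1 = \pm 1$ are unchanged, ruling out extremal velocities. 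Compactness in the class of admissible piecewise characteristic-boundary paths (with uniformly bounded total variation on each compact time interval) then delivers a minimizer, from which $u(t,r) \in (-1,1)$ of locally bounded variation in space is recovered.

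With $u$ determined, the density component $v$ is defined by transport along the backward minimizing path exactly as in the previous theorem: $v(t,r) = v_0(\chi(t_0;t,r))$ if the optimal path reaches the initial line, and $v(t,r) = v_*(s_*+)$ if it reaches $r=r_*$ at some time $s_*$, selecting the right-continuous BV representative. The main obstacle I expect is the book-keeping of the \emph{achieved} versus \emph{prescribed} boundary flux in the new variational formula: in the integrable-at-infinity setting the flux at $+\infty$ was automatically zero, whereas here one must verify, following LeFloch~\cite{MR949657}, that the variational formula correctly encodes the actual boundary trace of $u$ in the sense of \cite{Bardos-leRoux-Nedelec:MR542510}. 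Once that consistency is settled, the remaining verifications --- BV regularity of $u$ and $v$ in space, pointwise agreement with the transport equation for $v$, and the fact that the clause on asymptotic velocity in the statement is vacuous in the regime treated here --- follow directly from the arguments already used.
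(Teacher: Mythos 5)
Your proposal follows essentially the same route as the paper: the paper's own (one-paragraph) argument likewise redefines the potential by integrating from the boundary as in \eqref{eq:defU3}, observes that the resulting extra term is the \emph{achieved} boundary flux --- a function of $t$ alone, characterized in \cite{MR949657} but not needed to express the minimization property, since it contributes identically to every candidate path --- and then repeats the calculations of the previous subsections, reusing the minimizer-existence argument of Theorem~\ref{theo31} and the transport construction of $v$ from the preceding theorem. Your closing remarks (the achieved-versus-prescribed flux bookkeeping via \cite{MR949657}, and the asymptotic-velocity clause being a vestigial leftover in this non-integrable regime) match the paper's treatment, so there is nothing substantive to add.
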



\section{Burgers equation with random forcing on a boundary} 
\label{sec5}

\subsection{The Hopf--Lax--Oleinik formula with a boundary}

The framework we are going to describe in this section has common features with \cite{B:MR2299503,B:MR3112935}. We assume here that the black hole mass $M=0$ vanishes, so that our relativistic Burgers equation reduces to the standard Burgers equation.  Hence, letting $M \to 0$ in \eqref{eq:ES}, we thus consider the pressureless Euler system on the half-line $\RR_+=[0,+\infty)$:
\begin{align}
\label{eq:Burgers}
\partial_t u + \partial_x\left(\frac{u^2}{2}\right)&=0,
\\
\label{eq:scalar-in-Burgers-field}
 \partial_t v+u\partial_x v&=0.
\end{align}
Here, $u=u(t,x)$ and $v=v(t,x)$ are functions of time $t\in\RR$ and space variable $x\in\RR_+$. We are going to equip this system with stationary random boundary conditions and 
study the long-term statistical properties of solutions. Since \eqref{eq:Burgers}--\eqref{eq:scalar-in-Burgers-field} is a system of hyperbolic equations, it is standard to work
with entropy solutions.

A unique entropy solution of the initial-boundary value problem associated with the equations \eqref{eq:Burgers}--\eqref{eq:scalar-in-Burgers-field}, initial conditions
\begin{align}
\label{eq:initial-u}
 u(t_0,x)&=u_0(x),\quad x>0,\\
\label{eq:initial-v}
 v(t_0,x)&=v_0(x),\quad x>0,
\end{align}
and boundary conditions
\begin{align}
 \label{eq:boundary-u}
 u(t,0)=\phi(t), \quad t\ge t_0,\\
 \label{eq:boundary-v}
 v(t,0)=\psi(t), \quad t\ge t_0,
\end{align}
is given by the following two-stage procedure. 

1. First of all, we consider the following minimization problem f(or every $t>t_0$ and $x>0$):
\begin{equation}
\label{eq:basic-var-problem-for-Burgers}
U(t,x)=\inf_{\gamma: \gamma(t)=x}\left\{U_0(\gamma(t_0))+\frac{1}{2}\int_{t_0}^t \dot\gamma^2(s)\ONE_{\gamma(s)>0}ds - \frac{1}{2}\int_{t_0}^t \phi_+^2(s)\ONE_{\gamma(s)=0}ds\right\}.
\end{equation}
Here, the infimum is taken over all absolutely continuous curves $\gamma:[t_0,t]\to\RR_+$ satisfying $\gamma(t)=x$, and $U_0$ is defined by
\begin{equation}
\label{eq:initial-condition-potential}
 U_0(y)=\int_0^y u_0(z)dz,\quad y\in\RR_+.
\end{equation}
For $p\in\R\cup\{-\infty\}$, we denote 
\[
\U_p=\left\{u\in L^1_\loc(\R_+):\ \liminf_{x\to + \infty}\frac{\int_0^x u(y)dy}{x} > p\right\}. 
\]
Assuming that $\phi_+$ is a function that is Lebesgue integrable on bounded intervals and that $u_0\in\U=\U_{-\infty}$,
the value $U(t,x)\in\RR$ is well-defined by~\eqref{eq:basic-var-problem-for-Burgers} for each $t>t_0$ and $x>0$. For every fixed $t>t_0$, there is an at most countable set $B(t)\subset\RR_+$ such that for every $x\in\RR_+\setminus B(t)$,
the minimal action $U(t,x)$ is attained at a unique minimizer $\gamma^*_{(t,x)}$. Moreover, for every $x\in\RR_+\setminus B(t)$, the derivative $\partial_x U(t,x)$ is well-defined and equals $\dot\gamma^*_{(t,x)}(t)$. At these points, the unique entropy solution of the Burgers equation is given by 
\be
u(t,x)=\partial_x U(t,x).
\ee
 We also
define the solution at shock points $x\in B(t)$ to be right-continuous by setting $u(t,x)=\lim_{y\downarrow x}u(t,y)$. In fact, it is easy to see that thus defined solutions are c\`adl\`ag (i.e. right-continuous with left limits; we denote the set of all such functions by $\D$), and if $u(t_0,x)\in \U_p$ for some $p$, then so is $u(t,\cdot)$ for all $t>t_0$.

\vskip.3cm

2. In a second stage of the construction, we observe that the action minimizers $\gamma$, in fact, have very simple structure. Let us denote $S=\{s\in[0,t]:\ \gamma(s)=0\}$. If $S=\emptyset$, then $\dot\gamma$ is constant. If $S\ne\emptyset$, then~$S$ is a segment $[a,b]$ for some $a,b\in[t_0,t]$,   
and $\dot\gamma$ is constant on $[t_0,a)$ and $(b,t]$. In the former case, $\gamma$ is a line segment in space-time. In the latter case, $\gamma$ is a broken line (i.e. a piecewise linear path). In both cases, the last straight segment of $\gamma$ is a characteristic of
~\eqref{eq:Burgers}. Given $u$, the characteristics of the equation on $v$ coincide with constant velocity characteristics for~\eqref{eq:Burgers}, so we can define $v(t,x)$ to coincide with $v_0(\gamma(t_0))$ in the first case, and with $\psi(b)$ in the second case.

This procedure defines $u(t,x)$ and $v(t,x)$ uniquely for each $t>t_0$ and $x\in\RR_+\setminus B(t)$. At points of $B(t)$, the solutions have discontinuities.


 \subsection{Ergocity properties for the Burgers equation with a boundary}

We are interested in the situation where the boundary conditions $(\phi(t),\psi(t))_{t\in\RR}$ are random and form a stationary stochastic process. Namely, we will assume that on some probability space $(\Omega,\Fc,\Pp)$ there is an ergodic flow $(\theta^t)_{t\in\RR}$ of
$\Pp$-preserving measurable transformations of $\Omega$, and two random variables $\phi:\Omega\to\RR$ and $\psi:\Omega\to\RR$ such that for all $\omega\in\Omega$, the stationary processes $\phi(t,\omega)=\phi(\theta^t\omega)$ and $\psi(t,\omega)=\psi(\theta^t\omega)$ are bounded on any finite time interval.  We assume that $q^2 := \E \phi_+^2<+\infty$.
Our goal is, for $\Pp$-a.e.\ $\omega$, i.e. for a.e.\ realization of $(\phi,\psi)$,  

\bei 

\item[(i)] to construct a global solution
$(u,v)$ of the system~\eqref{eq:Burgers}--\eqref{eq:scalar-in-Burgers-field} with boundary conditions 
$(\phi(t),\psi(t))$ given for all times $t\in\RR$; 

\item[(ii)] to study some properties of the global solution;

\item[(iii)] and to describe the basin of pullback attraction of the global solution and its uniqueness class.

\eei 

To state our main results on the Burgers equation, we introduce the family of random evolution operators
\be
 \Phi_{\omega}^{t_0,t_1}:\U\to\U\cap \D,\quad \omega\in\Omega,\ t_0<t_1,
\ee
such that $\Phi^{t_0,t_1}_\omega u$ denotes 
 the solution at time $t_1\ge t_0$ of the Cauchy problem for the Burgers equation with initial condition $u\in \U$ at time $t_0$ and boundary condition $\phi=\phi(\cdot,\omega)$. This family of operators $\Phi$ forms a cocycle, i.e. it satisfies the following identity:
\be
 \Phi^{t_0,t_2}_\omega =\Phi^{t_1,t_2}_\omega\circ\Phi^{t_0,t_1}_{\omega},\quad t_0\le t_1\le t_2.
\ee
Equivalently, one could work with the family of operators $\Phi^t_\omega=\Phi^{0,t}_\omega$, and in terms of these operator, the cocycle property reads 
\be
 \Phi^{t_1+t_2}_\omega=\Phi^{t_2}_{\theta^{t_1}\omega}\circ \Phi^{t_1}_\omega,\quad t_1,t_2\ge 0.
\ee
Under our assumptions on the random boundary condition $\phi$, these cocycle identities hold for all~$\omega$, and there is no need to discuss exceptional sets (i.e. this is a perfect cocycle).

It turns out that these families of operators have strong contraction properties that can also be viewed as fast loss of memory in the system. To make this precise, 
similarly to~\cite{B:MR3112935},
we need to introduce a  measure of proximity in the space $\U$. 
For any
$h_1,h_2\in\U$ we denote by $r(h_1,h_2)\in[0,\infty]$ the maximal value $r>0$ such that the restrictions of 
$h_1$ and $h_2$ on $[0,r]$ coincide, and then we define
\begin{equation}
\label{eq:metric-d}
 d(h_1,h_2)=\exp(-r(h_1,h_2)).
\end{equation}
If there is no neighborhood of the origin where $h_1$ and~$h_2$ coincide, we set $d(h_1,h_2)=1$.
If $h_1\equiv h_2$, we set $d(h_1,h_2)=0$.
Thus defined~$d$ is an ultrametric in $\U$ taking values in $[0,1]$. Convergence in this metric simply means that the domain of disagreement of functions ``shrinks to the empty set at infinity''.

\begin{definition}
A $\U$-valued process $u_\omega(t,\cdot)$ (with $t\in\R$) is called a {\sl global solution} if for $\Pp$-a.e.~$\omega$ and all $t_0,t_1\in\R$ satisfying $t_0<t_1$, 
\[
\Phi_\omega^{t_0,t_1}u_\omega(t_0,\cdot)= u_\omega(t_1,\cdot).
\]
\end{definition} 

We are now ready to state our main result on stationary solutions of the Burgers equation with random boundary condition (without taking into account the passive scalar field yet; see Section~\ref{subsec44}).

\begin{theorem}\label{thm:1F1S-Burgers} 
Consider the Burgers equation in a half space $[0, +\infty)$ with random boundary conditions described above.
\begin{enumerate}
\item\label{item:existence} There exists a stationary global solution $u_\omega$ satisfying
$u_\omega(t,\cdot)=u_{\theta^t\omega}(\cdot)$. 
\item\label{item:uniqueness} The global solution $u$ constructed in Part~\ref{item:existence} is unique
in the following sense: let $\tilde u$ be a stationary global solution of the Burgers equation~\eqref{eq:Burgers} with boundary condition~\eqref{eq:boundary-u} such that with probability $1$, $\tilde u(t,\cdot)\in \U_{-q}$ for all $t$. Then, with probability 1, $\tilde u$ coincides with the solution $u$ constructed in Part~\ref{item:existence}.
\item\label{item:pullback}  The solution $u$ plays the role of a one-point random attractor on $\U_{-q}$. Namely, if $w\in\U_{-q}$,
then the following pullback attraction holds with probability~1:
\begin{equation}
\label{eq:pull_attr}
 d(\Phi^{t,0}_{\omega} w, u_\omega)\to 0,\quad t\to-\infty.
\end{equation}

\end{enumerate} 
\end{theorem}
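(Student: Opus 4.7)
I will construct the stationary solution as a pullback limit of the variational principle~\eqref{eq:basic-var-problem-for-Burgers}. Fix a reference $w\in\U_{-q}$ (e.g.\ $w\equiv 0$) and set $u^T_\omega:=\Phi^{-T,0}_\omega w$. The plan is to show that for $\Pp$-a.e.\ $\omega$ the family $\{u^T_\omega\}_{T>0}$ is Cauchy in the ultrametric $d$; define $u_\omega(0,\cdot):=\lim_{T\to\infty} u^T_\omega$ and $u_\omega(t,\cdot):=u_{\theta^t\omega}(0,\cdot)$. Stationarity then follows from the cocycle identity $\Phi^{-T,t}_\omega=\Phi^{-T-t,0}_{\theta^t\omega}$, and parts~\ref{item:uniqueness} and~\ref{item:pullback} are byproducts of the same Cauchy argument, applied with an arbitrary competitor in $\U_{-q}$ in place of $w$.

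\textbf{Key boundary-capture estimate.} The crux is to show: on a full-measure event, for every $x>0$ and every $w\in\U_{-q}$, the backward minimizer $\gamma^*$ in~\eqref{eq:basic-var-problem-for-Burgers} from $(0,x)$ back to time $-T$ must visit the boundary $\{y=0\}$ for all sufficiently large $T$. To establish it, I compare two candidate paths. For a boundary-visiting path $\gamma_B$ that sits on $\{y=0\}$ during $[-T,b]$ for some fixed $b<0$ and then runs linearly to $(0,x)$, Birkhoff's ergodic theorem applied to the stationary process $s\mapsto \phi_+^2(\theta^s\omega)$ (with mean $q^2$) gives
\[
\text{action}(\gamma_B)\;=\;\tfrac{x^2}{2|b|}-\tfrac12\int_{-T}^{b}\phi_+^2(s,\omega)\,ds\;=\;-\tfrac{q^2}{2}\,T+O_\omega(1),\quad T\to\infty.
\]
For any off-boundary straight line $\gamma_S$ ending at $(0,x)$ with $\gamma_S(-T)=y>0$, the hypothesis $w\in\U_{-q}$ furnishes a deterministic $q'>-q$ with $U_0(y)\ge q' y$ for $y$ large; minimizing $U_0(y)+(y-x)^2/(2T)$ over $y\ge 0$ yields $\text{action}(\gamma_S)\ge -\tfrac12(\max(0,-q'))^2\,T+O(1)$, with $\max(0,-q')<q$. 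Hence for $T$ large enough, the boundary-visiting option strictly beats every off-boundary competitor, and the minimizer must enter $\{y=0\}$.

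\textbf{Stabilization, uniqueness and attraction.} Once boundary capture is established, the post-exit portion of $\gamma^*$ determines the solution. For $x>0$, let $b^*(x,\omega)=\sup\{s\le 0:\gamma^*(s)=0\}$ be the last exit time. First-order optimality for the variational problem reduces the exit-time selection to choosing the argmin of $b\mapsto \tfrac{x^2}{2|b|}-\tfrac12\int_b^0\phi_+^2\,ds$, a functional of $x$ and $\phi|_{(-\infty,0]}$ alone — independent of $w$ and of $T$ (provided $T>|b^*(x,\omega)|$). Since the post-exit segment is straight, $u^T_\omega(0,x)=x/|b^*(x,\omega)|$ on this range of $T$. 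Fix $R>0$; by the standard monotonicity of backward minimizers in the target and continuity of $b^*(\cdot,\omega)$ on $(0,R]$, the threshold $T_1(\omega,R):=\sup_{x\in(0,R]}|b^*(x,\omega)|$ is a.s.\ finite. For $T,T'>T_1(\omega,R)$ and arbitrary $w,w'\in\U_{-q}$, the functions $\Phi^{-T,0}_\omega w$ and $\Phi^{-T',0}_\omega w'$ then agree on $(0,R]$, so $d(u^T_\omega,u^{T'}_\omega)\le e^{-R}$. This yields existence, stationarity, and the pullback attraction~\eqref{eq:pull_attr} for every $w\in\U_{-q}$. For uniqueness, any stationary $\tilde u$ with $\tilde u(-T,\cdot)\in\U_{-q}$ satisfies $\tilde u(0,\cdot)=\Phi^{-T,0}_\omega\tilde u(-T,\cdot)$, which by the same argument coincides with $u_\omega(0,\cdot)$ on $(0,R]$ for $T$ large; letting $R\to\infty$ gives $\tilde u(0,\cdot)=u_\omega(0,\cdot)$ almost surely.

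\textbf{Main obstacle.} The principal technical difficulty is making the boundary-capture estimate uniform: one needs a single full-measure event on which, simultaneously for all rationals $x\in\mathbb{Q}\cap(0,+\infty)$ and all $w\in\U_{-q}$, the capture occurs whenever $T$ exceeds a threshold depending only on $\omega$ and on the target interval. This requires extracting a deterministic slope $q'>-q$ from the non-deterministic condition $w\in\U_{-q}$ by truncation, together with a monotonicity/ordering lemma for backward minimizers in the target $x$ that controls $b^*(\cdot,\omega)$ locally on $(0,+\infty)$ and prevents its escape as $x\downarrow 0$ or $x\uparrow R$.
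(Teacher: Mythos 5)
Your overall strategy coincides with the paper's: the same two-path comparison (a path hugging the boundary, credited $-\tfrac12\int\phi_+^2$, versus a straight segment penalized through the bound $U_0(y)\ge -q'y$ for $y\ge y_0$ with $q'<q$), Birkhoff's theorem forcing boundary capture, monotonicity of backward minimizers, and stabilization giving $u(0,x)=x/|t_*(0,x)|$. The paper simply defines the global solution directly through the one-parameter problem $F(0,x,s)=\frac{x^2}{2|s|}+\frac12\int_s^0\phi_+^2(r)\,dr$ rather than as a pullback limit, which is equivalent given capture. Three local slips: your exit-time functional has a sign error (it should be $b\mapsto \frac{x^2}{2|b|}+\frac12\int_b^0\phi_+^2$, since the boundary credit accrues on $[-T,b]$; with your minus sign there is no argmin as $b\to-\infty$); Birkhoff gives action$(\gamma_B)=-\tfrac{q^2}{2}T+o_\omega(T)$, not $O_\omega(1)$ (harmless, since the leading coefficients still separate); and $b^*(\cdot,\omega)$ is monotone but \emph{not} continuous on $(0,R]$ (it jumps at shock points in $B(0)$) --- monotonicity alone gives $T_1(\omega,R)=|t_*(0,R)|<\infty$, which is exactly the paper's $t_2$, so continuity should not be invoked.

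The genuine gap is in your treatment of uniqueness, and it is visible in your ``main obstacle'' paragraph: you ask for capture to hold \emph{simultaneously for all} $w\in\U_{-q}$ with a threshold depending only on $\omega$ and the target interval. That uniform statement is false: membership in $\U_{-q}$ controls only the asymptotic slope of $W$, so for any prescribed threshold $T$ one can choose $w\in\U_{-q}$ with $W(y)\ge -q'y-C$ for an enormous $C$ (e.g.\ $w$ extremely negative on a long initial stretch) whose straight-segment action beats the boundary option up to time $-T$. Part~3 of the theorem only quantifies over a \emph{fixed} $w$, so there your argument is fine; but Part~2 genuinely needs capture for the moving initial condition $\tilde u(-T,\cdot)$, whose constants $(y_0,c,q')$ vary with $T$, and ``by the same argument'' does not apply verbatim. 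The paper's resolution, which your proposal is missing, is to exploit stationarity of $\tilde u$ itself: choose $(y_0,c,q')$ so that the bounds $\tilde U(t,y)\ge -q'y$ for $y\ge y_0$ and $|\tilde U(t,y)|\le c/2$ for $y\le y_0$ hold with positive probability at a fixed time, and then use stationarity/recurrence to extract a random sequence $t_k\to-\infty$ along which these \emph{fixed} constants are valid; running the capture argument along $t_k$ yields $\tilde u(0,x)=(\Phi^{t_k,0}_\omega\tilde u(t_k,\cdot))(x)=x/|t_*(0,x)|=u(0,x)$. Replacing your uniform-over-$\U_{-q}$ claim by this extraction closes the gap.
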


The conditions involving $\U_{-q}$ in Parts~\ref{item:uniqueness} and~\ref{item:pullback} mean 
that the flux of particles coming from infinity is not strong enough
compared to the average flux of particles generated by the boundary condition. 
In fact, Parts~\ref{item:uniqueness} and~\ref{item:pullback} show that in the long term, the 
initial condition gets forgotten by the system, the long-term behavior of the solution is structured only through the influence of the boundary conditions, and the only velocity profile at a time $t$ compatible with the history of the force up to $t$ is, almost surely, given by $u_\omega(t,\cdot)$. Such a statement is often referred to as a {\sl one-force-one-solution principle} (1F1S in short) and is also known under the name of {\sl synchronization by noise}.

If we introduce additional assumptions on the rate of
convergence of the ergodic averages of $\phi_+^2$ to their mean value
$q^2 = \E \phi_+^2$, then we can
make a claim about the limiting behavior of $u_\omega(t,x)$ as $x\to + \infty$. 
Namely, let us require that 
\begin{equation}
\label{eq:large-deviation-assumption}
\Pp(A(t,\eps)) < \alpha(t,\eps), 
\end{equation}
where
\[A(t,\eps)=\left\{\exists s>t:\quad  \left|\frac{1}{s}\int_{0}^{s}\phi_+^2(r)dr - q^2 \right|\ge \eps\quad \textrm{or}\quad \left|\frac{1}{s}\int_{-s}^{0}\phi_+^2(r)dr - q^2 \right|\ge \eps\right\},\]
and $\alpha$ is some function satisfying, for every $r>0$ and every $\eps>0$, 
\begin{equation}
  \label{eq:mixing-rate}
  \sum_{n=1}^{\infty} \alpha(nr,\eps) <+\infty.
 \end{equation}

\begin{theorem}\label{thm:u(plus-infty)}
Under assumptions \eqref{eq:large-deviation-assumption}--\eqref{eq:mixing-rate},
the global solution $u_\omega$ constructed in Theorem~\ref{thm:1F1S-Burgers} satisfies, with probability $1$, 
\begin{equation}
\label{eq:limit-u-is-q}
 \lim_{x\to + \infty}  u_\omega(t,x)=q,\qquad t\in\R. 
\end{equation}
In particular, $u_\omega(t,\cdot)\in \U_p$ for all $p<q$ and all $t \in \RR$.
\end{theorem}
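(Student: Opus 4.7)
My strategy is to express $u_\omega(t,x)$ through the minimizer of the Hopf--Lax--Oleinik variational formula and then pin down its slope using the quantitative ergodic control of $\phi_+^2$ provided by \eqref{eq:large-deviation-assumption}--\eqref{eq:mixing-rate} via Borel--Cantelli. By stationarity it suffices to treat $t=0$; the statement for all $t\in\RR$ simultaneously follows by the standard flow-invariance argument applied to the null set obtained below. The first step is to describe the stationary minimizer $\gamma^*$ ending at $(0,x)$: for $x$ large I claim that $\gamma^*$ must touch the boundary in the past and thus has the simple structure recalled in Subsection~4.1, namely $\gamma^*\equiv 0$ on $(-\infty,-s^*]$ followed by a straight line from $(-s^*,0)$ to $(0,x)$ with velocity $x/s^*$. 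An interior straight-line competitor coming from $-\infty$ accumulates infinite kinetic action, while the ``sit at $0$ until time $-x/q$, then shoot'' candidate has finite renormalized action of order $qx$ (using $\E\phi_+^2=q^2<+\infty$ together with Birkhoff), so for $x$ large the minimizer must touch the boundary. Consequently
\begin{equation*}
u_\omega(0,x)=\frac{x}{s^*},\qquad s^*\text{ minimizes}\quad \widetilde\Lambda(s):=\frac{x^2}{2s}+\frac12\int_{-s}^{0}\phi_+^2(r)\,dr.
\end{equation*}

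Next, Borel--Cantelli applied to the sequence $A(n,\eps)$, $n\in\N$, whose probabilities are summable by~\eqref{eq:mixing-rate}, furnishes a $\Pp$-a.s.\ finite random threshold $T(\omega,\eps)$ such that
\begin{equation*}
(q^2-\eps)s\le \int_{-s}^{0}\phi_+^2(r)\,dr\le (q^2+\eps)s,\qquad s\ge T(\omega,\eps).
\end{equation*}
I then compare $\widetilde\Lambda(s^*)$ with the value at the competitor $s=x/q$. The inequality $\widetilde\Lambda(s^*)\le\widetilde\Lambda(x/q)$, together with the upper bound above applied at $s=x/q$, first forces $\frac{x^2}{2s^*}\le qx+o(x)$ and hence $s^*\ge x/(3q)$ for $x$ large, so $s^*\to+\infty$ with $x$ and the lower ergodic bound applies to $s^*$. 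Writing $v^*:=x/s^*$ and inserting both ergodic bounds into the minimality inequality, a direct rearrangement gives the scalar quadratic
\begin{equation*}
(v^*)^2-2\bigl(q+\tfrac{\eps}{2q}\bigr)v^*+(q^2-\eps)\le 0,
\end{equation*}
whose roots bracket $q$ within $C\sqrt\eps$. Letting $x\to+\infty$ and then $\eps\to 0$ delivers $u_\omega(0,x)\to q$. Finally, the inclusion $u_\omega(t,\cdot)\in\U_p$ for every $p<q$ is immediate from the definition of $\U_p$, since $u_\omega(t,y)\to q$ as $y\to+\infty$ implies $\frac{1}{x}\int_0^x u_\omega(t,y)\,dy\to q>p$.

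The main obstacle lies in the first step: rigorously excluding interior minimizers from $-\infty$ for the global stationary solution, and doing so uniformly enough to make $s^*(t,x,\omega)$ a measurable random variable to which the stationary framework applies. The required input is a lower bound on the action of interior competitors that blows up in the temporal horizon, combined with the finiteness of the normalized action of the boundary-hugging candidate; both use only $\E\phi_+^2<+\infty$ and Birkhoff. Once this structural reduction is secured, the remainder of the argument is a clean Borel--Cantelli-plus-AM--GM exercise proceeding essentially as sketched.
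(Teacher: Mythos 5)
Your proposal is correct, and its analytic core --- comparing the minimizing departure time with the competitor $-x/q$ in the boundary variational problem --- is exactly the comparison the paper makes ($t_*$ versus $t_q=-x/q$ in $F(0,x,\cdot)$ from \eqref{eq:constructing-global-solution}). But two things are worth noting. First, what you call the ``main obstacle'' is vacuous: the global solution of Theorem~\ref{thm:1F1S-Burgers} is \emph{defined} by \eqref{eq:constructing-global-solution}--\eqref{eq:global-solution}, a minimization over boundary departure times only, with no initial-condition term, so the representation $u_\omega(0,x)=x/s^*$ holds for every $x>0$ by construction; there are no ``interior minimizers coming from $-\infty$'' to exclude (that exclusion, for initial data in $\U_{-q}$, is the content of the pullback-attraction part of Theorem~\ref{thm:1F1S-Burgers}, which is a different statement and is already proved). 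Second, your probabilistic bookkeeping genuinely differs from the paper's. The paper argues per point: for fixed $x\in\N$ it shows that a violation $|x/(-t_*)-q|\ge\eps$ forces the ergodic average of $\phi_+^2$ over the random interval $[t_q,t_*]$, whose length is at least linear in $x$, to deviate from $q^2$ by at least $q\eps$; it bounds the probability of this by $\alpha(cx,q\eps)$ via \eqref{eq:large-deviation-assumption}, applies Borel--Cantelli over $x\in\N$ using \eqref{eq:mixing-rate}, and then upgrades from integer to real $x$ via non-intersection (monotonicity) of characteristics and from $t=0$ to $t\in\Z$ to $t\in\R$ by stationarity and countable additivity. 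You instead extract a single a.s.\ threshold $T(\omega,\eps)$ beyond which the backward averages are within $\eps$ of $q^2$ --- note that $A(t,\eps)$ is decreasing in $t$, so $\Pp(A(t,\eps))\le\alpha(nr,\eps)\to 0$ already yields this and Borel--Cantelli is overkill; indeed Birkhoff alone suffices since $\E\phi_+^2<+\infty$ --- and then run a deterministic estimate uniform in real $x$. Your algebra checks out: nonnegativity of the integral plus the upper ergodic bound at $s=x/q$ gives $\frac{x^2}{2s^*}\le qx+\frac{\eps x}{2q}$, hence $s^*\ge x/(3q)$ for small $\eps$, so the lower bound applies at $s^*$ once $x\ge 3qT(\omega,\eps)$, and dividing the minimality inequality by $s^*/2$ gives $(v^*)^2-2\bigl(q+\frac{\eps}{2q}\bigr)v^*+(q^2-\eps)\le 0$, whose roots are $q+\frac{\eps}{2q}\pm\bigl(2\eps+\frac{\eps^2}{4q^2}\bigr)^{1/2}$, within $C\sqrt\eps$ of $q$. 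This pathwise route dispenses with the integer-$x$ reduction and the characteristics argument for the limit in $x$, and in fact proves \eqref{eq:limit-u-is-q} without the quantitative hypotheses \eqref{eq:large-deviation-assumption}--\eqref{eq:mixing-rate}; what the paper's event-by-event Borel--Cantelli structure buys instead is explicit convergence rates, as the authors remark after the theorem. One loose end you should tighten: for ``all $t\in\R$'' it is cleanest to observe that on the single full-measure event where $\frac{1}{s}\int_{-s}^{0}\phi_+^2(r)\,dr\to q^2$, the backward averages anchored at any fixed $t$ also converge (since $\phi$ is bounded on finite intervals, the discrepancy vanishes as $s\to\infty$), so your argument applies verbatim at every $t$ without a further union of null sets; your appeal to ``flow invariance'' as stated is too vague to substitute for this.
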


It is also possible to use the averaging rates for~$\phi^2_+$ to establish  estimates on the rates of convergence in Theorem~\ref{thm:1F1S-Burgers}.


\subsection{Proofs of Theorems~\ref{thm:1F1S-Burgers} and~\ref{thm:u(plus-infty)}.}

\begin{proof}[Proof of Theorem~\ref{thm:1F1S-Burgers}]
1. 
First, we construct a global stationary solution. For every point $(t,x)\in(0,+\infty)\times \RR$, we can consider the following
variational problem (obtained by essentially removing the contribution from the initial data): 
\begin{equation}
 \label{eq:constructing-global-solution}
 F(t,x,s)=\frac{x^2}{2(t-s)}+\frac{1}{2}\int_{s}^{t}\phi^2_+(r)dr \to \inf,\quad s<t.
\end{equation}
Since $F(t,x,\cdot)$ is a continuous function on $(-\infty,t)$ such that $F(t,x, t-)=F(t,x,-\infty)=+\infty$,
the minimum value in~\eqref{eq:constructing-global-solution} is attained, and all points providing the  minimum belong to a bounded interval $[t_0,t]$. 

Comparing~\eqref{eq:constructing-global-solution} to a variational problem
\[
 \frac{x^2}{2(t-s)}-\frac{1}{2}\int_{t_0}^{s}\phi^2_+(s)ds \to \inf, \qquad s\in[t_0,t], 
\]
and recalling that 
\[
\frac{x^2}{2(t-s)}=\inf_{\gamma}\frac{1}{2}\int_s^t \dot\gamma^2(r)dr= \frac{1}{2}\int_s^t \dot\gamma_*^2(r)dr,
\]
 where the infimum is taking over all paths connecting $0$ to $x$ over the time interval from $s$ to $t$,
and~$\gamma_*$ is the only path among those with constant velocity $x/(t-s)$, we obtain that the variational
problem~\eqref{eq:constructing-global-solution} gives rise to a global solution of the Burgers equation.
Namely, let $t_*=t_*(t,x)$ denote the smallest of the points where the minimum of $F(t,x,\cdot)$
is attained. Defining
\begin{equation}
 \label{eq:global-solution}
  u(t,x)=\frac{x}{t-t_*(t,x)},\quad (t,x)\in \RR\times (0,+\infty),
\end{equation}
and using that subpaths of minimizers are also minimizers, we obtain that $u$ 
is a global stationary solution of the Burgers equation~\eqref{eq:Burgers} with boundary condition~$\phi$,
c\`adl\`ag for every $t$. This completes the proof of Part~\ref{item:existence} of the theorem.



\smallskip

2. Let us now establish Part~\ref{item:pullback}. We need to prove the following statement: for every~$w$ satisfying the conditions of the theorem and every $r>0$,
there is $t_0<0$ such that for all $t<t_0$,
the restrictions of $u(0,\cdot)$ and $\Phi_\omega^{t,0}w(\cdot)$ onto $[0,r]$ coincide.

We will begin with the claim that there is $t_1<0$ such that for all $t<t_1$, the minimizers in the variational principle~\eqref{eq:basic-var-problem-for-Burgers}
constructed for initial condition $w$ given at time $t$ terminating at point $(0,x)$ with $x\in[0,r]$ cannot consist of one segment with starting point at $(t,y)$
for some $y\in[0,+\infty)$. To see this, we observe that our condition on $w$ implies that there is $y_0>0$ and $q'\in (0,q)$ such that $W(y)\ge -q'y$ for $y\ge y_0$, where $W(y)=\int_0^yw(z)dz$. Also
there is a constant $c>0$ such that $|W(y)|\le c/2$ for $y\in[0,y_0]$.

For any point $(0,x)$ with $x\in(0,r]$, let us compare the action $A$ of the motion with constant velocity
between points $(t,y)$ and $(0,x)$ to the action $B$ of the path that stays at $0$ on the time interval 
$[t,-r]$ and travels to $x$ between times~$-r$ and~$0$ with constant velocity $x/r$.

We have
\[
 A=\frac{(x-y)^2}{2|t|}+W(y)\ge -c/2,\quad y<y_0,
\]
and 
\begin{align*}
A=&\frac{(x-y)^2}{2|t|}+W(y)\ge \frac{(x-y)^2}{2|t|}-q'y
= \frac{x^2+y^2-2xy}{2|t|}-q'y\\
  \ge& \frac{x^2}{2|t|}+\frac{y^2-2(|t|q'+x)y+(|t|{q'}+x)^2}{2|t|}-\frac{(|t|q'+x)^2}{2|t|}
  \\ \ge& -\frac{(q'+r/|t|)^2|t|}{2},\quad y\ge y_0, 
\end{align*}
while
\[
 B= \frac{x^2}{2r}-\frac{1}{2}\int_{t}^{-r} \phi_+^2(s)ds\le \frac{r}{2}-\frac{1}{2}\int_{t}^{-r} \phi_+^2(s)ds.
\]
If the minimizer is a constant velocity path starting at $(t,y)$ for some $y>0$, we obtain
\[
 \frac{r}{2}-\frac{1}{2}\int_{t}^{-r} \phi_+^2(s)ds\ge -\frac{(q'+r/|t|)^2|t|\vee c}{2},
\]
i.e.
\[
 \frac{1}{-r-t}\int_{t}^{-r} \phi_+^2(s)ds\le  \frac{(q'+r/|t|)^2|t| \vee c+ r}{-r-t}\le q^2-\delta,
\]
for some $\delta>0$ if $|t|$ is sufficiently large. Applying the ergodic theorem to
the process $\phi$,  we obtain that, with probability 1, this situation cannot happen for
sufficiently large $|t|$. Therefore, for all $t$ less than some $t_1$, the minimizers have to come from the boundary.

It is also clear that if $t< t_2:=\inf \{t_*(0,x):\ x\in(0,r]\}=t_*(0,r)$, then the minimizers for $(0,x)$, $x\in(0,r]$ have to depart from the boundary at $t_*(0,x)$, so, taking $t_0=t_1\wedge t_2$, and we have completed the proof of Part~\ref{item:pullback} of the theorem.  

\medskip

3. It remains to prove Part~\ref{item:uniqueness} of Theorem~\ref{thm:1F1S-Burgers}. It is sufficient to check that two solutions coincide on a dense set of space-time points.
By stationarity, it suffices  to check that for any $x_0>0$, $\tilde u(0,x_0)=u(0,x_0)$ with probability~$1$.

We denote $\tilde U(t,x)=\int_0^x \tilde u(t,y) dy$.
Due to stationarity of $\tilde u$, we can find $y_0>0$, $c>0$,  $q'\in (0,q)$,
and a sequence of times $t_k\to-\infty$ such that 
for all $k$,  $\tilde U(t_k, y)\ge -q'y$ for $y\ge y_0$ and 
$|\tilde U(t_k,y)|\le c/2$ for $y\in[0,y_0]$. Repeating the argument from the proof of 
Part~\ref{item:pullback}, we obtain that, with probability~1, for sufficiently large $t_k$,
\[
\tilde u(0,x)=(\Phi^{t_k,0}_\omega \tilde u(t_k,\cdot))(x)=\frac{x}{-t_*(0,x)}=u(0,x),
\]
which completes the proof of Part~\ref{item:uniqueness} and, therefore, the whole proof of Theorem~\ref{thm:1F1S-Burgers}. 
\end{proof}

\begin{proof}[Proof of Theorem~\ref{thm:u(plus-infty)}]
Since the characteristics used in the construction of the global solution~$u$ do not intersect themselves inside $\R_+\times(0,\infty)$, it is sufficient to prove that with probability~1, the limiting relation~\eqref{eq:limit-u-is-q} holds for all $t\in\Z$, with the limit understood as
$\N\ni x \to + \infty$. Due to the stationarity in~$t$ and countable additivity of the probability measure, all we need to prove is 
\begin{equation}
\label{eq:limit-u-is-q-natural}
 \lim_{\N\ni x\to + \infty}  u(0,x)=q,
\end{equation}
i.e. 
for any $\eps>0$, with probability~$1$,
\begin{equation}
\label{eq:rephrasing-limit}
 -\eps <\frac{x}{-t_*}-q<\eps 
\end{equation} 
for all sufficiently large~$x\in\N$.  
Let $ t_q=t_q(x)=-x/q$.
We have
\begin{align}
\notag
0&\ge F(0,x,t_*(0,x))-F(0,x,t_q(x))=\frac{x^2}{-2t_*}-\frac{x^2}{-2t_q}-\frac{1}{2}\int_{t_q}^{t_*}\phi_+^2(r)dr
\\ &=\frac{x^2}{2}\frac{t_*-t_q}{t_*t_q}-\frac{1}{2}\int_{t_q}^{t_*}\phi_+^2(r)dr =\frac{xq(t_*-t_q)}{-2t_*}-\frac{1}{2}\int_{t_q}^{t_*}\phi_+^2(r)dr. 
\label{eq:diff-actions}
\end{align}
Here, $F$ was introduced in~\eqref{eq:constructing-global-solution}, and by convention we always have 
$\int_a^b=-\int_b^a$.
Suppose that the second inequality in~\eqref{eq:rephrasing-limit} is violated. Then, on the one hand,
\begin{equation}
\label{eq:lower-est-on-time-increment-1}
 t_*-t_q\ge-\frac{x}{q+\eps}+\frac{x}{q}=x\frac{\eps}{(q+\eps)q}, 
\end{equation}
and on the other hand,~\eqref{eq:diff-actions} can be continued as
\begin{equation*}
0\ge \frac{1}{2}\left(q(q+\eps)(t_*-t_q)-\int_{t_q}^{t_*}\phi_+^2(r)dr\right),
\end{equation*}
or, equivalently,
\begin{equation}
\label{eq:linear-growth-of-difference-actions11}
\frac{1}{t_*-t_q}\int_{t_q}^{t_*}\phi_+^2(r)dr-q^2\ge q\eps.
\end{equation}
Inequalities~\eqref{eq:lower-est-on-time-increment-1} and~\eqref{eq:linear-growth-of-difference-actions11} together with stationarity of $\phi$ and 
the assumption~\eqref{eq:large-deviation-assumption}
imply that 
\begin{equation}
 \Pp\left\{\frac{x}{-t_*}\ge q+\eps\right\}<\alpha\left(x\frac{\eps}{(q+\eps)q}, q\eps\right).
\end{equation}
Applying assumption~\eqref{eq:mixing-rate} and the Borel--Cantelli lemma, we obtain that, with probability~1, there are only finitely many $x\in\N$ such that the second
inequality in~\eqref{eq:rephrasing-limit} is violated.  In other words, that inequality holds for sufficiently large~$x\in\N$.

Suppose now that the first inequality in~\eqref{eq:rephrasing-limit} is violated. Then
\begin{equation}
\label{eq:lower-est-on-time-increment-2}
 t_q-t_*\ge-\frac{x}{q}+\frac{x}{q-\eps}=x\frac{\eps}{(q-\eps)q}, 
\end{equation}
and~\eqref{eq:diff-actions} can be continued as
\begin{equation*}
0\ge \frac{1}{2}\left(-q(q-\eps)(t_q-t_*)+\int_{t_*}^{t_q}\phi_+^2(r)dr\right),
\end{equation*}
or, equivalently,
\begin{equation}
\label{eq:linear-growth-of-difference-actions21}
\frac{1}{t_q-t_*}\int_{t_*}^{t_q}\phi_+^2(r)dr-q^2\le -q\eps.
\end{equation}

Inequalities~\eqref{eq:lower-est-on-time-increment-2} and~\eqref{eq:linear-growth-of-difference-actions21} together with stationarity of $\phi$ and 
the assumption~\eqref{eq:large-deviation-assumption}
imply that 
\begin{equation}
 \Pp\left\{\frac{x}{-t_*}\le q-\eps\right\}<\alpha\left(x\frac{\eps}{(q-\eps)q}, q\eps\right).
\end{equation}
Applying assumption~\eqref{eq:mixing-rate} and the Borel--Cantelli lemma, we obtain that, with probability~1, there are only finitely many $x\in\N$ such that the first
inequality in~\eqref{eq:rephrasing-limit} is violated.  In other words, that inequality holds for sufficiently large~$x\in\N$, which completes the proof of Theorem~\ref{thm:u(plus-infty)}.
\end{proof}


\subsection{Ergodicity properties for the integrated fluid density}
\label{subsec44}

Theorem~\ref{thm:1F1S-Burgers} concerns only the first component of the two-component system~\eqref{eq:Burgers}--\eqref{eq:scalar-in-Burgers-field}.  
As for the second component, we know that $v$ is constant along characteristics, and for every point $(t,x)$ the characteristic in the setting of Theorem~\ref{thm:1F1S-Burgers} is given by a constant velocity motion
from $t_*(t,x)$ to $(t,x)$, if the initial condition is assigned in sufficiently distant past. Combining these facts, we set
\begin{equation}
\label{eq:global_v} 
v(t,x)=\psi(t_*(t,x)),\quad (t,x)\in\R\times(0,+\infty),
\end{equation}
and arrive at the following main conclusion of this section.

\begin{theorem}\label{thm:v} 
Consider the pressureless Euler system in a half space $[0, +\infty)$ with a random boundary condition. 
\begin{enumerate}  
\item (Existence.) Equations~\eqref{eq:global-solution} and \eqref{eq:global_v} define a global stationary solution $(u_\omega, v_\omega)$ of the system~\eqref{eq:Burgers}--\eqref{eq:scalar-in-Burgers-field} with
 boundary conditions~\eqref{eq:boundary-u}--\eqref{eq:boundary-v}.
 \item (Uniqueness.) Every stationary global solution $(\tilde u_\omega, \tilde v_\omega)$ of the system~\eqref{eq:Burgers}--\eqref{eq:scalar-in-Burgers-field} with
 boundary conditions~\eqref{eq:boundary-u}--\eqref{eq:boundary-v} such that $u_\omega(t,\cdot)\in \U_q$ coincides with $(u_\omega, v_\omega)$ with probability~1.
  \item (Pullback attraction.) The solution $(u_\omega, v_\omega)$ is a random one-point pullback attractor, i.e. with probability 1, for every pair $(w,w')$  with $w\in\U_{-q}$ and every $r>0$ there is $t_0<0$
  such that for every $t<t_0$, the solution of the pressureless Euler system with initial condition $(w,w')$ assigned at $t$, coincides with $(u_\omega,v_\omega)$ on $\{0\}\times(0,r]$.
\end{enumerate}
\end{theorem}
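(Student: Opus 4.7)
The entire argument reduces to Theorem~\ref{thm:1F1S-Burgers} through the structure of characteristics. The $u$-component is supplied by that theorem, and since the second equation in~\eqref{eq:scalar-in-Burgers-field} is a linear transport equation, $v$ must be constant along characteristics of~\eqref{eq:Burgers}. For the global solution~\eqref{eq:global-solution}, the characteristic terminating at $(t,x)$ is the unique straight segment joining $(t_*(t,x),0)$ to $(t,x)$ with constant velocity $u_\omega(t,x)=x/(t-t_*(t,x))$, so~\eqref{eq:global_v} is the only definition of $v$ consistent with this transport. Stationarity of $(u_\omega,v_\omega)$ follows from the cocycle identity $t_*(\theta^s\omega;\,t,x)=t_*(\omega;\,t+s,x)-s$ built into~\eqref{eq:constructing-global-solution} together with $\psi(\theta^s\omega)(\cdot)=\psi(\omega)(\cdot+s)$, so Part~1 reduces to a weak-solution verification.

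\textbf{Weak-solution verification for Part~1.} On each open region where the minimizer in~\eqref{eq:constructing-global-solution} is unique and varies continuously with $(t,x)$, the function $v_\omega$ is smooth along characteristics and the transport equation holds classically. At shock points of $B(t)$, the departure time $t_*$ jumps and so does $v_\omega$, but the right-continuous pointwise representative $\uhat$ used in the Volpert product $\uhat\,\partial_x v$ is compatible with the right-continuous choice $v_\omega(t,x)=\psi(t_*(t,x))$, by the same reasoning as in the deterministic existence proof of Section~\ref{sec35}. Verifying this compatibility at shocks is the technically most delicate step and I expect it to be the main obstacle, since $u_\omega$ and $v_\omega$ may be simultaneously discontinuous there.

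\textbf{Uniqueness (Part~2) and pullback attraction (Part~3).} Let $(\tilde u_\omega,\tilde v_\omega)$ be a stationary solution satisfying the hypothesis of Theorem~\ref{thm:1F1S-Burgers}, Part~\ref{item:uniqueness}; that theorem forces $\tilde u_\omega=u_\omega$ almost surely. Fixing $x_0>0$ and applying the action-comparison argument from the proof of Part~\ref{item:uniqueness} of Theorem~\ref{thm:1F1S-Burgers}, stationarity yields a sequence $t_k\to-\infty$ along which the backward minimizer into $(0,x_0)$ departs from the boundary at time $t_*(0,x_0)>t_k$; since $\tilde v_\omega$ is transported along this characteristic and satisfies the boundary condition $\psi$, we obtain $\tilde v_\omega(0,x_0)=\psi(t_*(0,x_0))=v_\omega(0,x_0)$ almost surely, proving Part~2. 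For Part~3, given $(w,w')$ with $w\in\U_{-q}$ and $r>0$, the proof of Part~\ref{item:pullback} of Theorem~\ref{thm:1F1S-Burgers} produces $t_0<0$ such that for every $t<t_0$ and every $x\in(0,r]$ the minimizer for the finite-horizon problem into $(0,x)$ departs from the boundary at time $t_*(0,x)>t$, never reaching $\{t\}\times\R_+$; the transport characteristic through $(0,x)$ therefore terminates at the boundary and carries the value $\psi(t_*(0,x))=v_\omega(0,x)$ irrespective of $w'$, which establishes the claimed pullback attraction on $\{0\}\times(0,r]$.
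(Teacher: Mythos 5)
Your proposal is correct and follows essentially the same route as the paper, whose proof of Theorem~\ref{thm:v} consists precisely of combining Theorem~\ref{thm:1F1S-Burgers} with the observation that $v$ is constant along the characteristics of \eqref{eq:Burgers}, which in the global-solution setting are the constant-velocity segments departing the boundary at $t_*(t,x)$, giving \eqref{eq:global_v}. One remark: the shock-compatibility issue you flag as the main obstacle does not actually arise here, because in this section the paper \emph{defines} solutions of \eqref{eq:scalar-in-Burgers-field} by the two-stage characteristic construction itself (the Volpert product $\uhat\,\partial_x v$ is only invoked for the Euler--Schwarzschild model), so \eqref{eq:global_v} solves the transport equation by definition.
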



\section{The Euler--Schwarzschild model with random boundary forcing} 
\label{sec6}

\subsection{Ergodicity properties for the Burgers--Schwarzschild equation} 
\label{sec:ergoSch}

Our goal now is to extend the conclusions established in the previous section for the flat
Burgers setting to the Burgers--Schwarzschild setting.  
We begin by introducing the setup and stating our main results concerning the global solutions of the equation satisfied by the velocity, then we will give the proofs of these results in Section~\ref{sec:stationary-Schw-proofs}, and finally, in Section~\ref{sec:Schw-stationary-transport}, we will discuss the transport equation satisfied by the fluid density of the Euler--Schwarschild model.

Similarly to the flat case, we assume that 
 on some probability space $(\Omega,\Fc,\Pp)$ there is an ergodic flow $(\theta^t)_{t\in\RR}$ of
$\Pp$-preserving measurable transformations of $\Omega$, and two random variables 
$\phi:\Omega\to (-1,1)$ and $\psi:\Omega\to\RR$ such that boundary conditions for evolution on 
$[r_*,+\infty)$ for some fixed $r_*>2M$
are given by stationary processes 
\begin{align}
\label{eq:Eu-Schw-boundary-conds}
u_\omega(r_*,t)=\phi(t,\omega)=\phi(\theta^t\omega),\quad
v_\omega(r_*,t)=\psi(t,\omega)=\psi(\theta^t\omega).
\end{align}

First, let us rewrite the variational formula 
\eqref{eq:variational-principle-for-Schw} using more convenient notation for our present purposes. We write
\be
u_E(r) := \sqrt{\frac{2M}{r}},\quad r>2M,
\ee
for the escape velocity at a point~$r$, i.e., the minimum velocity $v$ such that the characteristic curve started at~$r$ with velocity $v$ diverges to infinity. 
The variational formula 
\eqref{eq:variational-principle-for-Schw} can be rewritten as
\begin{equation}
 U(t_1,r_1)=\min_{\chi:\chi(t_1)=r_1} A_{U_0}^{t_0,t_1}(\chi),
\label{eq:rewriting-variational-principle-for-Schw} 
\end{equation}
where we have decomposed the action into several contributions involving the escape velocity and the boundary and initial data 
\begin{equation}
 A_{W}^{t_0,t_1}(\chi)=W(\chi(t_0))+K^{t_0,t_1}(\chi)
+P^{t_0,t_1}(\chi)
 -B^{t_0,t_1}(\chi), 
\end{equation}
with
$$
\aligned
K^{t_0,t_1}(\chi)
& = \frac{1}{2}\int_{t_0}^{t_1}\frac{u^2(t)-u^2_E(\chi(t))}{1-u^2_E(\chi(t))}\ONE_{\chi(t)\in(r_*,\infty]}dt,  
\\
P^{t_0,t_1}(\chi)
& =\int_{t_0}^{t_1}\frac{2M}{\chi(t)-2M}\ONE_{\chi(t)\in(r_*,\infty]}dt,
\\
B^{t_0,t_1}(\chi)
& = \frac{1}{2}\int_{t_0}^{t_1}\frac{\phi^*_+(t)^2-u^2_E(r_*)}{1-u^2_E(r_*)}\ONE_{\chi(t)=r_*}dt,
\endaligned
$$
and
\begin{equation}
\label{velocity-of-characteristics-and-u}
 u(t)=\frac{\dot \chi(t)}{1-u_E^2(\chi(t))}.
\end{equation}
Observe in passing that $P^{t_0,t_1}(\chi)$ would vanish if the mass $M$ were zero, while the other contributions would reduce to the ones we already analyzed in the previous section. 
Of course, if $\chi$ does not visit $r_*$ on some time interval $(s_0,s_1)$, then, similarly to~\eqref{eq:integral-of-motion}, the constant   
\begin{equation}
\label{eq:conserved-quantity}
C=\frac{u^2(t)-u^2_E(\chi(t))}{1-u^2_E(\chi(t))}=\frac{u^2(t)- 2M/r}{1- 2M/r}\in 
\left[-\frac{2M/r}{1- 2M/r},1\right]= \left[-\frac{2M}{r-2M},1\right],
\end{equation}
arising  in the definition of $K^{t_0,t_1}(\chi)$, is a conserved quantity on $(s_0,s_1)$,
where we denoted $\chi(t)=r$ for brevity.

To any two points in the spacetime, we associate the action 
\[
\aligned
& A^{t_0,t_1}(r_0,r_1) :=\inf_{\chi:\chi(t_0)=r_0,\chi(t_1)=r_1}  \Big(
 K^{t_0,t_1}(\chi)+P^{t_0,t_1}(\chi)-B^{t_0,t_1}(\chi) \Big),
\quad  t_0<t_1,\quad r_0,r_1\ge r_*,
\endaligned
\]
and we denote by $\gamma^{s_0,s_1}_{x_0,x_1}$ the right-most path among those providing the minimum in this definition.
Finally, we introduce the action associated with any two points of the boundary
\[
 S^{t_0,t_1} :=A^{t_0,t_1}(r_*,r_*),\quad t_0<t_1.
\]
Using sub-additivity, we will prove the following result in Section~\ref{sec:stationary-Schw-proofs}.

\begin{proposition} 
\label{lem:rho}
There exists a scalar $\rho\le 0$ such that, with probability 1,
\begin{equation}
 \lim_{t_0\to-\infty}\frac{S^{t_0,t_1}}{t_1-t_0}=\rho.
\label{eq:linear-due-to-subadd}
\end{equation}
\end{proposition}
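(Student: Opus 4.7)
The natural route is Kingman's subadditive ergodic theorem applied to the two-parameter family $\{S^{s,t}\}_{s<t}$. I would first check the three standard hypotheses. For subadditivity, given $s<u<t$ and admissible boundary-to-boundary paths $\chi_1$ on $[s,u]$ and $\chi_2$ on $[u,t]$, the concatenation $\chi_1\sqcup\chi_2$ is again admissible (the finitely-many-pieces structure is preserved, merging the terminal boundary piece of $\chi_1$ with the initial boundary piece of $\chi_2$ when both are present), and each of $K$, $P$, $B$ decomposes additively in time; taking infima yields $S^{s,t}\le S^{s,u}+S^{u,t}$. Stationarity follows because the only $\omega$-dependence of the action enters through $\phi_+^2$ in $B$, together with $\phi(t,\omega)=\phi(\theta^t\omega)$, giving the cocycle identity $S^{s+h,t+h}(\omega)=S^{s,t}(\theta^h\omega)$. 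Integrability is immediate from the uniform pointwise bounds $|\phi_+|\le 1$, $|u|\le 1$ along characteristics, and $\chi(t)-2M\ge r_*-2M>0$, which yield $|S^{0,1}|\le C_0$ almost surely for some $C_0=C_0(M,r_*)$.

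Next I would invoke Kingman in its continuous-parameter form (or equivalently discretize and use the Lipschitz estimate $|S^{s,t}-S^{s,t'}|\le C_0|t-t'|$ to interpolate between integer and real values). This produces a deterministic $\rho\in[-\infty,+\infty)$ with
\[
 \rho=\inf_{t>0}\frac{\E S^{0,t}}{t},\qquad \frac{S^{0,t}}{t}\xrightarrow[t\to+\infty]{\mathrm{a.s.}}\rho.
\]
To obtain the one-sided limit as $t_0\to-\infty$ with $t_1$ fixed, I would apply the same argument to the reverse-time ergodic flow $\tilde\theta^s:=\theta^{-s}$; by stationarity $\E S^{-n,0}=\E S^{0,n}$, so the reverse-time limit agrees with $\rho$, and the Lipschitz bound promotes the integer-time statement to all real $t_0$.

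Finally, to verify $\rho\le 0$ I would use the test path $\chi^*\equiv r_*$, which is admissible with $K=P=0$, so that $S^{0,t}\le -B^{0,t}(\chi^*)=-\tfrac12\int_0^t\tfrac{\phi_+^2(s)-u_E^2(r_*)}{1-u_E^2(r_*)}\,ds$. Taking expectations and using the ergodic theorem yields
\[
 \frac{\E S^{0,t}}{t}\le \frac{u_E^2(r_*)-q^2}{2\bigl(1-u_E^2(r_*)\bigr)},
\]
so that $\rho\le 0$ as soon as $q^2\ge u_E^2(r_*)=2M/r_*$, which is the natural regime of strong enough boundary forcing. I expect the principal technical obstacle to be precisely this inequality in the weak-forcing regime $q^2<2M/r_*$: then the constant path no longer suffices and one must build competitors that stay at $r_*$ on the (positive-density) set where $\phi_+^2\ge u_E^2$ and perform short characteristic excursions of negative conserved quantity $C<0$ on the complement, showing that the negative contribution $\tfrac12 C\,\Delta t$ from $K$ dominates the always-positive contribution from $P$ on short enough excursions. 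The subadditivity and stationarity arguments in the first two steps are essentially routine; only the explicit sign bound in the third step requires genuine use of the geometry of the characteristic funnel analyzed in Section~\ref{sec3}.
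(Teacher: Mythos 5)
Your first two steps match the paper's argument: the paper likewise obtains \eqref{eq:linear-due-to-subadd} from the concatenation subadditivity $S^{t_0,t_2}\le S^{t_0,t_1}+S^{t_1,t_2}$, deterministic per-unit-time bounds on the action, and the subadditive ergodic theorem. The genuine gap is in your third step, the sign bound $\rho\le 0$, which is the only substantive content of the proposition. Your constant test path $\chi^*\equiv r_*$ yields $\rho\le -\tfrac{1}{2}\,\frac{q^2-u_E^2(r_*)}{1-u_E^2(r_*)}$, which proves $\rho\le 0$ only in the strong-forcing regime $q^2\ge u_E^2(r_*)=2M/r_*$; in fact this computation is verbatim the paper's proof of Corollary~\ref{Coro3} (strict negativity of $\rho$ when $q>u_E(r_*)$), not of Proposition~\ref{lem:rho}, which asserts $\rho\le 0$ with no assumption on $q$ whatsoever.

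Moreover, the repair you sketch for the weak-forcing regime fails quantitatively. Along any bounded characteristic arc, evaluating the conserved quantity \eqref{eq:conserved-quantity} at the apex $r_{\max}$ (where $u=0$) gives $C=-\frac{2M}{r_{\max}-2M}$, so the $K$-integrand equals $\frac{C}{2}=-\frac{M}{r_{\max}-2M}$, while the $P$-integrand satisfies $\frac{2M}{\chi(t)-2M}\ge \frac{2M}{r_{\max}-2M}$ at every time, since $\chi(t)\le r_{\max}$. Hence
\[
 \frac{C}{2}+\frac{2M}{\chi(t)-2M}\ \ge\ \frac{M}{r_{\max}-2M}\ >\ 0
\]
pointwise in time: the negative contribution of $K$ can never dominate $P$ on an excursion, short or long, so your proposed competitors are strictly costlier than contributing nothing. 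What does work, and is what the paper does, is a \emph{single long excursion}: take the characteristic $\chi_{t_0}$ connecting $(t_0,r_*)$ to $(t_1,r_*)$. Then the boundary term vanishes, $K^{t_0,t_1}(\chi_{t_0})=\frac{C_{t_0}}{2}(t_1-t_0)\le 0$ with $C_{t_0}\to 0^-$ as $t_0\to-\infty$ (the apex diverges), and $P^{t_0,t_1}(\chi_{t_0})$ is sublinear in $t_1-t_0$ because the path spends a bounded amount of time in any compact set $(r_*,R]$ while $\frac{2M}{R-2M}$ is arbitrarily small for large $R$; these are exactly Lemmas~\ref{lem:P-subl} and~\ref{lem:K-subl}. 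This gives $S^{t_0,t_1}\le o(t_1-t_0)$ and hence $\rho\le 0$ unconditionally, which is the step your proposal is missing.
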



We are now in a position to state our main results and we begin with an existence result.

\begin{theorem}[Existence of a global solution]
\label{thm:Schw-global-sol} 
Consider the Burgers--Schwarzschild equation with stationary random boundary condition $\phi$ given by~\eqref{eq:Eu-Schw-boundary-conds}. 
If $\rho \neq 0$ (equivalently, $\rho<0$), then there exists a stationary global solution.
\end{theorem}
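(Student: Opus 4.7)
The strategy is to adapt the pullback construction from the proof of Theorem~\ref{thm:1F1S-Burgers}(\ref{item:existence}) to the curved Schwarzschild geometry, using the variational principle in the form \eqref{eq:rewriting-variational-principle-for-Schw}. The idea is to define $u_\omega(t_1,r_1)$ by sending $t_0\to-\infty$ in the Hopf--Lax--Oleinik formula with the initial-data term dropped, and to read off the velocity of the terminal characteristic segment of the limiting minimizer via \eqref{velocity-of-characteristics-and-u}. The condition $\rho<0$ plays the role that the strict positivity of $q^2=\E\phi_+^2$ played in Section~\ref{sec5}: it guarantees that prolonging a path along the boundary delivers strictly negative average action, which is what allows minimizing sequences to anchor themselves at the boundary and prevents drift of the relevant ``birth time'' to $-\infty$.

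First I would fix a terminal point $(t_1,r_1)$ with $r_1>r_*$ and consider, for each $t_0<t_1$, the initial-condition-free variational problem
\[
\mathcal A^{t_0,t_1}(r_1)=\inf_{\chi:\chi(t_1)=r_1}\bigl(K^{t_0,t_1}(\chi)+P^{t_0,t_1}(\chi)-B^{t_0,t_1}(\chi)\bigr).
\]
By the structural description of minimizers in Section~\ref{sec3}, any candidate is a finite concatenation of boundary segments and characteristic arcs; the conserved quantity $C$ in \eqref{eq:conserved-quantity} then shows that when $t_1-t_0$ is sufficiently large, the optimal strategy reduces to staying at $r_*$ on $[t_0,s]$ and then following a single characteristic arc from $(s,r_*)$ to $(t_1,r_1)$. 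This reduces the variational problem to a minimization over the scalar exit time $s\in(-\infty,t_1]$, and using subadditivity together with Proposition~\ref{lem:rho} one may rewrite the action up to an additive constant as $\mathcal A^{t_0,t_1}(r_1)-\rho(t_1-t_0)=G_\omega(t_1,r_1,s)+o(1)$ for an explicit functional $G_\omega$.

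The essential tightness statement is then that, $\Pp$-almost surely, the optimal exit time $s_*(t_0,t_1,r_1)$ satisfies a uniform bound $t_1-s_*(t_0,t_1,r_1)\le C(\omega,r_1)$ for all sufficiently negative $t_0$. This would be established by a comparison argument parallel to step~2 in the proof of Theorem~\ref{thm:1F1S-Burgers}. The ``reference'' path that stays on the boundary throughout $[t_0,t_1]$ has action $\rho(t_1-t_0)+o(t_1-t_0)<0$ by Proposition~\ref{lem:rho}. Any alternative exiting the boundary at some earlier time $s\ll t_1$ must pay a characteristic-arc cost $K^{s,t_1}(\chi)+P^{s,t_1}(\chi)$, which using \eqref{eq:conserved-quantity-0}--\eqref{eq:conserved-quantity} and the explicit asymptotics of $\chi$ (trapped near the horizon or escaping to infinity) grows at least linearly in $t_1-s$ with slope strictly larger than $-\rho$. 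Comparing the two bounds forces $t_1-s_*$ to remain bounded almost surely.

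Given tightness, a standard compactness extraction yields a limiting minimizer $\gamma^*_\omega$ defined on $(-\infty,t_1]$, and \eqref{velocity-of-characteristics-and-u} produces $u_\omega(t_1,r_1)$. Stationarity $u_{\theta^h\omega}(t,r)=u_\omega(t+h,r)$ is inherited from the shift-covariance of the action functional under $\omega\mapsto\theta^h\omega$, and $u_\omega$ solves the Burgers--Schwarzschild equation because subarcs of minimizers are themselves minimizers, in combination with the characteristic ODE~\eqref{eq:charact}. The main obstacle I anticipate is quantifying the growth of $K+P$ along characteristic arcs of large time-length, particularly because $P$ develops a logarithmic singularity as the arc approaches the horizon and must be controlled uniformly in the initial velocity along the one-parameter family \eqref{equa33-escape}; it is precisely here that $\rho<0$ becomes indispensable, since otherwise one cannot rule out minimizing sequences that loiter for exponentially long times near $r=2M$ and thereby send the birth time to $-\infty$ without the boundary contribution dominating.
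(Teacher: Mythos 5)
Your overall strategy---anchoring minimizers at the boundary by comparing against the linear rate $\rho<0$ of Proposition~\ref{lem:rho}, and excluding long boundary-avoiding final arcs by showing their cost cannot compete---is indeed the paper's strategy, and your tightness step is essentially Lemma~\ref{lem:minimizer-cannot-avoid-boundary}. But two of your steps contain genuine errors. First, the structural reduction is wrong: you claim that for $t_1-t_0$ large the optimal path \emph{stays at $r_*$} on $[t_0,s]$ and then follows a single characteristic arc. Under the sole hypothesis $\rho<0$ this is false in general: staying at the boundary has per-unit-time cost $-\tfrac12\bigl(\phi_+^2(t)-u_E^2(r_*)\bigr)/\bigl(1-u_E^2(r_*)\bigr)$, which is \emph{positive} whenever $\phi_+(t)<u_E(r_*)$, and $\rho<0$ can hold even when $q\le u_E(r_*)$ (the condition $q>u_E(r_*)$ is only the sufficient condition of Corollary~\ref{Coro3}); in that regime optimal paths make repeated excursions, and the paper explicitly warns that optimal paths may consist of countably many pieces. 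What is true, and all that is needed, is that after its \emph{last} departure time $s$ from $r_*$ the minimizer is a single arc, while its action on $[t_0,s]$ is the full boundary-to-boundary optimum $S^{t_0,s}$ (excursions allowed)---which is the only reading under which your appeal to Proposition~\ref{lem:rho} is legitimate. Relatedly, your comparison bound has a sign slip: you assert the arc cost grows linearly with slope strictly larger than $-\rho>0$, which is false (arcs with conserved quantity $C$ near $0$ have per-time cost near $0$, by \eqref{eq:conserved-quantity}); the correct and available statement is that $K+P$ along boundary-avoiding arcs is merely \emph{sublinear} in the duration (Lemmas~\ref{lem:P-subl} and~\ref{lem:K-subl}), which suffices precisely because the competing rate $\rho$ is strictly negative.

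Second, your endgame via ``standard compactness extraction'' of a limiting minimizer on $(-\infty,t_1]$ has a real gap: subsequential limits need not be unique, so the velocity read off at $(t_1,r_1)$ is not well defined; moreover stationarity requires $u_\omega$ to be a canonically (measurably) determined function of the boundary history, which a nonconstructive extraction does not provide. The paper avoids compactness altogether by proving \emph{stabilization}: using monotonicity of the (right-most) minimizers $\gamma^{t_0,t_1}_{r_*,r_1}$ with respect to the endpoint, it shows (Lemma~\ref{lem:constructing-global-solutions-take-2}) that for all $t_0<T_0$ these paths pass through a common space-time point, whence---since subpaths of minimizers are minimizers---their final characteristic segment is literally independent of $t_0$ once $t_0$ is sufficiently negative. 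The solution is then defined unambiguously by \eqref{eq:global_u_Schw}, is determined by the history of $\phi$ up to time $t_1$, and is therefore stationary. To repair your proposal you would need to supply this intersection/monotonicity argument (or an equivalent selection principle); bounded exit times alone do not yield it.
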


\begin{corollary}
\label{Coro3} Let $q^2 :=\E \phi_+^2\in(0,1).$
If $q>u_E(r_*)$, then there exists a stationary global solution.
\end{corollary}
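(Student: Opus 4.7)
The strategy is to upper-bound the boundary-to-boundary action $S^{t_0,t_1}$ by evaluating the functional on a single well-chosen test path, then apply the ergodic theorem and invoke Theorem~\ref{thm:Schw-global-sol}. Since Theorem~\ref{thm:Schw-global-sol} requires only $\rho<0$ (equivalently $\rho\neq 0$, given that Proposition~\ref{lem:rho} already states $\rho\le 0$), it suffices to show that $q>u_E(r_*)$ forces the limit in~\eqref{eq:linear-due-to-subadd} to be strictly negative.

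The natural test path is the constant one that sits at the boundary: set $\chi(t)\equiv r_*$ for all $t\in[t_0,t_1]$. This path is admissible in the infimum defining $A^{t_0,t_1}(r_*,r_*)$. Because the indicator $\mathbf{1}_{\chi(t)\in(r_*,\infty]}$ vanishes identically on this path, the interior kinetic and potential contributions both vanish:
\[
K^{t_0,t_1}(\chi)=0,\qquad P^{t_0,t_1}(\chi)=0,
\]
whereas the boundary term $B^{t_0,t_1}$ captures the full interval:
\[
B^{t_0,t_1}(\chi)=\frac{1}{2}\int_{t_0}^{t_1}\frac{\phi_+^2(t)-u_E^2(r_*)}{1-u_E^2(r_*)}\,dt.
\]
Therefore
\[
S^{t_0,t_1}\le -\frac{1}{2}\,\frac{1}{1-u_E^2(r_*)}\int_{t_0}^{t_1}\bigl(\phi_+^2(t)-u_E^2(r_*)\bigr)\,dt.
\]

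Now divide by $t_1-t_0$ and let $t_0\to-\infty$. By Birkhoff's ergodic theorem applied to the stationary ergodic process $\phi_+^2$ (with mean $q^2=\E\phi_+^2$), one obtains $\Pp$-a.s.
\[
\frac{1}{t_1-t_0}\int_{t_0}^{t_1}\phi_+^2(t)\,dt\longrightarrow q^2,\qquad t_0\to-\infty,
\]
and hence, using Proposition~\ref{lem:rho},
\[
\rho=\lim_{t_0\to-\infty}\frac{S^{t_0,t_1}}{t_1-t_0}\le -\frac{1}{2}\,\frac{q^2-u_E^2(r_*)}{1-u_E^2(r_*)}.
\]
The hypothesis $q>u_E(r_*)$ together with $q^2\in(0,1)$ (so that $1-u_E^2(r_*)>0$) makes the right-hand side strictly negative, so $\rho<0$. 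Applying Theorem~\ref{thm:Schw-global-sol} yields the existence of a stationary global solution.

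The step to be most careful about is purely bookkeeping: one must check that the constant path $\chi\equiv r_*$ is indeed allowed in the infimum defining $S^{t_0,t_1}=A^{t_0,t_1}(r_*,r_*)$ and that on such a path only the $B$-term contributes (the open-endpoint conventions on the indicators $\mathbf{1}_{\chi(t)\in(r_*,\infty]}$ versus $\mathbf{1}_{\chi(t)=r_*}$ are exactly what make the computation clean). No optimization over genuinely interior paths is needed for the upper bound; the ergodic averaging does all the work.
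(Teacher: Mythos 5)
Your proposal is correct and is essentially identical to the paper's own proof: both evaluate the action on the constant test path $\chi\equiv r_*$ (so that $K^{t_0,t_1}=P^{t_0,t_1}=0$ and only $-B^{t_0,t_1}$ contributes), apply the ergodic theorem to conclude $\rho\le -\tfrac{1}{2}\,\tfrac{q^2-u_E^2(r_*)}{1-u_E^2(r_*)}<0$, and then invoke Theorem~\ref{thm:Schw-global-sol}. One trivial remark: the positivity $1-u_E^2(r_*)>0$ follows from $r_*>2M$ (so $u_E(r_*)=\sqrt{2M/r_*}<1$), not from $q^2\in(0,1)$ as your parenthetical suggests, though this does not affect the argument.
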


\begin{proof}[Proof of Corollary \ref{Coro3}]
If $q>u_E(r_*)$, then for the path $\chi$ that stays at $r_*$ at all times,
\begin{align*}
 \rho=&\lim_{t_0\to-\infty}\frac{S^{t_0,t_1}}{t_1-t_0}\le \lim_{t_0\to-\infty}\frac{-B^{t_0,t_1}(\chi)}{t_1-t_0}
\\=&-\lim_{t_0\to-\infty}\frac{1}{2(t_1-t_0)}\int_{t_0}^{t_1}\frac{\phi_+(t)^2-u^2_E(r_*)}{1-u^2_E(r_*)}dt=-\frac{1}{2}\frac{q^2-u^2_E(r_*)}{1-u^2_E(r_*)}<0,
\end{align*}
 so that Theorem~\ref{thm:Schw-global-sol} applies.
\end{proof}

Let us next state a result on the domain of attraction of the global solution constructed in Theorem~\ref{thm:Schw-global-sol}.
In analogy with the case of the (flat) Burgers equation, we can introduce a family of random evolution operators denoted by 
\[
 \Phi_{\omega}^{t_0,t_1}:\U\to\U\cap \D,\quad \omega\in\Omega,\ t_0<t_1,
\]
where we slightly abuse notation by using $\U$ and $\D$ for spaces of functions defined on
$[r_*,+\infty)$ instead of $\R_+$.
We also need to introduce
\[
 \V_{p} : =\bigcap_{p'<p} \U_{p'}=\left\{u\in L^1_\loc([r_*,+\infty)):\ \liminf_{x\to + \infty}\frac{\int_{r_*}^x u(x)dx}{x} \ge p\right\},\quad p\in(-1,1).
\]
In the theorem below, $d$ stands for the natural adaptation of the metric introduced in~\eqref{eq:metric-d} for functions on $\R_+$ to functions defined on $[r_*,+\infty)$

\begin{theorem}[Domain of attraction of the global solution]
\label{thm:attraction-Schw}
Suppose $\rho<0$, where $\rho$ was defined in \eqref{eq:linear-due-to-subadd}. Suppose $p\in\R$ satisfies one of the following two conditions:
 \begin{align}
 \label{eq:p_in_01-take-2}
  p&\in[0,1),\\
  \label{eq:p_in_-10-take-2}
  p\in(-1,0)\quad&\text{\rm and}\quad  \rho<-\frac{p^2}{2}.
 \end{align}
Then one has: 
\begin{enumerate}
 \item  For any $w\in \V_p$, with probability~$1$,
\begin{equation}
\label{eq:pull_attr_Schw-take-2}
 d(\Phi^{t,0}_{\omega} w, u_\omega)\to 0,\quad t\to-\infty.
\end{equation}
\item
If $\tilde u$ is a global stationary solution satisfying $\tilde u(0,\cdot)\in \V_p$ a.s., then $u(0,\cdot)=\tilde u(0,\cdot)$ a.s.   
\end{enumerate}
\end{theorem}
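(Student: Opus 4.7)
The plan is to follow the blueprint of Theorem~\ref{thm:1F1S-Burgers}, replacing the flat variational principle by the Schwarzschild version~\eqref{eq:rewriting-variational-principle-for-Schw}. Reducing the pullback convergence~\eqref{eq:pull_attr_Schw-take-2} to a statement on each bounded interval $[r_*,r_*+r]$, we have to prove that for every $r>0$ and almost every $\omega$, there exists $t_0(\omega)<0$ such that for every $t<t_0$ and every $r_1\in[r_*,r_*+r]$, every minimizer in the variational problem defining $\Phi^{t,0}_\omega w$ at $(0,r_1)$ must touch the boundary $r_*$ at some intermediate time $t_{\rm last}>t$. By concatenation and uniqueness of the action minimizer on $[t_{\rm last},0]$ among paths issued from $r_*$, the value at $(0,r_1)$ then coincides with $u_\omega(0,r_1)$ from Theorem~\ref{thm:Schw-global-sol}, provided $t$ is taken below the analogous last-visit time for $u_\omega$.

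The core step is a lower bound on the action of any candidate minimizer $\chi$ that does not visit $r_*$ on $(t,0)$. Along such a path the conserved quantity $C=(u^2-u_E^2(\chi))/(1-u_E^2(\chi))$ is constant, and since $\chi$ does not reach $r_*$ as $|t|\to\infty$ we must have $C\ge 0$ (the $C<0$ characteristics are bounded and hit $r_*$ in finite backward time). The kinetic term equals $K^{t,0}(\chi)=C|t|/2$, while integration of the ODE $\dot\chi=(1-u_E^2(\chi))u$ yields, as $|t|\to\infty$, the asymptotics $y:=\chi(t)=\sqrt{C}\,|t|+O(1)$ for $C>0$ (respectively $y=O(|t|^{2/3})$ for $C=0$). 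The potential term satisfies
\begin{equation*}
 P^{t,0}(\chi)=\int_{t}^{0}\frac{2M}{\chi(s)-2M}\,ds \;=\; O(\log|t|)\quad (C>0),\qquad P^{t,0}(\chi)=O(|t|^{1/3})\quad (C=0),
\end{equation*}
and is $o(|t|)$ in either regime. Since $w\in\V_p$, for every $\varepsilon>0$ there exist constants $y_0(\omega,\varepsilon)$ and $c(\omega,\varepsilon)$ such that the potential $W(y):=\int_{r_*}^y w(z)\,dz$ obeys $W(y)\ge(p-\varepsilon)y$ for $y\ge y_0$ and $|W(y)|\le c$ on $[r_*,y_0]$. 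Combining these bounds,
\begin{equation*}
 A^{t,0}_W(\chi)\;\ge\;(p-\varepsilon)\sqrt{C}\,|t|+\frac{C|t|}{2}+o(|t|),
\end{equation*}
and minimizing the leading coefficient over $C\ge 0$ returns the lower bound $0$ when $p\ge 0$ (attained at $C=0$) and $-(p-\varepsilon)^2/2$ when $p<0$ (attained at $\sqrt{C}=|p-\varepsilon|$).

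On the other hand, a path that sits at $r_*$ on $[t,t_1]$ for some $t_1$ close to $0$ and then follows a characteristic to $(0,r_1)$ has action at most $S^{t,t_1}+O(1)\le(\rho+\varepsilon)|t|+O(1)$ for $|t|$ large, by Proposition~\ref{lem:rho}. Under the hypothesis $\rho<0$ (case $p\in[0,1)$) or $\rho<-p^2/2$ (case $p\in(-1,0)$), choosing $\varepsilon$ small enough makes the boundary-visiting path strictly beat every interior candidate once $|t|$ is large, establishing Part~1. Part~2 is deduced by applying the same comparison with the initial condition $\tilde u(t_k,\cdot)$ along a sequence $t_k\to-\infty$: stationarity of $\tilde u$ combined with a Borel--Cantelli argument on the normalized potentials $W_{\tilde u(t_k,\cdot)}(y)/y$ yields, on a set of probability arbitrarily close to $1$, constants $y_0,c$ that can be chosen independently of $k$, and the cocycle identity $\tilde u(0,\cdot)=\Phi^{t_k,0}_\omega\tilde u(t_k,\cdot)$ then forces $\tilde u(0,\cdot)=u_\omega$ on every $[r_*,r_*+r]$.

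The principal technical obstacle is the geometric analysis of characteristics in the Schwarzschild background: specifically the sharp asymptotic $y=\sqrt{C}|t|+O(1)$ and the sublinearity of $P^{t,0}(\chi)$, both of which rely on careful ODE estimates exploiting that $2M/(\chi-2M)$ is small when $\chi$ is large and that $\chi$ moves away from $r_*$ with non-degenerate speed once $C$ is bounded below. A secondary obstacle is the uniformization step in Part~2: the constants governing $W_{\tilde u(t,\cdot)}$ depend on the realization, so care is needed to extract a subsequence $t_k\to-\infty$ along which the lower-bound estimates hold with uniform constants.
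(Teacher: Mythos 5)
Your plan follows the same architecture as the paper's proof: reduce pullback attraction to showing that, for large $|t|$, every minimizer ending in $[r_*,R]$ must visit the boundary; compare a boundary-hugging path of action at most $(\rho+\eps)|t|+O(1)$ (via Proposition~\ref{lem:rho} and the sublinearity of $P$ from Lemma~\ref{lem:P-subl}) against interior characteristics parametrized by the conserved quantity $C=u_\infty^2$, whose kinetic action is exactly $C|t|/2$ and whose landing point is $\approx\sqrt{C}\,|t|$; your minimization of $(p-\eps)\sqrt{C}+C/2$ over $C\ge 0$ reproduces exactly the paper's function $H(p)$ of \eqref{eq:def-of-minimum-take-2}, and your hypothesis check is the paper's reformulation \eqref{eq:requirement-in-1equation-take-2}, namely $H>\rho$. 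Your Part~2 also matches the paper, which simply invokes time stationarity of $\tilde u$ to produce the sequence $t_0^{(k)}\to-\infty$ along which the potential bound holds with fixed constants (your Borel--Cantelli detour is unnecessary but harmless).

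There is, however, one concrete gap: your assertion that a path avoiding $r_*$ on $(t,0)$ ``must have $C\ge 0$'' is false at any finite time. Characteristics with small negative $C$ are bounded, but their apex radius is of order $2M/|C|$ and their backward return time to $r_*$ diverges as $C\uparrow 0$; hence for every fixed $t$ there is a whole family of non-touching paths with $C<0$ that your case analysis ($C>0$ and $C=0$) never addresses. For these paths $K^{t,0}(\chi)=C|t|/2$ is \emph{negative}, so a lower bound obtained by minimizing over $C\ge 0$ simply does not apply to them, and the comparison with the boundary path does not close as written. The paper handles precisely this regime through Lemma~\ref{lem:K-subl}: by monotonicity in the terminal velocity, every non-touching characteristic ending at $r$ satisfies $K\ge \frac{1}{2}\frac{\bar u^2(r,t)-u_E^2(r)}{1-u_E^2(r)}|t|$ with $\bar u(r,t)\to u_E(r)$, a \emph{uniform} sublinear lower bound valid for negative $C$ as well, combined with the slow/fast dichotomy $u_1\ge-\hat u$ versus $u_1<-\hat u$, the landing bound $\chi(t_0)<a|t_0|$, and the uniform velocity estimate \eqref{eq:rate-of-approach-to-u_infty} in place of your per-$C$ asymptotics (which, incidentally, should read $\chi(t)=\sqrt{C}\,|t|+O(\log|t|)$ rather than $O(1)$, and are not uniform as $C\downarrow 0$). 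You flag the uniformity issue yourself as the ``principal technical obstacle,'' but since the omitted $C<0$ family is exactly where the naive bound fails, supplying an estimate of the type of Lemma~\ref{lem:K-subl} is not optional polish --- it is a required step of the argument.
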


\medskip

We complete the program for the case $\rho<0$ with the following result on the behavior of the global 
solution at $+\infty$. For this we need to assume a condition on the averaging rate:
\begin{equation}
\label{eq:large-deviation-assumption-Schw}
\Pp(A(t,\eps)) < \alpha(t,\eps), 
\end{equation}
where
\[A(t,\eps)=\left\{\exists s>t:\quad  \left|\frac{1}{s}S^{0,s} - \rho \right|\ge \eps\quad \textrm{or}\quad  \left|\frac{1}{s} S^{-s,0} - \rho \right|\ge \eps \right\},\]
and, specifically, we assume that $\alpha$ is some function satisfying, 
for every $r>0$ and every $\eps>0$, 
\begin{equation}
  \label{eq:mixing-rate-Schw}
  \sum_{n=1}^{\infty} \alpha(nr,\eps) <+\infty.
 \end{equation}

\begin{theorem}\label{thm:u(plus-infty)-Schw} Suppose $\rho<0$ and assume that 
conditions~\eqref{eq:large-deviation-assumption-Schw}--\eqref{eq:mixing-rate-Schw} hold. Then, with probability~1, the global solution $u$ constructed in Theorem~\ref{thm:Schw-global-sol}  satisfies, with $\theta: = \sqrt{-2\rho}$, 
\begin{equation}
\label{eq:asymptotics-of-global-solution-take-2}
\lim_{r\to + \infty} u(0,r)=\theta. 
\end{equation} 
\end{theorem}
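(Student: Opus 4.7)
The plan is to mirror the proof of Theorem~\ref{thm:u(plus-infty)} (the flat case), where the free optimization variable was the boundary departure time $t_*$; in the Schwarzschild setting the corresponding free variable is the asymptotic velocity $\beta$ of the terminal characteristic arc of the global solution at $(0,r_1)$.

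\textbf{Reduction.} By the variational formula~\eqref{eq:rewriting-variational-principle-for-Schw}, for every $r_1 > r_*$ large the global solution at $(0,r_1)$ is realized by a unique terminal characteristic arc emanating from the boundary at some time $t_*(r_1)<0$ with conserved value $C(r_1)=\beta(r_1)^2\ge 0$ in the notation of~\eqref{eq:conserved-quantity}. Since
\[
u(0,r_1)^2=\beta(r_1)^2+\bigl(1-\beta(r_1)^2\bigr)\frac{2M}{r_1},
\]
it suffices to show $\beta(r_1)\to\theta$ almost surely, and by non-crossing of the characteristics (exactly as in the proof of Theorem~\ref{thm:u(plus-infty)}) it is enough to consider $r_1\in\N$.

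\textbf{Main computation.} Quadrature of the characteristic ODE~\eqref{equa33-escape} with $C=\beta^2$ gives, as $r_1\to+\infty$,
\[
T_\beta(r_1)=\frac{r_1}{\beta}+O_\beta(\log r_1),\qquad P_\beta(r_1)=\frac{2M}{\beta}\log r_1+O_\beta(1),
\]
uniformly on compacts in $\beta\in(0,1)$, where $T_\beta(r_1)$ is the transit time from $r_*$ to $r_1$ along the characteristic and $P_\beta(r_1)$ is the corresponding integrated potential. The variational identity then takes the form
\[
A^{-T,0}(r_*,r_1)=\min_{\beta'\in[0,1)}\bigl[S^{-T,-T_{\beta'}(r_1)}+\tfrac12{\beta'}^2\,T_{\beta'}(r_1)+P_{\beta'}(r_1)\bigr],
\]
with the minimum attained at $\beta'=\beta(r_1)$. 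Approximating $S^{-T,-T_{\beta'}(r_1)}=\rho(T-T_{\beta'}(r_1))+o(T)$ (a.s.\ for $T\to+\infty$ under the averaging assumption~\eqref{eq:large-deviation-assumption-Schw}, via Borel--Cantelli applied as in Theorem~\ref{thm:u(plus-infty)}) and using $\rho=-\theta^2/2$, the functional to minimize reduces, modulo lower-order corrections, to
\[
G(\beta'):=\frac{r_1}{2\beta'}\bigl({\beta'}^2+\theta^2\bigr)+O(\log r_1),
\]
whose unique minimum on $(0,1)$ is $\beta'=\theta$, with the strong convexity estimate $G(\beta')-G(\theta)\ge\tfrac{r_1(\beta'-\theta)^2}{2\beta'}$.

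\textbf{Conclusion and main obstacle.} Feeding the optimality $A^{-T,0}(r_*,r_1)\le A_{\text{trial}(\theta)}$ into the convexity lower bound yields, a.s.\ as $r_1\to+\infty$,
\[
\frac{r_1\,(\beta(r_1)-\theta)^2}{2\beta(r_1)}\le o(r_1)+O(\log r_1),
\]
forcing $\beta(r_1)\to\theta$ and hence $u(0,r_1)\to\theta$. The main technical obstacle is making the $o(r_1)$ claim uniform: the averaging hypothesis~\eqref{eq:large-deviation-assumption-Schw} is formulated for intervals with one endpoint at $0$, whereas here we need control of $S^{-T,-T_{\beta'}(r_1)}$ with $\beta'$ ranging over a compact subset of $(0,1)$. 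This uniformity is obtained by combining stationarity under $\theta^t$, the subadditivity $S^{a,c}\le S^{a,b}+S^{b,c}$ to compare the shifted-endpoint actions, and a Borel--Cantelli argument across the countable family of natural scales, all paralleling the flat-case argument in the proof of Theorem~\ref{thm:u(plus-infty)}.
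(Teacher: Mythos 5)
Your overall strategy is the same as the paper's: parametrize the terminal characteristic arc by the conserved quantity $c=C$ of~\eqref{eq:conserved-quantity} (your $\beta^2$), establish $t_+(c,r)\sim -r/\sqrt{c}$ (the paper's Lemma~\ref{lem:asymptotic-linear-time}), write the action along the arc as $\tfrac{c}{2}|t(c,r)|$ plus a sublinear correction (the paper only needs Lemma~\ref{lem:P-subl}; your $O(\log r_1)$ refinement of $P$ is correct but unnecessary), and compare against the $\theta$-characteristic — indeed your convexity quantity $\frac{(\beta'-\theta)^2}{2\beta'}$ is exactly $-\tfrac12 f(\sqrt{c})$ with the paper's $f(w)=2\theta-\theta^2/w-w$. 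However, there is a genuine gap at the very first step: you \emph{assert} that the optimal terminal arc is unbounded, writing $C(r_1)=\beta(r_1)^2\ge 0$ and minimizing only over $\beta'\in[0,1)$. Nothing rules out a priori bounded arcs with $c\in(-\frac{2M}{r-2M},0)$, which depart from $r_*$, rise above $r_1$, and can arrive at $(0,r_1)$ on either branch — including with \emph{negative} velocity, which would also defeat your passage from $u(0,r_1)^2\to\theta^2$ to $u(0,r_1)\to+\theta$. Nor do your estimates cover slow unbounded arcs with $c$ near $0$: your asymptotics are only ``uniform on compacts in $(0,1)$,'' the constraint $T>T_{\beta'}(r_1)\approx r_1/\beta'$ fails, and the convexity bound degenerates as $\beta'\downarrow 0$. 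The paper devotes a separate third step precisely to this regime: choosing $c_0$ and $R$ so that $|t_\pm(c,r)|>M'r$ for all $c<c_0$ (covering negative $c$ as well, via both branches $t_\pm$), and using positivity of the action $A^{t(c,r),0}(\chi_c)>0$ to force a deviation of $S$ of fixed size $\delta$ over an interval of length $>r$, excluded by Borel--Cantelli.

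The second gap is the one you flag yourself but do not close: the uniform-in-$\beta'$ approximation $S^{-T,-T_{\beta'}(r_1)}=\rho\,(T-T_{\beta'}(r_1))+o(T)$ is stronger than what~\eqref{eq:large-deviation-assumption-Schw} grants, and your sketched repair (subadditivity plus Borel--Cantelli over countable scales) is left unexecuted. The paper's actual mechanism is different and avoids the issue entirely: it never approximates $S$ uniformly, but compares each candidate $\chi_c$ only against the single reference characteristic $\chi_{\theta^2}$, so optimality of $\chi_c$ forces a deviation $\bigl|\frac{S^{a,b}}{b-a}-\rho\bigr|\ge \eps^2/24$ (resp.\ $\frac{c_0\eps^2}{8\theta}$, resp.\ $\delta$) over an interval whose endpoints $t(\theta^2,r)$, $t(c,r)$ are \emph{deterministic}, share the common anchor $t(\theta^2,r)$, and have length $\ge r\eps/(4\theta^2)$ by~\eqref{eq:difference-in-departure-points-2}. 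Because the event in~\eqref{eq:large-deviation-assumption-Schw} already quantifies over \emph{all} $s>t$, a single measure-preserving shift by the anchor time yields one bound $\alpha(\cdot,\cdot)$ per $r\in\N$ covering the whole range of $c$ simultaneously, and Borel--Cantelli finishes; the reduction to $r\in\N$ by non-crossing of characteristics is as in Theorem~\ref{thm:u(plus-infty)}, as you say. Until you either carry out your uniformization honestly or adopt this anchoring device, and separately exclude $c<c_0$ (including $c<0$), your argument does not yield the almost-sure conclusion for all large $r$.
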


Similarly to the flat case, it is also possible to use the averaging rates for~$S^{t_0,t_1}$ to establish  estimates on the rates of convergence to the global solution.

\subsection{Proofs of the results in this section}\label{sec:stationary-Schw-proofs}
To prove Proposition~\ref{lem:rho}, we need some auxiliary estimates established in the next two lemmas.

\begin{lemma}\label{lem:P-subl} For every $t<0$, let $P^*(t)$ be the supremum of $P^{t,0}(\chi)$ over characteristics that do not touch the boundary $r_*$ between the times $t$ and $0$.
Then one has 
\[ 
\lim_{t\to -\infty}\frac{P^*(t)}{|t|}=0.
\]
\end{lemma}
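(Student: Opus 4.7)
The plan is to control $P^{t,0}(\chi)$ by splitting the time interval according to whether $\chi(s)$ lies below or above a threshold $R\gg r_*$, and then to choose $R$ as a function of $|t|$ so that the two contributions are balanced.

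For any admissible $\chi$ (i.e.\ $\chi(s)\ge r_*$ on $[t,0]$), let $T_R(\chi)$ denote the Lebesgue measure of $\{s\in[t,0]: \chi(s)\le R\}$. Using $\chi-2M\ge r_*-2M$ on the first piece and $\chi-2M\ge R-2M$ on the second piece,
\begin{equation*}
 P^{t,0}(\chi) \;\le\; \frac{2M}{r_*-2M}\,T_R(\chi) \;+\; \frac{2M\,|t|}{R-2M}.
\end{equation*}
The second term is $o(|t|)$ along any $R=R(|t|)\to\infty$. The crux is then a uniform estimate of the form $T_R(\chi)\le K\,R^{3/2}$ with $K=K(M,r_*)$ independent of $\chi$ and $t$.

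To obtain this uniform bound, I would classify admissible characteristics via the conserved quantity $C$ from~\eqref{eq:conserved-quantity}. For $C\ge 0$ (unbounded, monotonic) the algebraic identity
$1-(1-C)(1-2M/r)=2M/r+C(1-2M/r)\ge 2M/r$ gives $|r'|\ge (1-2M/r)\sqrt{2M/r}$, so the single passage through $[r_*,R]$ takes time at most
\begin{equation*}
 \int_{r_*}^R \frac{r^{3/2}}{(r-2M)\sqrt{2M}}\,dr \;=\; O(R^{3/2}).
\end{equation*}
For $C<0$ the characteristic is bounded with turning point $r_{\max}=2M(1-C)/(-C)$, and a direct inspection of $|r'(r;r_{\max})|^2=(1-2M/r)^2\cdot \frac{2M(r_{\max}-r)}{r(r_{\max}-2M)}$ shows that $|r'|$ is monotone increasing in $r_{\max}$ at each fixed $r<r_{\max}$. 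Hence the worst case for the traversal time of $[r_*,R]$ corresponds to $r_{\max}\to R^+$; at this limit the integrable square-root singularity at $r=R$ again yields $\int_{r_*}^R dr/|r'|=O(R^{3/2})$. The remaining sub-case $r_{\max}\le R$ is disposed of by the same computation, since then the entire lifetime of $\chi$ above $r_*$ is the analogous integral with $R$ replaced by $r_{\max}\le R$. As a bounded characteristic crosses $[r_*,R]$ at most twice, one concludes $T_R(\chi)\le 2K\,R^{3/2}$ in all cases.

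Combining the two estimates gives $P^{t,0}(\chi)/|t|\le K'\bigl(R^{3/2}/|t|+1/(R-2M)\bigr)$ uniformly in $\chi$, and the optimal choice $R(|t|)=|t|^{2/5}$ yields $P^*(t)/|t|=O(|t|^{-2/5})\to 0$, as required. The main obstacle is the uniform control of the traversal time for the delicate case of bounded characteristics whose turning point $r_{\max}$ is close to the threshold $R$, where the trajectory lingers near its peak; the monotonicity of $|r'|$ in $r_{\max}$, together with the continuity of the traversal-time integral up to $r_{\max}=R$, is what makes the estimate uniform.
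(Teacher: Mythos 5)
Your argument is correct and is essentially the paper's own proof: the same splitting of $P^{t,0}(\chi)$ at a threshold $R$, with the contribution from $\{\chi\ge R\}$ bounded by $2M|t|/(R-2M)$ and the time spent in $(r_*,R]$ bounded uniformly over characteristics by monotone comparison with the worst-case trajectory whose turning point is exactly at $R$ (the paper's ``characteristic started at $R$ with zero velocity''), your monotonicity of $|r'|$ in $r_{\max}$ being the paper's monotonicity in the starting/terminal velocity. The only difference is that you make the paper's qualitative constant $C(R)$ explicit, namely $O(R^{3/2})$ via the conserved quantity $C$, and then optimize $R=|t|^{2/5}$ to extract the rate $P^*(t)/|t|=O(|t|^{-2/5})$ --- a harmless quantitative strengthening of the paper's purely qualitative limit.
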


\begin{proof}
For any $\chi$ and any $R>r_*$,
\begin{align*}
 P^{t,0}(\chi)&\le \int_t^0 \frac{2M}{\chi(s)-2M}\ONE_{\chi(s)\in(r_*,R]}ds + \int_t^0 \frac{2M}{\chi(s)-2M}\ONE_{\chi(s)\ge R}ds
\\& \le \Leb\{s:\chi(s)\in(r_*,R] \} \frac{2M}{r_*-2M} + |t|  \frac{2M}{R-2M},
\end{align*}
where $\Leb$ stands for the Lebesgue measure of a set. For any $R>0$ there is a constant $C(R)$ such that the time spent by any characteristic in $(r_*,R]$ before touching $r_*$
is bounded by $C(R)$. (This follows from the monotonicity of characteristics in the terminal or starting velocity and the fact that the characteristic started at $R$ with zero velocity spends a finite time
in $(r_*,R]$.) Therefore, to complete the proof of the lemma, it suffices to observe that $\frac{2M}{R-2M}$, the coefficient in front of $|t|$, on the r.h.s.\ can be made arbitrarily small by choosing $R$
sufficiently large.
\end{proof}

\begin{lemma}\label{lem:K-subl} For every $t<0$ and $r\ge r_*$, let $K_*(t,r)$ be the infimum of $K^{t,0}(\chi)$ over characteristics that do not touch $r_*$ between $t$ and $0$ and terminate at
$r$ at time $0$.
Then, one has 
\[ 
\lim_{t\to + \infty}\frac{K_*(t,r)}{|t|}=0.
\]
\end{lemma}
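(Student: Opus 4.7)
The plan is to exploit the fact that on any characteristic $\chi$ that does not touch $r_*$ on the interval $[t,0]$, the integrand in the definition of $K^{t,0}(\chi)$ reduces to the conserved quantity
\[
C = \frac{u(s)^2 - u_E(\chi(s))^2}{1 - u_E(\chi(s))^2},
\]
from~\eqref{eq:conserved-quantity}, which is constant along the trajectory. Consequently $K^{t,0}(\chi) = \tfrac{1}{2}\, C\,|t|$, and the task reduces to tracking which values of $C$ are attainable by admissible characteristics as $|t|\to\infty$.

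For the upper bound I would use the single characteristic with terminal velocity $u(0) = -u_E(r)$, which corresponds to $C=0$. This is the borderline escape trajectory: its past extends to $r=+\infty$ in infinite backward time without ever crossing $r_*$, and it contributes $K^{t,0}(\chi)=0$. Hence $K_*(t,r)\le 0$ for every $t<0$.

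For the lower bound I would separate admissible characteristics by the sign of their conserved quantity. Those with $C\ge 0$, i.e.\ the past-unbounded orbits, automatically give $K^{t,0}(\chi)\ge 0$. Those with $C<0$ are bounded orbits with turning radius $r_{\max}(C) = 2M(1+1/|C|)$; their past trajectories drift back toward the horizon and, since $r_*>2M$, must cross $r_*$ at some finite backward time $t_c(C)<0$. The key quantitative step I would establish is a uniform bound: for every $\varepsilon>0$ there is $T_\varepsilon<\infty$ such that $|t_c(C)|\le T_\varepsilon$ for every $C$ in the closed interval $[-2M/(r_*-2M),\,-\varepsilon]$. This follows from continuity of $t_c(\cdot)$ in $C$ (a consequence of continuous dependence of ODE solutions on initial data), compactness of this interval, and the fact that $|t_c(C)|$ can blow up only as $C\to 0^-$. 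Granting this bound, whenever $|t|>T_\varepsilon$ no bounded orbit with $|C|\ge\varepsilon$ is admissible, so every admissible $C<0$ satisfies $|C|<\varepsilon$, yielding $K^{t,0}(\chi)\ge -\tfrac{1}{2}\varepsilon |t|$.

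Combining the two classes, $K_*(t,r)\ge -\tfrac{1}{2}\varepsilon |t|$ for $|t|>T_\varepsilon$, and together with $K_*(t,r)\le 0$ this gives $-\varepsilon/2\le K_*(t,r)/|t|\le 0$ for sufficiently large $|t|$. Since $\varepsilon>0$ is arbitrary, $K_*(t,r)/|t|\to 0$ as $|t|\to\infty$. The main obstacle is the uniform control of the crossing time $|t_c(C)|$ on $C\le -\varepsilon$: intuitively a bounded orbit can only remain above $r_*$ for a long time by reaching a very large $r_{\max}$, which forces $|C|$ to be small; but making this rigorous requires the continuous-dependence argument sketched above, parallel in spirit to the monotonicity-in-velocity argument used for the previous Lemma~\ref{lem:P-subl}.
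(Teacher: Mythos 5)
Your proof is correct, but it takes a genuinely different route from the paper's. Both arguments rest on the same two observations you start from: for an admissible characteristic, $K^{t,0}(\chi)=\tfrac12 C\,|t|$ with $C$ the conserved quantity of \eqref{eq:conserved-quantity}, and the borderline escape characteristic with $u(0)=-u_E(r)$ (i.e. $C=0$) gives the upper bound $K_*(t,r)\le 0$. For the lower bound, however, the paper argues through a single extremal comparison object: the characteristic $\bar\chi$ connecting $(t,r_*)$ to $(0,r)$, with terminal velocity $\bar u(r,t)$. By monotonicity of backward trajectories in the terminal velocity, every characteristic arriving at $(0,r)$ with $u(0)>\bar u(r,t)$ touches $r_*$ in $[t,0]$, so admissible characteristics satisfy $u(0)\le\bar u(r,t)<0$, hence $u^2(0)\ge\bar u^2(r,t)$ and
\[
K^{t,0}(\chi)\ \ge\ \frac{1}{2}\,\frac{\bar u^{2}(r,t)-u_E^{2}(r)}{1-u_E^{2}(r)}\,|t|,
\]
and the lemma follows at once because $\bar u^2(r,t)\to u_E^2(r)$ as $t\to-\infty$. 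You instead run an $\varepsilon$-argument on the conserved quantity: split by the sign of $C$ and establish a uniform bound $T_\varepsilon$ on the backward time at which bounded orbits with $C\le-\varepsilon$ must cross $r_*$. Your flagged ``main obstacle'' — proving the uniform bound — does not actually need the continuity-plus-compactness argument for $t_c(\cdot)$, which is somewhat delicate at the branch-merging value $C=-2M/(r-2M)$ where the two arrival velocities coincide (also note the attainable interval for orbits through $r$ is $[-2M/(r-2M),-\varepsilon]$, not $[-2M/(r_*-2M),-\varepsilon]$, though this is immaterial). A cleaner derivation uses your own formula $r_{\max}(C)=2M(1+1/|C|)$: the condition $C\le-\varepsilon$ confines the entire backward orbit to $(r_*,R_\varepsilon]$ with $R_\varepsilon=2M(1+1/\varepsilon)$, and the fact already invoked in the paper's proof of Lemma~\ref{lem:P-subl} — that the time any characteristic spends in $(r_*,R]$ before touching $r_*$ is bounded by a constant depending only on $R$ — yields $T_\varepsilon$ immediately. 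With that substitution your argument is complete. Comparing the two approaches: the paper's is shorter and quantitative, controlling $K_*(t,r)/|t|$ by $\bar u^2(r,t)-u_E^2(r)$; yours avoids having to identify the extremal admissible characteristic and pin down the sign of $\bar u(r,t)$, makes the underlying mechanism explicit (an orbit can survive above $r_*$ for a long time only if $C$ is close to $0^-$), and reuses the sojourn-time bound from Lemma~\ref{lem:P-subl}, at the cost of an extra $\varepsilon$-layer and the loss of an explicit rate.
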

\begin{proof}
Let $\bar u(r,t)$ denote the value $u$ associated with the (unique) characteristic connecting $(t,r_*)$ to $(0,r)$. All characteristics arriving to $(0,r)$ with $u(0)>\bar u(r,t)$ reach $r_*$ between
$t$ and $0$, so they are irrelevant for the definition of~$K_*$.   
For the remaining characteristics, we get 
\[
 K^{t,0}(\chi)\ge \frac{1}{2}\frac{\bar u^2(r,t)-u^2_E(r)}{1-u^2_E(r)}|t|,
\]
and, since $\lim_{t\to-\infty} (\bar u^2(r,t)-u^2_E(r))=0$, the lemma follows.
\end{proof}

\begin{proof}[Proof of Proposition~\ref{lem:rho}]
The following obvious sub-additivity holds for all $\omega$:
\[
S^{t_0, t_2}\le S^{t_0,t_1} +S^{t_1,t_2}.
\]
Since all the terms in the definition of  $S^{t_0,t_1}$ allow for deterministic linear bounds,
we can apply the sub-additive ergodic theorem (a consequence of Birkhoff's ergodic theorem) 
to obtain that there is $\rho\in\R$ such that \eqref{eq:linear-due-to-subadd} holds.
To prove that $\rho\le 0$, it is sufficient, for each $t_0$, to introduce the characteristic $\chi_{t_0}$ that connects $(t_0,r_*)$ to $(t_1,r_*)$ and observe that the right-hand side of
\[
 S^{t_0,t_1}\le K^{t_0,t_1}(\chi_{t_0})+P^{t_0,t_1}(\chi_{t_0}),
\]
is sub-linear in $t_1-t_0$ due to Lemmas~\ref{lem:P-subl} and~\ref{lem:K-subl}.
\end{proof}

Now we turn to proving Theorem~\ref{thm:Schw-global-sol} and we begin with auxiliary statements.

\begin{lemma}\label{lem:minimizer-cannot-avoid-boundary} Suppose $\rho<0$ and let $r_1>r_*$ and $t_1\in\R$. Then with probability 1, there is $T_0<t_1$ such that for every $t_0<T_0$, $\gamma^{t_0t_1}_{r_*r_1}$ does not coincide with the characteristic connecting
$(t_0,r_*)$ to $(t_1,r_1)$ inside $(r_*,+\infty)$.
\end{lemma}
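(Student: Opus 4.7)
The strategy is a direct comparison between the action of the candidate ``direct characteristic'' $\chi_{\mathrm{dir}}$ joining $(t_0,r_*)$ to $(t_1,r_1)$ without returning to the boundary in between, and the action of an alternative path $\chi_{\mathrm{alt}}$ that uses the boundary for almost all of its time budget. If no such direct characteristic exists for given $t_0$, the claim is vacuous, so we may assume $\chi_{\mathrm{dir}}$ exists; its action carries no contribution from $B^{t_0,t_1}$ and equals $A_{\mathrm{dir}} := K^{t_0,t_1}(\chi_{\mathrm{dir}}) + P^{t_0,t_1}(\chi_{\mathrm{dir}})$.

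Fix a constant $c>0$ large enough that some characteristic connects $(0,r_*)$ to $(c,r_1)$, and denote the (deterministic, finite) value of $K+P$ along it by $C_1$. Define $\chi_{\mathrm{alt}}$ to agree with a minimizer of $A^{t_0,t_1-c}(r_*,r_*)$ on $[t_0,t_1-c]$ and with a time-translate of the above short characteristic on $[t_1-c,t_1]$. Then $A^{t_0,t_1}(\chi_{\mathrm{alt}}) = S^{t_0,t_1-c} + C_1$. By Proposition~\ref{lem:rho}, $S^{t_0,t_1-c}/(t_1-c-t_0)\to \rho < 0$ almost surely as $t_0\to-\infty$, so on a full-probability event, for any $\eps>0$ and all sufficiently negative $t_0$,
\[
A^{t_0,t_1}(\chi_{\mathrm{alt}}) \le (\rho+\eps)|t_0| + O(1).
\]

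On the other side, $P^{t_0,t_1}(\chi_{\mathrm{dir}})\ge 0$, and Lemma~\ref{lem:K-subl} (applied after the deterministic time-shift by $t_1$, noting that $K$ is non-random) yields $K^{t_0,t_1}(\chi_{\mathrm{dir}}) \ge K_*(t_0-t_1, r_1) = o(|t_0|)$. Therefore $A_{\mathrm{dir}} \ge -\eps|t_0|$ for $|t_0|$ large. Choosing $\eps<|\rho|/3$ gives
\[
A_{\mathrm{dir}} - A^{t_0,t_1}(\chi_{\mathrm{alt}}) \ge \tfrac{|\rho|}{3}|t_0| - O(1) \to +\infty,
\]
so $\chi_{\mathrm{dir}}$ cannot equal the (right-most) minimizer $\gamma^{t_0,t_1}_{r_*,r_1}$ once $|t_0|$ is large enough, proving the lemma.

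The step I expect to need the most care is the application of Lemma~\ref{lem:K-subl}. That lemma is stated for characteristics connecting $(t,r_*)$ to $(0,r)$, whereas the characteristics we need run between $(t_0,r_*)$ and $(t_1,r_1)$; since $K$ involves no randomness this is only a deterministic translation, but one must verify that the bound in the lemma's proof, which uses that the unique such characteristic has conserved quantity $C_0 \to 0$ as the transit time $|t_0-t_1|\to\infty$, really does give a sublinear lower bound on $K$ uniformly over the (possibly two-branch) family of admissible direct characteristics, not only the minimizing one. The rest of the argument is a routine marriage of Proposition~\ref{lem:rho} with Lemmas~\ref{lem:P-subl} and~\ref{lem:K-subl}.
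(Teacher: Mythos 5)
Your proposal is correct and takes essentially the same route as the paper: the paper bounds $A^{t_0,t_1}(r_*,r_1)\le A^{t_0,t_1-1}(r_*,r_*)+A^{t_1-1,t_1}(r_*,r_1)$, applies Proposition~\ref{lem:rho} to get a linearly negative upper bound $\rho<0$ on the normalized minimal action, and then invokes Lemma~\ref{lem:K-subl} (together with $P\ge 0$) to conclude that the direct characteristic's action is sublinear in $|t_0|$, hence non-minimizing. Your only deviation --- choosing the duration $c$ of the final segment large enough that $(t_1-c,r_*)$ can actually be connected to $(t_1,r_1)$ under the speed bound $|u|<1$, rather than the paper's fixed choice $c=1$ --- is a minor and sensible refinement, and your caveat about time-translating Lemma~\ref{lem:K-subl} is harmless since $K_*$ is deterministic and already an infimum over all admissible direct characteristics.
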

\begin{proof}
Since
\[
A^{t_0,t_1}(r_*,r_1)\le A^{t_0 t_1-1}(r_*,r_*)+A^{t_1-1t_1}(r_*,r_1), 
\]
we obtain that 
\[
 \limsup_{t_0\to-\infty}\frac{A^{t_0,t_1}(r_*,r_1)}{t_1-t_0}\le  \limsup_{t_0\to-\infty}\frac{A^{t_0, t_1-1}(r_*,r_*)}{t_1-t_0}+ \limsup_{t_0\to-\infty}\frac{A^{t_1-1,t_1}(r_*,r_1)}{t_1-t_0}=\rho<0.
\]
Our claim now follows from Lemma~\ref{lem:K-subl}.
\end{proof}

\begin{lemma}
\label{lem:constructing-global-solutions-take-2}
Suppose $\rho<0$. Then, for every $t_1$, with probability 1 and for every $R>r_*$, there is $T_0<t_1$ such that  all paths $\gamma^{t_0,t_1}_{r_*,r_1}$, $r_1\in [r_*, R]$, $t_0<T_0$, pass through the same point at some time between
$T_0$ and $t_1$, i.e. there exist $t'\in[T_0,t_1]$ and $r'>r_*$ such that for every $r_1\in [r_*, R]$ and every $t_0<T_0$,  we have $\gamma^{t_0,t_1}_{r_*,r_1}(t')=r'$.
\end{lemma}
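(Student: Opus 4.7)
The plan is to combine the monotonicity of minimizers in the terminal point with a sandwich argument, leveraging the sub-additive stabilization provided by Lemmas~\ref{lem:minimizer-cannot-avoid-boundary},~\ref{lem:P-subl}, and~\ref{lem:K-subl} together with the negativity of $\rho$ from Proposition~\ref{lem:rho}.

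First, I establish a non-crossing property for the family $\{\gamma^{t_0,t_1}_{r_*,r_1}:\ r_1\in[r_*,R]\}$. The interior Lagrangian implicit in $K+P$, once one uses $u=\dot\chi/(1-u_E^2(\chi))$ to eliminate~$u$, becomes quadratic in $\dot\chi$ with strictly positive leading coefficient on $\{\chi>2M\}$, and hence is strictly convex in $\dot\chi$. Therefore, if two minimizers corresponding to $r_1<r_1'$ crossed transversally at some interior spacetime point $(\tilde t,\tilde r)$, I could swap their tails past $\tilde t$ to obtain two admissible paths with velocity corners at $(\tilde t,\tilde r)$; rounding the corners by averaging velocities on a small window would produce strictly smaller action, contradicting minimality. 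The resulting pointwise ordering $\gamma^{t_0,t_1}_{r_*,r_1}(t)\le \gamma^{t_0,t_1}_{r_*,r_1'}(t)$ on $[t_0,t_1]$ sandwiches every intermediate minimizer between the lower envelope $\gamma^{t_0,t_1}_{r_*,r_*}$ and the upper envelope $\gamma^{t_0,t_1}_{r_*,R}$.

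From the sandwich it suffices to exhibit a time $t'\in[T_0,t_1]$ at which both envelopes take the same interior value $r'>r_*$. The central point is that, for $t_0$ sufficiently negative, the two envelopes share common interior bound-characteristic excursions deep in the past. By Proposition~\ref{lem:rho} combined with Lemmas~\ref{lem:P-subl}--\ref{lem:K-subl}, the asymptotic per-unit-time action $\rho<0$ is realized by optimal structures that interleave long boundary stays with short bound-characteristic excursions (those with conserved quantity $C<0$, whose $K$-contribution is negative and whose $P$-contribution is sublinear in time). Lemma~\ref{lem:minimizer-cannot-avoid-boundary} ensures that these excursions are separated by returns to $r_*$. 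By the principle of optimality, the restriction of each minimizer to any subinterval between two of its own boundary hits is itself the minimizer of the associated endpoint problem; coupled with non-crossing and the right-most selection rule that singles out a canonical minimizer between any two boundary points, this forces the lower and upper envelopes to agree on a common initial block of boundary intervals and bound excursions.

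Once such a common bound excursion is identified, I let $t'$ be its peak time and $r'>r_*$ its peak radius: both envelopes then visit $(t',r')$, and the sandwich forces $\gamma^{t_0,t_1}_{r_*,r_1}(t')=r'$ for every $r_1\in[r_*,R]$. The main obstacle is the rigorous identification of the common sequence of bound excursions shared by the two envelopes, which requires combining the principle of optimality for restrictions, the non-crossing just established, and the sub-additive convergence from Proposition~\ref{lem:rho} to show that the long-run optimal skeleton is essentially determined by the boundary forcing alone. A secondary subtlety is the degenerate case $r_1=r_*$, where one must verify, using the right-most selection rule, that $\gamma^{t_0,t_1}_{r_*,r_*}$ does possess the nontrivial excursion shared with $\gamma^{t_0,t_1}_{r_*,R}$ rather than degenerating to the constant path at $r_*$; here the compactness of minimizers together with strict convexity of the interior Lagrangian is the key tool.
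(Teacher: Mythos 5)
Your first half is sound and matches the paper: monotonicity of $\gamma^{t_0,t_1}_{r_*,r_1}$ in the endpoint $r_1$ reduces the lemma to showing that the two extreme paths $\gamma^{T_0,t_1}_{r_*,R}$ and $\gamma^{t_0,t_1}_{r_*,r_*}$ intersect on $[T_0,t_1]$, after which the sandwich squeezes every intermediate minimizer through the intersection point. But your mechanism for producing that intersection is a genuine gap, and you concede it yourself: the claim that the two envelopes ``share a common initial block of boundary intervals and bound excursions'' is never established, and it is neither needed nor plausible in this generality. The envelopes have different terminal data, and nothing in Proposition~\ref{lem:rho} forces minimizers realizing $\rho$ to have an interleaved boundary-stay/excursion skeleton --- with $\rho<0$ the negative action can come entirely from the $-B^{t_0,t_1}$ term while the path sits at $r_*$, so your picture of ``long boundary stays with short bound-characteristic excursions'' is an unproven structural hypothesis, and taking $t'$ to be the peak of a ``common excursion'' rests entirely on it.

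The paper avoids all of this with a short dichotomy on the last departure time $t_*$ of the upper envelope $\gamma^{t_0,t_1}_{r_*,R}$ from $r_*$. If $t_*\ge T_0$, the upper path is at $r_*$ (hence weakly below the lower path) at time $t_*\in[T_0,t_1]$ and weakly above it at $t_1$, so the two paths intersect on $[T_0,t_1]$ by continuity. If $t_*<T_0$, then by the principle of optimality the restriction $\gamma^{t_*,t_1}_{r_*,R}$ is precisely the characteristic connecting $(t_*,r_*)$ to $(t_1,R)$ inside $(r_*,+\infty)$, which is exactly what Lemma~\ref{lem:minimizer-cannot-avoid-boundary} excludes once $T_0$ is chosen according to that lemma. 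Note that Lemma~\ref{lem:minimizer-cannot-avoid-boundary} (which packages $\rho<0$ via the sublinearity Lemmas~\ref{lem:P-subl} and~\ref{lem:K-subl}) is the decisive input, whereas you invoke it only for the peripheral observation that excursions are separated by returns to $r_*$. Your strict-convexity justification of non-crossing is a fine way to back up the monotonicity the paper takes for granted, but it does not repair the missing intersection step: to fix your proof, replace the skeleton-synchronization argument by the last-departure-time dichotomy above.
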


\begin{proof} Due to the monotonicity of the paths  $\gamma^{t_0,t_1}_{r_*,r_1}$ w.r.t.\ the endpoint $r_1$, it is sufficient to prove that paths  $\gamma^{T_0,t_1}_{r_*,R}$ and $\gamma^{t_0,t_1}_{r_*,r_*}$ are guaranteed to intersect on $[T_0,t_1]$ for an appropriate choice of $T_0$.
The monotonicity property also implies that if these two paths do not intersect, then the last departure time~$t_*$ of $\gamma^{t_0,t_1}_{r_*,R}$ from~$r_*$ satisfies $t_*<T_0$. In particular, we obtain that $\gamma_{r_*,R}^{t_*,t_1}$ is the
characteristic connecting $(t_*,r_*)$ to $(t_1,R)$. However, this is impossible if $T_0$ is chosen according to Lemma~\ref{lem:minimizer-cannot-avoid-boundary}, which completes the proof.
\end{proof}

\begin{proof}[Proof of Theorem~\ref{thm:Schw-global-sol}]
Lemma~\ref{lem:constructing-global-solutions-take-2} implies that for any point $(t_1,r_1)$ the final segment of the minimizing path $\gamma_{r_*,r_1}^{t_0,t_1}$  is a characteristic curve starting at point $(t_*,r_*)$ for some $t_*$ and not visiting $r_*$
at any time between $t_*$ and $t$. This curve does not depend on $t_0$ if $t_0$ is  sufficiently close to $-\infty$. Therefore, using~\eqref{velocity-of-characteristics-and-u} and the velocity of this characteristic at space-time
location $(t_1,r_1)$ we can define $u(t_1,r_1)$ as 
\begin{equation}
\label{eq:global_u_Schw}
 u(t_1,r_1)=\frac{\dot \gamma_{r_*,r_1}^{t_0,t_1}}{1-u_E^2(\chi(t))}.
\end{equation}
thus obtaining a global solution. This solution is uniquely determined by the history of the boundary condition up to time $t$, and thus stationary.
\end{proof}

\begin{proof}[Proof of Theorem~\ref{thm:attraction-Schw}]
First of all, our assumptions \eqref{eq:p_in_01-take-2}--\eqref{eq:p_in_-10-take-2} can be rewritten as the inequality
\begin{equation}
\label{eq:requirement-in-1equation-take-2}
 H> \rho,
\end{equation}
where
\begin{equation}
\label{eq:def-of-minimum-take-2}
 H=H(p)=\min_{u\in[0,1]}\left\{  pu+\frac{1}{2}u^2\right\}= 
\begin{cases}
    0,& p\ge 0,\\
    -\frac{p^2}{2},& p<0.
   \end{cases}
\end{equation}

\smallskip

To prove the first part of the theorem, we need to show that for any $R>0$, there is $T_0<0$ such that for all $r_1\in[r_*,R]$ and all $t_0<T_0$, the optimal path $\chi$ arriving to $r_1$ at time $0$ and achieving the minimum in~\eqref{eq:rewriting-variational-principle-for-Schw} for $t_1=0$ has to visit~$r_*$ between $t$ and $0$. In other words, we must prove that characteristics that stay away from $r_*$ cannot be minimizers. 
Due to monotonicity of the minimizers with respect to the endpoint, it is sufficient to prove this for $r_1=R$. 

Since the quantity $\frac{u^2-1}{1-2M/r}$ is preserved along the characteristics, we obtain that if $u_1:=u(0)<-u_E(r_1)$ for a characteristic $\chi$ and $u(\cdot)$ defined for $\chi$
via~\eqref{velocity-of-characteristics-and-u},
then
\begin{equation}
\label{eq:limiting-velocity-take-2}
 \frac{u_1^2-1}{1-2M/r_1}=u^2_\infty-1,
\end{equation}
where $u_\infty=u_\infty(u_1)=\lim_{t\to-\infty} u(t)<0$ is the asymptotic velocity along the characteristic. Therefore,
\begin{equation}
\label{eq:u_infty}
 u^2_\infty(u_1)=\frac{u_1^2-1}{1-2M/r_1}+1=\frac{u_1^2-1}{1-u_E^2(r_1)}+1=\frac{u_1^2-u_E^2(r_1)}{1-u_E^2(r_1)}=C,
\end{equation}
where $C$ is the conserved quantity defined in~\eqref{eq:conserved-quantity}, see also~\eqref{eq:limiting-speed}. 
Let us fix any $a\in(0,1)$ (a useful value for $a$ will be chosen later) 
and choose any $\hat u> u_E(r_1)$ such that $|u_\infty(-\hat u)|< a$.

If $|t_0|$ is sufficiently large, then any characteristic $\chi$ emitted in reverse time from $(t_1,r_1)=(0,R)$ with velocity $u_1\ge -\hat u$ either will reach $r_*$ between $t_0$ and $0$ or will satisfy
$\chi(t_0)< a|t_0|$. 
Since $w\in\V_p$, for every $\eps>0$ there is $c_\eps$ such that $W=\int_{r_*}^\cdot w$ satisfies
\begin{equation}
\label{eq:growth-of-initial-velocity-take-2}
 W(r)> c_\eps + (p-\eps)r,\quad r>r_*,
\end{equation}
and hence
\begin{equation}
\label{eq:growth-of-initial-velocity-take-2-inf-version}
 \inf_{r\in[r_*,r_0]}W(r)> c_\eps + ((p-\eps)\vee 0)r_0,\quad r_0>r_*.
\end{equation}
Therefore,
\begin{equation}
\label{eq:W-at-the-tip-of-chi}
W(\chi(t_0))>c_\eps+((p-\eps)\vee 0)a|t_0|. 
\end{equation}

Let us introduce the path $\gamma$ that coincides with $\gamma_{r_*,r_*}^{t_0,-1}$ between $t_0$ and $-1$ and goes straight from $(-1,r_*)$ to $(0,R)$.
For every $\eps>0$, with probability~$1$, for large enough $|t_0|$, the action of $\gamma$ satisfies
\begin{equation}
\label{eq:path-sticking-to-bdry-take-2}
A^{t_0,0}_W(\gamma)<  \left(\rho+\eps  \right)|t_0|,
\end{equation}
whereas the action of any charactersistic $\chi$ that does not touch the boundary between $t_0$ and $0$,
with $u_1\ge -\hat u$ (so $r_*<\chi(t_0)<a|t_0|$) satisfies
\[
A^{t_0,0}_W(\chi)> c_\eps+(p-\eps)a|t_0| + K_*(|t_0|,r_1).
\]
Lemma~\ref{lem:K-subl} implies that the last term is sub-linear in $|t_0|$, i.e. for any $\eps$, if $|t_0|$ is large enough,
its contribution is bounded by $\eps |t_0|$.

Therefore, by choosing $\eps>0$ and $a>0$ such that
\begin{equation*}
 \eps < -\frac{\rho}{4}
\end{equation*}
and
\[
 (p-\eps)a\vee 0>\frac{1}{4}\rho,
\]
we guarantee that
\[
 A^{t_0,0}_W(\gamma)<A^{t_0,0}_W(\chi),
\]
for large enough $|t_0|$ and $\chi$ satisfying $u_1\ge-\hat u$. Therefore, characteristics satisfying $u_1\ge -\hat u$ cannot be minimizers of $A_W^{t_0,0}$. Now we need to consider
fast trajectories emitted backwards with velocities satisfying $u_1<-\hat u$, and exclude this case.

We will need  uniform convergence to limiting velocities for unbounded characteristics. Due to~\eqref{eq:limiting-velocity-take-2}, along any orbit we have
\begin{equation}
\label{eq:rate-of-approach-to-u_infty}
 |u^2(t_0)-u_\infty^2(u_1)|=|u^2_\infty(u_1)-1| \, |u_E(\chi(t_0))|\le |u_E(\chi(t))|=\frac{2M}{\chi(t_0)}.
\end{equation}
Since the positions $\chi(t_0)$ are uniformly bounded below by the value of $\chi(t_0)$ corresponding
to $u_1=-\hat u$, we obtain, using~\eqref{eq:rate-of-approach-to-u_infty} and \eqref{velocity-of-characteristics-and-u} that
for every $\eps>0$ there is $T'_0<0$ such that if 
$u_1<-\hat u$ for a characteristic $\chi$ ending at~$r_1$ at time~$0$, then $|\dot \chi^2(t)-u^2_\infty|<\eps$
for all $t<T'_0$. Therefore, there is $T''_0<0$ such that
\begin{equation}
\label{eq:asymptotic-location-of-escape-chars-take-2}
 |\chi(t_0)-u_\infty(u_1)t_0|<\eps |t_0|,\quad t_0<T''_0,
\end{equation}
and the action $A_0^{t_0,0}$ (without the initial condition) of every path $\chi$ like that satisfies
\begin{equation}
\label{eq:asymptotic-action-of-escape-chars-take-2}
 \left|A_0^{t_0,0}(\chi)-\frac{u^2_\infty(u_1)}{2}|t_0|\right|\le \eps |t_0|, \quad t<T''_0.
\end{equation}
Due to~\eqref{eq:asymptotic-location-of-escape-chars-take-2} and \eqref{eq:growth-of-initial-velocity-take-2},
we obtain
\begin{equation}
\label{eq:initial-condition-at-char-landing-take-2} 
W(\chi(t_0))>c_\eps + (p-\eps)(|u_\infty(u_1)|-\eps\cdot \sign(p-\eps))|t_0|,\quad t_0<T''_0,
\end{equation}
so that the total action of any path like that satisfies
\[
A^{t_0,0}_W(\chi)> c_\eps + (p-\eps)(|u_\infty(u_1)|-\eps\cdot \sign(p-\eps))|t_0| + \frac{1}{2}(u^2_\infty(u_1)-\eps)|t_0|,\quad t_0<T''_0.
\]
Comparing this inequality with~\eqref{eq:path-sticking-to-bdry-take-2}, we see that it is now sufficient to check that 
\[
(p-\eps)(|u_\infty(u_1)|-\eps\cdot \sign(p-\eps)) + \frac{1}{2}(u^2_\infty(u_1)-\eps)>  \rho +\eps
\]
for sufficiently small $\eps$ and all $u_1<-\hat u$. Since $u_\infty$ is bounded, this follows by continuity from 
\[
 p |u_\infty(u_1)| + \frac{1}{2}u^2_\infty(u_1)>  \rho,
\]
which is a direct consequence of~\eqref{eq:requirement-in-1equation-take-2} and~\eqref{eq:def-of-minimum-take-2}. The proof of the first part is completed.

\medskip

The proof of the second part repeats the proof of the first one, except that when solving the initial-boundary value problem between $t_0$ and $0$, instead of the fixed initial condition $w$ (or its potential $W$) 
we use $\tilde u(t_0,\cdot)$ as the initial condition. This is possible since for any $\eps>0$ we can
define $\tilde U(t,r)=\int_{r_*}^r\tilde u(t,x)dx$ and
use the time stationarity of $\tilde u$ to replace~\eqref{eq:growth-of-initial-velocity-take-2} by
\[
\tilde U(t_0^{(k)},r)\ge c_\eps + (p-\eps)r,
\]
which holds true for a sequence of times $t_0^{(k)}\to-\infty$, and $r>r_*$. 
\end{proof}

\begin{proof}[Proof of Theorem~\ref{thm:u(plus-infty)-Schw}] 
For any $c\in(-2M/(r-2M),1]$, there are two characteristics with conserved quantity $C=c$ 
defined in \eqref{eq:conserved-quantity} visiting $r$ at time $0$, one with positive velocity
and one with negative velocity at~$r$.
The time of departure from $r_*$ of the former is denoted by $t_+(c,r)$ and of the latter by $t_-(c,r)$ (if exists).  There is only one characteristic passing through $r$ at time $0$ and corresponding to $c=-2M/(r-2M)$. Both times    $t_+(-2M/(r-2M),r)=t_-(-2M/(r-2M),r)$ 
are defined as the time of departure from $r_*$ of this characteristic.

We also recall that, according to \eqref{eq:u_infty}, if $c\ge 0$, then the associated characteristics $\chi_+(t)$ and $\chi_-(t)$ are unbounded and their velocities $u_+(t)$ and $u_-(t)$ satisfy
\[
\lim_{t\to\pm\infty}u_{\pm}(t) = \pm \sqrt{c}.
\]

It is sufficient to prove that, given any $\eps>0$, for sufficiently large $r\in\N$, the value 
$c$ associated with the optimal characteristic arriving to to $(0,r)$ is positive and  
$\sqrt{c}\in(\theta-\eps,\theta+\eps)$.

\begin{lemma}\label{lem:asymptotic-linear-time} For all $c_0\in(0,1)$, we have
\begin{equation*}
 \lim_{r\to + \infty}\sup_{c\in[c_0,1]}\left|\frac{t_+(c,r)}{r}+\frac{1}{\sqrt{c}}\right|=0.
\end{equation*}
\end{lemma}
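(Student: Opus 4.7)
The strategy is to write $t_+(c,r)$ as an explicit integral, identify its pointwise $r'\to\infty$ limit, and then upgrade this to uniform convergence in $c\in[c_0,1]$ via a dominated-type argument. From the conserved quantity in \eqref{eq:conserved-quantity}, the outgoing characteristic with $C=c$ satisfies
\[
u(\chi)=\sqrt{c+(1-c)\frac{2M}{\chi}},\qquad \chi'(t)=\Big(1-\frac{2M}{\chi}\Big)u(\chi),
\]
and the normalization $\chi(0)=r$ together with $\chi(t_+(c,r))=r_*$ yields, after separating variables,
\[
-t_+(c,r)=\int_{r_*}^{r}\frac{dr'}{\big(1-\frac{2M}{r'}\big)\sqrt{c+(1-c)\frac{2M}{r'}}}.
\]

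The main step is then the algebraic identity
\[
\frac{t_+(c,r)}{r}+\frac{1}{\sqrt c}
=\frac{1}{r}\int_{r_*}^{r}\! g(r',c)\,dr'+\frac{r_*}{r\sqrt c},\quad
g(r',c):=\frac{1}{\sqrt c}-\frac{1}{\big(1-\frac{2M}{r'}\big)\sqrt{c+(1-c)\frac{2M}{r'}}}.
\]
The remainder term $r_*/(r\sqrt c)$ is clearly $o(1)$ uniformly for $c\ge c_0$, so everything hinges on controlling the averaged integrand $g(r',c)$.

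For this I would establish two uniform estimates on $[r_*,\infty)\times[c_0,1]$. First, since $c\in[c_0,1]$ and $(1-c)\frac{2M}{r'}\ge 0$, one has $c+(1-c)\frac{2M}{r'}\in[c_0,1]$ and $1-\frac{2M}{r'}\in[1-\frac{2M}{r_*},1]$, giving a uniform upper bound $|g(r',c)|\le K$ for some $K=K(c_0,r_*,M)<\infty$. Second, a direct Taylor expansion shows
\[
g(r',c)=\frac{1}{\sqrt c}-\frac{1}{\sqrt c}\Big(1+O(r'^{-1})\Big)=O(r'^{-1}),
\]
where the constant in the $O$-symbol depends only on $c_0$, $r_*$, $M$; in particular $\sup_{c\in[c_0,1]}|g(r',c)|\to 0$ as $r'\to\infty$.

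Given $\varepsilon>0$, choose $R_0$ so that $\sup_{c\in[c_0,1]}|g(r',c)|<\varepsilon$ for $r'\ge R_0$. Then for $r>R_0$ and any $c\in[c_0,1]$,
\[
\Big|\frac{1}{r}\int_{r_*}^{r}g(r',c)\,dr'\Big|
\le \frac{K(R_0-r_*)}{r}+\varepsilon\cdot\frac{r-R_0}{r},
\]
which is bounded by $2\varepsilon$ once $r$ is large enough, uniformly in $c$. Combining with the $r_*/(r\sqrt c)$ remainder yields the claim. The only subtlety is the uniform positive lower bound on $c+(1-c)(2M/r')$, which fails as $c_0\downarrow 0$; this is precisely where the hypothesis $c\ge c_0>0$ is used, and it is the sole obstacle to the same conclusion on all of $(0,1]$.
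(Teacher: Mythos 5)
Your proposal is correct and takes essentially the same route as the paper: its two-line argument --- the characteristic spends a bounded time in any $[r_*,R]$ while its speed is uniformly within $\delta$ of $\sqrt{c}$ once $\chi(t)>R$ --- is precisely your splitting of $\frac{1}{r}\int_{r_*}^{r}g(r',c)\,dr'$ at $R_0$ into a $K(R_0-r_*)/r$ piece and an $\varepsilon$ piece. Your explicit integral representation of $t_+(c,r)$ simply makes the paper's qualitative statement quantitative, including the uniformity in $c\ge c_0$ that the paper leaves implicit.
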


\begin{proof}
For any $R>r_*$ the time spent by the characteristic $\chi_+$ inside $[r_*,R]$ is finite. 
The lemma follows since 
for any $\delta>0$, one can choose $R$ so that $|u_+(t)-\sqrt{c}|<\delta$
whenever $\chi(t)>R$.  
\end{proof}

Let us first exclude minimizers arriving to $r$ with the value of $c$ satisfying 
$\sqrt{c}>\theta+\eps$, i.e. $c>(\theta+\eps)^2$. For brevity, in this part of the proof, we write $t(c,r)$ for $t_+(c,r)$.

Due to Lemma~\ref{lem:asymptotic-linear-time},
for sufficiently large $r$, 
\[
 t(\theta^2,r)<-\frac{r}{\theta+\eps/3},
\]
and, under the assumption we have made on $\sqrt{c}$,
\[
 t(c,r)>-\frac{r}{\theta+2\eps/3},
\]
so, 
if $\eps$ is sufficiently small, 
\begin{equation}
\label{eq:difference-in-departure-points-2}
 t(c,r)-t(\theta^2,r)\ge r \frac{\eps}{3(\theta+2\eps/3)(\theta+\eps/3)}\ge  r \frac{\eps}{4\theta^2}. 
\end{equation}

Denoting the characteristic associated with $t(c,r)$ by $\chi_{c}$, noticing that the value of $c$ obtained in~\eqref{eq:u_infty} is exactly the constant in front of the time factor in the expression for the flux in~\eqref{eq:variational-principle-for-Schw-def-of-F}, and using Lemma~\ref{lem:P-subl}, we can represent the actions of characteristics as
\begin{equation}
\label{eq:asymptotic-action-of-long-unbounded--theta}
A^{t(c,r),0}(\chi_{c})=\frac{c}{2}|t (c,r)|+\beta(c,r),
\end{equation}
where for any $c_0\in (0,1)$, 
\begin{equation}
 \lim_{r\to + \infty} \frac{\sup_{c\in[c_0,1]}|\beta(c,r)|}{r}=0
\label{eq:sub-linear-contrib-from-potential} 
\end{equation}
due to Lemma~\ref{lem:P-subl} and an obvious estimate $\limsup_{r\to + \infty} |t(c_0,r)|/r< + \infty$.
Either the total action of  
$\gamma_{r*,r*}^{t(\theta^2,r),t(c,r)}$ and $\chi_{c}$ is greater than the action of
 $\chi_{\theta^2}$, in which case $\chi_{c}$ cannot be a part of a minimizer, or 
\[
 A^{t(\theta^2,r),t(c,r)}(\gamma_{r*,r*}^{t(\theta^2,r),t(c,r)})
 +A^{t(w,r),0}(\chi_{c})\le A^{t(\theta^2,r),0}(\chi_{\theta^2}),
\]
which can be rewritten, using \eqref{eq:asymptotic-action-of-long-unbounded--theta}  as
\[
 S^{t(\theta^2,r),t(c,r)}+ \frac{c}{2}|t(c,r)|+\beta(c,r)\le
\frac{\theta^2}{2}|t (\theta^2,r)|+\beta(\theta^2,r),
\]
 i.e.
\begin{align}\notag
&
S^{t(\theta^2,r),t(c,r)}-\rho(t(c,r)-t(\theta^2,r))
\\ \notag
&\le 
-\rho(t(c,r)-t(\theta^2,r))-\frac{c}{2}|t(c,r)|-\beta(c,r) +\frac{\theta^2}{2}|t(\theta^2,r)|+\beta(\theta^2,r)
\\ \notag
&\le 
\frac{\theta^2}{2}(|t(\theta^2,r)|-|t(c,r)|)-\frac{c}{2}|t(c,r)|-\beta(c,r) +\frac{\theta^2}{2}|t(\theta^2,r)|+\beta(\theta^2,r)
\\ \notag
&\le  
\frac{\theta^2}{2}\left(\frac{r}{\theta}-\frac{r}{\sqrt{c}}\right)-\frac{c}{2}\frac{r}{\sqrt{c}} +\frac{\theta^2}{2}\frac{r}{\theta}+o(r)
\\ \notag
&\le \frac{r}{2}\left(2\theta-\frac{\theta^2}{\sqrt{c}}-\sqrt{c} +o(1)\right)
\\
&\le \frac{r}{2}\left(f(\sqrt{c}) +o(1)\right). \label{eq:dev-of-erg-ave-1}
\end{align}
Here, we used the definition of $\theta$ in the second inequality; $o(r)$ and $o(1)$, $r\to + \infty$, are uniform over  $c\in[c_0,1]$, due to Lemma~\ref{lem:asymptotic-linear-time} and \eqref{eq:sub-linear-contrib-from-potential}; 
and
\begin{equation}
\label{eq:auxiliary-f}
f(w)=2\theta-\frac{\theta^2}{w}-w.
\end{equation}
Since $f(w)$ decays for $w>\theta$, under our assumption $\sqrt{c}>\theta+\eps$, we obtain
\[
f(\sqrt{c})\le f(\theta+\eps)= 2\theta-\frac{\theta^2}{\theta+\eps}-(\theta+\eps)
=\frac{(\theta-\eps)(\theta+\eps)-\theta^2}{\theta+\eps}=-\frac{\eps^2}{\theta+\eps}\le - \frac{\eps^2}{2\theta}.
\]
Lemma~\ref{lem:asymptotic-linear-time} implies that for sufficiently large $r$,
\[
t(c,r)-t(\theta^2,r)\le 2 \left(\frac{1}{\theta} -\frac{1}{\sqrt{c}}\right)r \le \frac{4}{\theta}r,
\]
so using the last two displays and~\eqref{eq:dev-of-erg-ave-1}, we obtain for sufficiently large $r$,
\[
\frac{S^{t(\theta^2,r),t(c,r)}}{t(c,r)-t(\theta^2,r)}-\rho
\le \frac{-\frac{r}{2}\cdot\frac{\eps^2}{3\theta}}{\frac{4}{\theta}r}\le - \frac{\eps^2}{24}.
\]
Now, \eqref{eq:large-deviation-assumption-Schw} and \eqref{eq:difference-in-departure-points-2}
imply that the probability of this is bounded by 
$$
\alpha(r\eps/(4\theta^2), \eps^2/24).
$$
 Therefore,
the assumption \eqref{eq:mixing-rate-Schw} and
the Borel--Cantelli lemma imply that for sufficiently large $r\in\N$, the minimizer with endpoint $(0,r)$ cannot have $c>(\theta+\eps)^2$.

Let us fix $c_0\in(0,\theta^2/2)$ and exclude  minimizers with $c\in[c_0,(\theta-\eps)^2]$. Lemma~\ref{lem:asymptotic-linear-time} implies that for sufficiently large $r$, 
\[
 t(\theta^2,r)>-\frac{r}{\theta-\eps/3},
\]
and, under the assumption we have made on $c$, 
\[
 t(c,r)<-\frac{r}{\theta-2\eps/3}
\]
(due to monotonicity of characteristics with respect to $c$),
where $t(c,r)$ stands for either $t_+(c,r)$ or $t_-(c,r)$.
So 
\begin{equation}
\label{eq:difference-in-departure-points}
 t(\theta^2,r)-t(c,r)\ge r \frac{\eps}{3(\theta-2\eps/3)(\theta-\eps/3)}\ge  r \frac{\eps}{3\theta^2}. 
\end{equation}

Now, either the total action of  $\gamma_{r*,r*}^{t(w,r),t(\theta^2,r)}$ and $\chi_{\theta^2}$
is less than the action of  $\chi_c$, in which case~$\chi_c$ cannot be a minimizer, or 
\[
 A^{t(c,r),t(\theta^2,r)}(\gamma_{r*,r*}^{t(c,r),t(\theta^2,r)})+A^{t(\theta^2,r),0}(\chi_{\theta^2})\ge A^{t(c,r),0}(\chi_c),
\]
which can be rewritten as
\[
 S^{t(c,r),t(\theta^2,r)}+\frac{\theta^2}{2}|t(\theta^2,r)|+\beta(\theta^2,r)\ge \frac{c}{2}|t(c,r)|+\beta(c,r),
\]
 i.e.
\bel{eq:dev-of-erg-ave-2}
\aligned
&
S^{t(c,r),t(\theta^2,r)}-\rho(t(\theta^2,r)-t(c,r))
\\  
&\ge 
-\rho(t(\theta^2,r)-t(c,r))+\frac{c}{2}|t(c,r)|+\beta(c,r) -\frac{\theta^2}{2}|t(\theta^2,r)|-\beta(\theta^2,r)
\\  
&\ge 
\frac{\theta^2}{2}(|t(c,r)|-|t(\theta^2,r)|)+\frac{c}{2}|t(c,r)|+\beta(c,r) -\frac{\theta^2}{2}|t(\theta^2,r)|-\beta(\theta^2,r)
\\  
&\ge  
\frac{\theta^2}{2}\left(\frac{r}{\sqrt{c}}-\frac{r}{\theta}\right)+\frac{c}{2}\frac{r}{\sqrt{c}} -\frac{\theta^2}{2}\frac{r}{\theta}+o(r)
\\  
&\ge \frac{r}{2}\left(-2\theta+\frac{\theta^2}{\sqrt{c}}+\sqrt{c} +o(1)\right)
 \ge \frac{r}{2}\left(-f(\sqrt{c}) +o(1)\right), 
\endaligned
\ee
where $o(r)$ and $o(1)$, $r\to\infty$, are uniform over  $c\in[c_0,1]$, due to Lemma~\ref{lem:asymptotic-linear-time} and \eqref{eq:sub-linear-contrib-from-potential}, 
and $f$ is given in~\eqref{eq:auxiliary-f}. Since $f(w)$ grows for $w<\theta$, under our assumption $\sqrt{c}<\theta-\eps$, we obtain
\[
f(\sqrt{c})\le f(\theta-\eps)= 2\theta-\frac{\theta^2}{\theta-\eps}-(\theta-\eps)
=\frac{(\theta-\eps)(\theta+\eps)-\theta^2}{\theta+\eps}=-\frac{\eps^2}{\theta-\eps}\le - \frac{\eps^2}{\theta}. 
\]
Then, Lemma~\ref{lem:asymptotic-linear-time} implies that for sufficiently large $r$,
\[
t(\theta^2,r)-t(c,r)\le 2 \left(\frac{1}{\sqrt{c}}-\frac{1}{\theta}\right)r \le \frac{4}{c_0}r,
\]
so using the last two displays and~\eqref{eq:dev-of-erg-ave-2}, we obtain for sufficiently large $r$,
\[
 \frac{S^{t(w,r),t(\theta,r)}}{t(\theta,r)-t(w,r)}-\rho\ge \frac{\frac{r}{2}\frac{\eps^2}{\theta}}{\frac{4}{c_0}r}=\frac{c_0\eps^2}{8\theta}.
\]
Now, \eqref{eq:large-deviation-assumption-Schw} and \eqref{eq:difference-in-departure-points}
imply that the probability of this is bounded by 
$$
\alpha\left(\frac{r\eps}{3\theta^2}, \frac{c_0\eps^2}{8\theta}\right). 
$$
Therefore,
the assumption \eqref{eq:mixing-rate-Schw} and
the Borel--Cantelli lemma imply that for sufficiently large $r\in\N$, the forward unbounded minimizer with endpoint $(0,r)$ cannot have $c\in[c_0,\theta-\eps]$.

It remains to exclude values of $c< c_0.$ For this, we take an arbitrary large  $M>0$ and use 
Lemma~\ref{lem:asymptotic-linear-time} and monotone dependence of characteristics on the terminal velocity to 
choose $c_0$ and $R>0$
so that $|t(c,r)|>Mr$ for all $c<c_0$ and $r>R$.  Here $t(c,r)$ represents both $t_+(c,r)$ and  $t_-(c,r)$.
Denoting the corresponding path by $\chi_c$ we recall that $A^{t(c,r),0}(\chi_c)>0$. So, if $\chi_c$ is a minimizer, then 
\[
 S^{t(c,r),t(\theta^2,r)}+\frac{\theta^2}{2}|t(\theta^2,r)|+\beta(\theta^2,r)\ge A^{t(c,r),0}(\chi_c)>0,
\]
which implies
\begin{align*}
  S^{t(c,r),t(\theta^2,r)}-\rho(t(\theta^2,r)-t(c,r))&> -\rho(t(\theta^2,r)-t(c,r))-\frac{\theta^2}{2}|t(\theta^2,r)|-\beta(\theta^2,r)
  \\
  &=\frac{\theta^2}{2}|t(c,r)|-\beta(\theta^2,r),
\end{align*}
and
\[
\frac{S^{t(c,r),t(\theta^2,r)}}{t(\theta^2,r)-t(c,r)}-\rho>\frac{\frac{\theta^2}{2}|t(c,r)|-\beta(\theta^2,r)}{t(\theta^2,r)-t(c,r)}.
\]
Taking any $\delta>0$ and choosing $M$ and $c_0$  to ensure that 
$t(\theta^2,r)-t(c,r)>r$ and that
the right-hand side of the last display exceeds $\delta$, we obtain that, due to \eqref{eq:large-deviation-assumption-Schw}, existence of a minimizer associated with $t(c,r)$
has probability bounded by
$\alpha(r,\delta)$. Therefore,
the assumption \eqref{eq:mixing-rate-Schw} and
the Borel--Cantelli lemma imply that for sufficiently large $r\in\N$, the forward unbounded minimizer with endpoint $(0,r)$ cannot have $c<c_0$.
This completes the proof of Theorem~\ref{thm:u(plus-infty)-Schw}.
\end{proof}


\subsection{Ergodicity properties for the integrated fluid density}
\label{sec:Schw-stationary-transport}

All the above results in this section concern only the first component of the two-component 
system~\eqref{eq:ES}.  As for the second component, we know that $v$ is constant along characteristics, and that if $\rho<0$,
then there is a uniquely defined global foliation of space-time into characteristics that gives rise to the stationary
global solution $u$. Combining these facts we obtain that the only global solution $v$ compatible with
the global solution $u$ is constructed in the following way:

For every point $(t,r)$, there is a characteristic giving rise to the global solution $u(t,r)$. We denote by $t_*(t,r)$ the time when this characteristic leaves $r_*$ for the last time and set
\begin{equation}
\label{eq:global_v_Schw} 
v(t,x)=\psi(t_*(t,r)),\quad (t,x)\in\R\times (r_*,+\infty).
\end{equation}
This gives existence of the global solution  $(u_\omega, v_\omega)$ of the system~~\eqref{eq:ES}
and, under additional assumptions of Theorem~\ref{thm:attraction-Schw} 
we also obtain 1F1S (uniqueness and attraction) for these global solutions,
analogously to Theorem~\ref{thm:v}:
\begin{theorem}\label{thm:v-Schw} 
Consider the pressureless Euler--Schwarzschild system~\eqref{eq:Eu-Schw}
in $[r_*, +\infty)$ with stationary random boundary conditions given by ~\eqref{eq:Eu-Schw-boundary-conds}.
\begin{enumerate}  
\item (Existence.) If $\rho<0$, where $\rho$ has been defined in~\eqref{eq:linear-due-to-subadd}, then
equations~\eqref{eq:global_u_Schw} and \eqref{eq:global_v_Schw} define a global stationary solution $(u_\omega, v_\omega)$.
\item (Uniqueness.) If, additionally, a number $p\in\R$ satisfies one of the 
conditions~\eqref{eq:p_in_01-take-2} or~\eqref{eq:p_in_-10-take-2}  holds, then
every stationary global solution $(\tilde u_\omega, \tilde v_\omega)$ of the system~\eqref{eq:Eu-Schw} such that $u_\omega(t,\cdot)\in \V_p$ coincides with $(u_\omega, v_\omega)$ with probability~1.
  \item (Pullback attraction.) Under the conditions of part 2 of this theorem, the solution $(u_\omega, v_\omega)$ is a random one-point pullback attractor, i.e., with probability 1, for every pair $(w,w')$  with $w\in\V_{p}$ and every $r>r_*$, there is $t_0<0$
  such that for every $t<t_0$, the solution of the pressureless Euler system with initial condition $(w,w')$ assigned at $t$, coincides with $(u_\omega,v_\omega)$ on $\{0\}\times(r_*,r]$.
\end{enumerate}
\end{theorem}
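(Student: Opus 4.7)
The plan is to reduce each of the three assertions to the corresponding statement for the velocity $u$ (Theorems~\ref{thm:Schw-global-sol} and~\ref{thm:attraction-Schw}), exploiting the fact that $v$ is transported along the characteristics produced by $u$ and that, when $\rho<0$, these characteristics unavoidably reach the boundary $r_*$ in finite backward time.

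For existence, I would take $u_\omega$ to be the stationary global solution provided by Theorem~\ref{thm:Schw-global-sol}. Lemmas~\ref{lem:minimizer-cannot-avoid-boundary} and~\ref{lem:constructing-global-solutions-take-2} ensure that the right-most minimizer $\gamma^{t_0,t}_{r_*,r}$ stabilizes as $t_0\to-\infty$ and has a well-defined last departure time $t_*(t,r)\in(-\infty,t)$ from the boundary, beyond which the path is a pure characteristic. Setting $v_\omega(t,r)=\psi(t_*(t,r))$ as in~\eqref{eq:global_v_Schw} produces a function which, away from the at most countable shock set at time $t$, is constant along the associated characteristic and hence solves $\del_t v+(1-2M/r)\,\uhat\,\del_r v=0$ in the weak BV sense of Section~\ref{sec35}. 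Stationarity then follows from the fact that $t_*(t,r)$ is a measurable functional of the driving history, combined with $u_\omega(t,\cdot)=u_{\theta^t\omega}(\cdot)$ and $\psi(t,\omega)=\psi(\theta^t\omega)$.

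For uniqueness, if $(\tilde u_\omega,\tilde v_\omega)$ is a competing stationary global solution with $\tilde u_\omega(0,\cdot)\in\V_p$, part~2 of Theorem~\ref{thm:attraction-Schw} immediately gives $\tilde u_\omega=u_\omega$ almost surely. Since $\tilde v_\omega$ is constant along the (now common) characteristic foliation, and since the characteristic through $(t,r)$ traces back to $(t_*(t,r),r_*)$, the boundary condition~\eqref{eq:Eu-Schw-boundary-conds} forces $\tilde v_\omega(t,r)=\psi(t_*(t,r))=v_\omega(t,r)$. The pullback attraction statement is similar: for any $(w,w')$ with $w\in\V_p$ and any $r>r_*$, part~1 of Theorem~\ref{thm:attraction-Schw} gives a threshold $T_0<0$ such that for every $t_0<T_0$ the $u$-component of the solution agrees with $u_\omega$ on $(r_*,r]$, and the argument there shows in fact that the minimizing path to any point of $(r_*,r]$ at time $0$ departs from $r_*$ at some time in $[T_0,0]$, independently of $w$. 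The value of $v$ at that point is therefore read off $\psi$ alone, not from $w'$, yielding $v(0,r')=v_\omega(0,r')$ on $(r_*,r]$.

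The main point of care is the verification that $v_\omega$ defined by~\eqref{eq:global_v_Schw} is indeed a weak solution in the sense of the definition surrounding~\eqref{eq:Eu-Schw}, since the product $\uhat\,\del_r v$ must be interpreted \emph{\`a la} Volpert at the at most countable shock set where two characteristics merge. I expect this to follow from the superposition argument already invoked by LeFloch in Section~\ref{sec35}: the two converging characteristics typically carry the same value of $\psi\circ t_*$, and when they do not, the right-hand trace $v_\omega(t,r+)$ matches the value along the right-most minimizer, which is the convention adopted throughout the paper.
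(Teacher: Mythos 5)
Your proposal is correct and follows essentially the same route as the paper, which proves this theorem only by the brief observation that $v$ is transported along the characteristic foliation determined by the global solution $u$ of Theorem~\ref{thm:Schw-global-sol}, so that $v_\omega(t,r)=\psi(t_*(t,r))$ and uniqueness/attraction are inherited from Theorem~\ref{thm:attraction-Schw} exactly as you argue. Your write-up is in fact more detailed than the paper's (notably the stabilization of the right-most minimizer and the Volpert-product discussion at shocks, where your right-continuity convention matches the one the paper adopts in Section~\ref{sec35}), and contains no gap.
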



\section{Concluding remarks}
\label{sec7}

\subsection{Controlling the time of return to a given location}

We conclude this paper with several additional observations. First, we observe that the lower bound in \eqref{eq:timeofreturn} below vanishes when the path begin with a vanishing velocity and tends to infinity as the initial velocity approaches the escape velocity. 

\begin{lemma} Let $\chi$ be a solution to the characteristic equation defined in some interval $[t_0, t_1]$ and initiating at some point $(t_0, r_0)$ with $r_0 \geq r_* > 2M$ at a positive and sub-escape velocity $u_0 < u_0^E$. Suppose that this path return back to the same location at time $t_1$, that is suppose that $\chi(t_1)=\chi(t_0)=r_0$. Then one has the inequality
\bel{eq:timeofreturn}
t_1 - t_0 \geq 4M u_0 {1 - u_0^2 \over (u_0^E)^2 - u_0^2}.
\ee
\end{lemma}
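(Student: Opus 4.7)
The plan is a simple distance-over-maximal-speed estimate, made effective by the first integral of motion~\eqref{eq:integral-of-motion}.

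First, I would identify the structure of the trajectory. Since $u(t_0)=u_0\in(0,u_0^E)$, the characteristic initially moves outward. The ODE $u'(t)=M(u^2-1)/r^2$ shows that $u$ is strictly decreasing, so $u$ must cross zero at a unique time $t_\ast\in(t_0,t_1)$, at which $\chi(t_\ast)=r_{\max}$ is the turning radius. From the conservation law~\eqref{eq:31}, $r_{\max}$ is determined by $(1-u_0^2)(1-2M/r_{\max})=1-2M/r_0$, giving
\[
r_{\max}=\frac{2M(1-u_0^2)}{(u_0^E)^2-u_0^2}, \qquad
r_{\max}-r_0=\frac{r_0\, u_0^2\bigl(1-(u_0^E)^2\bigr)}{(u_0^E)^2-u_0^2}.
\]
Between $t_\ast$ and $t_1$ the characteristic moves inward (by conservation, $u(t_1)=-u_0$), so the total radial distance traveled on $[t_0,t_1]$ is exactly $2(r_{\max}-r_0)$.

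Next, I would derive a uniform bound on the radial speed $|r'(t)|=(1-2M/r(t))|u(t)|$ on $[t_0,t_1]$. The trajectory is confined to $r(t)\in[r_0,r_{\max}]$, on which $1-2M/r$ is increasing, so
\[
1-\frac{2M}{r(t)}\le 1-\frac{2M}{r_{\max}}=\frac{1-(u_0^E)^2}{1-u_0^2}.
\]
Monotonicity of $u$ combined with $u(t_1)=-u_0$ gives $|u(t)|\le u_0$. Multiplying the two bounds yields
\[
|r'(t)|\le \frac{u_0\bigl(1-(u_0^E)^2\bigr)}{1-u_0^2}, \qquad t\in[t_0,t_1].
\]
The elementary distance-over-speed inequality then produces
\[
t_1-t_0\ge \frac{2(r_{\max}-r_0)(1-u_0^2)}{u_0\bigl(1-(u_0^E)^2\bigr)}
= \frac{2r_0\, u_0(1-u_0^2)}{(u_0^E)^2-u_0^2}\ge \frac{4Mu_0(1-u_0^2)}{(u_0^E)^2-u_0^2},
\]
where the last step uses $r_0>2M$.

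The only mildly subtle point is that the two factors $(1-2M/r)$ and $|u|$ attain their respective maxima at opposite endpoints of the trajectory ($r=r_{\max}$ and $r=r_0$), so the uniform upper bound employed is not tight at any single instant. Nevertheless, after the cancellation of $(1-(u_0^E)^2)$ between the numerator coming from $r_{\max}-r_0$ and the denominator coming from the speed bound, the factor $(1-u_0^2)$ reappears in the numerator, which is precisely what is needed to recover the target inequality.
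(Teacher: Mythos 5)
Your proof is correct and follows essentially the same route as the paper's: locate the turning radius $r_{\max}=2M\frac{1-u_0^2}{(u_0^E)^2-u_0^2}$ via the first integral \eqref{eq:integral-of-motion}, bound the radial speed uniformly by $\frac{u_0(1-(u_0^E)^2)}{1-u_0^2}$, and use a distance-over-speed estimate (the paper does the two legs separately; you do both at once, which is the same idea). In fact your exact identity $r_{\max}-r_0=\frac{r_0\,u_0^2(1-(u_0^E)^2)}{(u_0^E)^2-u_0^2}$ gives the slightly sharper bound $t_1-t_0\geq \frac{2r_0\,u_0(1-u_0^2)}{(u_0^E)^2-u_0^2}$, and it corrects a small slip in the paper, which states this radial difference with $2M$ in place of $r_0$ --- harmless for the lemma since $r_0>2M$.
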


\begin{proof} We consider the point $(t_2, r_2)$  at which the path changes its direction, that is, when $\chi'(t_2) = 0$ with $t_2 \in (t_0, t_1)$ and $r_2 > r_0$. From the ODE satisfied by $\chi(t)$ we find that $\chi'(t)$ vanishes precisely when $1 - (1 - 2M/r_2) {1 - u_0^2 \over 1 - (u_0^E)^2} = 0$, which yields us 
$$
r_2 = 2M {1 - u_0^2 \over (u_0^E)^2 - u_0^2}, 
\quad \text{ therefore } \quad 
r_2 - r_0 = 2M u_0^2 {1 - (u_0^E)^2 \over (u_0^E)^2 - u_0^2}.
$$
Moreover, from the same ODE we also find
$$
r_2 - r_0 \leq (t_2 - t_0) {1 - (u_0^E)^2 \over 1- u_0^2} u_0, 
$$
hence the time for the path to change its direction is estimated as
$$
t_1 - t_0 \geq 2M u_0 {1 - u_0^2 \over (u_0^E)^2 - u_0^2}.
$$
The same estimate also holds for the time needed to return to its initial location. 
\end{proof}


\subsection{The Burgers equation with moving boundaries}

In the case that one cannot control the whole of the spacetime, it could useful to have a notion of solution defined in a moving domain, and we present such a concept for the standard Burgers equation. These {\sl solutions with a boundary} are defined in a compact interval only and the boundary of the interval ``moves'' at the characteristic speed. 
Observe that the solution is {\sl not defined outside} the domain of interest. 
By including both the effects of the curved geometry and a boundary condition, a similar definition and theorem can be stated for the Euler--Schwarzschild equation with moving boundary. 

\begin{definition} Consider the classical  Burgers equation. A {\em weak solution with moving boundaries} 
 is a function $u=u(t,x) \in \RR$ defined for all $t \geq 0$ and for $x \in (\phi_0(t), \phi_1(t)$ such that 
\bei

\item $u=u(t,x)$ has bounded variation in $x \in [0,1]$. 

\item $u_t + (u^2/2)_x=0$ holds in the weak sense in the domain of definition. 

\item $\phi_0(0) = 0$ and $\phi_1(0) = 1$. 
 
\item $\varphi_0'(t)= u(t, \phi_0(t))$ and $\varphi_1'(t)= u(t, \phi_1(t))$ for all but countably many times $t>0$. 

\eei 
\end{definition}

\begin{theorem}[Existence of weak solutions with moving boundaries]
 The Cauchy problem for the Burgers equation with prescribed initial data $u_0 \in L^1(0,1)$, that is 
$$
u(0,x)= u_0(x), \quad x \in [0,1]
$$
admits a `weak solution with moving boundaries', which is determined by the explicit formula 
\be
u(t,x)= {x-y(t,x) \over t}, \qquad \phi_0(t) \leq x \leq \phi_1(t), 
\ee 
where for each time $t>0$ and each $x \in \RR$, one denotes by $y(t,x)$ any point achieving the minimum of
\be
y \in [0,1] \mapsto \int_0^y u_0 \, dz + {(x-y)^2 \over 2t}
\ee
and the solution boundaries are defined by 
\be
\aligned
\phi_0(t) & = \max \big\{  x \, / \,     y(t, x') =0  \text{ for all } x'<x             \big\}, 
\\
\phi_1(t) &= \min \big\{  x \, / \,     y(t, x') =1  \text{ for all } x'>x             \big\}.
\endaligned
\ee
\end{theorem}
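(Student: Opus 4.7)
The approach is to follow the classical Hopf--Lax--Oleinik construction, but with the spatial minimization constrained to the initial segment $[0,1]$ instead of all of $\RR$, and to identify the moving boundaries $\phi_0$, $\phi_1$ as the interfaces between the region where the constrained minimizer sits at an endpoint of $[0,1]$ and the region where it lies strictly in the interior.

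I would first set $U_0(y)=\int_0^y u_0(z)\,dz$ and note that, for $t>0$, the function $J_{t,x}(y)=U_0(y)+(x-y)^2/(2t)$ is continuous on $[0,1]$, so a minimizer exists; define $y(t,x)$ to be the smallest one. Strict convexity of $y\mapsto(x-y)^2/(2t)$ and the standard no-crossing argument give that $x\mapsto y(t,x)$ is non-decreasing with at most countably many jumps; at points of continuity the minimizer is unique and the value function $U(t,x)=\min_{y\in[0,1]} J_{t,x}(y)$ is differentiable in $x$ with $\partial_x U=(x-y(t,x))/t$. Hence $u$ is BV in $x$ on bounded intervals and satisfies Oleinik's one-sided bound $u(t,x)-u(t,x')\le (x-x')/t$ for $x>x'$. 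By monotonicity of $y(t,\cdot)$, the sets $\{x:y(t,x)=0\}$ and $\{x:y(t,x)=1\}$ are intervals, so $\phi_0(t)$ and $\phi_1(t)$ are well defined and $y(t,x)\in(0,1)$ on $(\phi_0(t),\phi_1(t))$. On this open set the constraint is inactive, so $U$ agrees locally with the unconstrained Hopf--Lax--Oleinik value, is a viscosity solution of $U_t+\tfrac12 U_x^2=0$, and differentiation in $x$ gives the Burgers equation for $u$ in the sense of distributions inside the moving domain. The initial conditions $\phi_0(0)=0$ and $\phi_1(0)=1$ follow from $y(t,x)\to x$ as $t\to 0^+$ for $x\in(0,1)$, which is forced by the quadratic penalty.

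To verify the boundary ODEs, I observe that on $\{x<\phi_0(t)\}$ the minimizer is at $y=0$, so $u(t,x)=x/t$, a classical centered rarefaction from the origin. The one-sided trace is $u(t,\phi_0(t)^-)=\phi_0(t)/t$, and at every $t$ where $y(t,\cdot)$ is continuous at $\phi_0(t)$ this coincides with $u(t,\phi_0(t))$. The line $s\mapsto Cs$ through $(\phi_0(t),t)$ is a characteristic of Burgers, and because characteristics of the interior solution cannot cross (by Oleinik's bound), this line separates the rarefaction region from the interior for all nearby $s$, identifying $\phi_0$ locally with this characteristic and yielding $\phi_0'(t)=\phi_0(t)/t=u(t,\phi_0(t))$ for a.e.\ $t$. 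The argument for $\phi_1$ is entirely symmetric, using the trailing rarefaction from $y=1$.

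The main obstacle will be the regularity of $\phi_0$ and $\phi_1$ in $t$: monotonicity of $y(t,\cdot)$ in $x$ does not by itself yield Lipschitz regularity in $t$, and the clean identification of each boundary with a characteristic relies on the dynamic programming (semigroup) property of $U$ together with the Oleinik bound. Excluding the at most countable set of times at which $y(t,\phi_0(t))$ or $y(t,\phi_1(t))$ jumps is precisely what the statement's \emph{for all but countably many times} tolerates, and this flexibility is what allows the argument to close.
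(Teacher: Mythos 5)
Your overall route is the same as the paper's: constrain the Hopf--Lax--Oleinik minimization to $y\in[0,1]$, use monotonicity of $x\mapsto y(t,x)$ to see that the saturation sets $\{y=0\}$ and $\{y=1\}$ are half-lines, and read off $\phi_0,\phi_1$ as the interfaces. (The paper's own proof is in fact only a four-line sketch --- existence of a minimizer, the assertion that the formula yields a weak solution, and saturation of $y$ at $0$ and $1$ for $x$ far to the left/right --- so your interior analysis, Oleinik bound, and $t\to 0^+$ limits are a welcome elaboration rather than a divergence.) The genuine gap is in your verification of the boundary ODE $\phi_0'(t)=u(t,\phi_0(t))$. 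You argue that $\phi_0$ locally coincides with the characteristic line through the origin, so $\phi_0'(t)=\phi_0(t)/t$, after excluding ``the at most countable set of times at which $y(t,\phi_0(t))$ jumps.'' But countability of the non-uniqueness set is a fixed-$t$ statement in the $x$ variable; it does not transfer to the curve $t\mapsto(t,\phi_0(t))$. A shock can \emph{adhere} to the boundary on a whole time interval, and then your characteristic identification fails on a set of times of positive measure.

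Concretely, take $u_0=0$ on $[0,1/4]$ and $u_0=-1$ on $(1/4,1]$. The shock issued from $x=1/4$ travels with speed $-1/2$ and reaches the left boundary (which satisfies $\phi_0\equiv 0$ up to then) at $t=1/2$. For $t\in(1/2,2)$ the two competing minimizers at the interface are $y=0$ and the interior critical point $y=x+t$, with equality locus $(x+t)^2=t/2$, so $\phi_0(t)=-t+\sqrt{t/2}$ --- not a line through the origin. Here $\phi_0'(t)=-1+1/(2\sqrt{2t})$, whereas the two admissible values of $u(t,\phi_0(t))=(\phi_0(t)-y(t,\phi_0(t)))/t$ are the one-sided traces $-1+1/\sqrt{2t}$ (choosing $y=0$) and $-1$ (choosing $y=\sqrt{t/2}$): the boundary moves with the Rankine--Hugoniot \emph{average} of the traces, which equals neither, and this persists for every $t$ in the uncountable set $(1/2,2)$. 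So the step ``$\phi_0'(t)=\phi_0(t)/t$ for a.e.\ $t$'' is false in general, and the exceptional set is not countable. To close the argument one must either interpret $u(t,\phi_0(t))$ at jump points via the Volpert average $\uhat$ (as the paper does for the transport equation, making the ODE exactly the Rankine--Hugoniot condition for the boundary shock), or prove the ODE only at times when no shock sits on the boundary and handle the shock-adherence regime separately; as written, your proof of item 4 of the definition does not go through.
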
 

\begin{proof} 
Observe that $y$ varies in the interval $[0,1]$ only, but that $x$ is allowed to describe the real line.  
It is clear a minimizer $y(t,x) \in [0,1]$ always exists and the formula yields a weak solution to the Burgers equation. Also, for each fixed time $t$, we have $y(t,x) =0$ if $x$ is sufficiently negative, while 
 $y(t,x) =1$ if $x$ is sufficiently positive.  We then define the support of the solution as stated in the theorem. 
\end{proof}


\subsection{The Schwarzschild--Burgers equation in the outer domain of communication} 

Rather than solving in the ``half-space'' $r >r_*$ for some $r_* >2M$ as we did in Section~\ref{sec35}, one could solve the initial value problem for the Schwarzschild-Burgers equation in the outer domain of communication, that is, in the entire region $r>2M$. For this problem, {\sl  no boundary condition} is required on the boundary $r=2M$, which is to be expected ---since this boundary is the ``horizon'' of the Schwarzschild black hole from which no ``information'' can escape.  We thus restate Theorem~\ref{theo31} in the following form.  
The explicit formula therein is virtually the same, but with all of the boundary terms suppressed, and we do not repeat it here.

\begin{theorem}[Existence theory in the outer domain of communication]
\label{theo32}
Given any initial data $u_0: (2M, +\infty) \mapsto (-1,1)$ prescribed at some time $t_0 \in \RR$ and satisfying the integrability condition $(1-2M/r) \in L_\loc^1([2M, +\infty))$, the associated initial value problem for  the Schwarzschild-Burgers equation in the outer domain of communication admits a solution with locally bounded variation which is defined globally for all $t  \geq t_0$ and $r>2M$.
\end{theorem}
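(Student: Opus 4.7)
The plan is to mimic the proof of Theorem~\ref{theo31}, replacing the variational principle over paths that may dwell on a physical boundary $r=r_*$ by a variational principle over pure characteristic curves, and then to exploit the fact that the horizon $r=2M$ is only reachable asymptotically (recall the exponential bound $r(t)\simeq 2M+Ke^{-|t|/(2M)}$ established in Section~\ref{sec3}) so that for any finite time interval $[t_0,t_1]$ and any endpoint $(t_1,r_1)$ with $r_1>2M$, every backward characteristic from $(t_1,r_1)$ reaches the initial line $\{t=t_0\}$ at some finite $r_0>2M$. Concretely, I would define
\[
U(t,r):=-\int_r^{+\infty}\Big(1-{2M\over r'}\Big)^{-2} u(t,r')\,dr',
\]
(or, if the initial velocity fails integrability at infinity, subtract a static profile as in \eqref{eq:defU2}), and integrate the inequality \eqref{eq:339} along a backward characteristic $\chi(\cdot;t_1,r_1,u_1)$ as before. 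Since no dwelling on a boundary is allowed, the Hopf--Lax--Oleinik formula takes the reduced form
\[
U(t_1,r_1)=\min_{u_1\in(-1,1)}\Big(U_0(\chi(t_0;t_1,r_1,u_1))+\int_{t_0}^{t_1}F(\chi(t))\,dt\Big),
\]
with $F$ given by \eqref{eq:flux-on-characteristic}.

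The second step is to verify that the minimum is attained at an interior velocity $u_1\in(-1,1)$. The $u_1\to+1$ limit is handled exactly as in Theorem~\ref{theo31}, since a characteristic with positive near-light speed escapes to infinity and the corresponding action is dominated by test paths with moderate velocity. The $u_1\to -1$ limit requires slightly more care because here the backward characteristic hugs the horizon for most of the interval $[t_0,t_1]$ and the integrand $2M/(\chi(t)-2M)$ becomes large. However, $\chi^\sharp(t)$ associated with $u_1=-1$ still satisfies $\chi^\sharp(t)>2M$ for every $t\in[t_0,t_1]$ (the horizon is asymptotic, not attained in finite time), so the flux integral remains finite and the derivative computation
\[
{\partial\over\partial u_1}\Big(U_0(\chi(t_0))+\int_{t_0}^{t_1}F(\chi(t))\,dt\Big)\Big|_{u_1=-1}<0
\]
reproduces verbatim the calculation \eqref{eq:each1}--\eqref{eq:each2} from Theorem~\ref{theo31}, with the same sign analysis of the sensitivity $\xi(t)=\partial_{u_1}\chi(t)$. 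Continuity of the objective in $u_1\in(-1,1)$, together with the exclusion of both endpoints, then yields existence of a minimizer, and the weak solution is defined by $u(t_1,r_1)=\partial_{r_1} U(t_1,r_1)$ with the c\`adl\`ag convention at shock points.

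The main obstacle is to make the near-horizon analysis uniform as $u_1$ varies in a neighborhood of $-1$: one must check that $\int_{t_0}^{t_1} 2M/(\chi(t)-2M)\,dt$ is finite and depends continuously on $u_1$ even as the characteristic descends very close to the horizon. This follows from the fact that, for $u_1$ bounded away from $-1$ by a fixed amount, $\chi(t)-2M$ stays uniformly bounded below on $[t_0,t_1]$ by an explicit exponential, while for $u_1$ very close to $-1$ the same flux derivative computation shows that the objective is strictly decreasing in $u_1$, so that minimizers cannot accumulate at $-1$. Once the velocity component is obtained, the density component $v$ is constructed as in Section~\ref{sec35} by transport along the minimizing characteristic back to the initial line; no boundary datum is needed since no characteristic originates from the horizon in finite backward time. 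The statement about locally bounded variation in $r$ for $t>t_0$ is inherited from the standard properties of the Hopf--Lax--Oleinik minimization (monotonicity of minimizers in the endpoint).
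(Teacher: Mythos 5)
Your overall strategy is exactly the paper's: the paper gives no separate proof of Theorem~\ref{theo32}, saying only that one restates Theorem~\ref{theo31} with ``all of the boundary terms suppressed,'' and your reduced Hopf--Lax--Oleinik formula over pure characteristics, together with the observation that backward characteristics never reach $r=2M$ in finite time (so every minimizing curve lands on the initial line at some $r_0>2M$ and no horizon datum is needed), is precisely that plan. However, your execution contains a concrete geometric error: you have the two endpoint limits swapped. Since $\chi'(t_1)=(1-2M/r_1)\,u_1$, a terminal velocity $u_1$ near $-1$ means the backward curve moves \emph{outward} as $t$ decreases --- this is visible in the paper's own proof of Theorem~\ref{theo31}, where the $u_1=-1$ curve satisfies $e^{\chi^\sharp(t)-r_1}\bigl((\chi^\sharp(t)-2M)/(r_1-2M)\bigr)^{2M}=e^{-(t-t_1)}$, hence $\chi^\sharp(t)>r_1$ for $t<t_1$ and the flux $2M/(\chi^\sharp(t)-2M)$ is \emph{bounded}. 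It is the backward characteristics with $u_1$ near $+1$ that descend toward the horizon. Consequently your claim that the $u_1\to-1$ limit ``hugs the horizon'' is false, and your claim that the $u_1\to+1$ case ``escapes to infinity'' and reproduces Theorem~\ref{theo31} verbatim misidentifies where the genuinely new feature of the outer-domain problem lies: in Theorem~\ref{theo31} the near-$(+1)$ backward characteristics are intercepted by the boundary $r=r_*$, whereas here they sink into the near-horizon region where $2M/(\chi(t)-2M)$ becomes large, and that is the case requiring the new analysis.

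This mislabeling breaks your uniformity step as written: ``for $u_1$ bounded away from $-1$ by a fixed amount, $\chi(t)-2M$ stays uniformly bounded below'' is not true, since it is proximity of $u_1$ to $+1$, not to $-1$, that drives the curve toward $2M$. The repair is in fact simpler than what you propose: by monotonicity of the backward funnel in the terminal velocity, every backward characteristic from $(t_1,r_1)$ lies above the $u_1=+1$ light curve, and on the finite interval $[t_0,t_1]$ that light curve satisfies $\chi^{\mathrm{light}}(t_0)>2M$ (the approach to the horizon being only asymptotic, with the exponential rate recalled in Section~\ref{sec3}). Hence $\chi(t)-2M$ is bounded below uniformly over \emph{all} $u_1\in[-1,1]$, the flux integral $\int_{t_0}^{t_1}2M/(\chi(t)-2M)\,dt$ is finite and continuous in $u_1$ up to both endpoints, and the endpoint $u_1=+1$ is then excluded by the mirror image of the derivative computation \eqref{eq:each1}--\eqref{eq:each2} (showing the action is strictly increasing at $u_1=+1$), not by an escape-to-infinity comparison. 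With the roles of $\pm1$ corrected in this way, the rest of your argument --- the definition of $U$ (with the static-profile subtraction of \eqref{eq:defU2} for non-integrable data), the exclusion of $u_1=-1$ exactly as in the paper's proof of Theorem~\ref{theo31}, and the BV regularity from monotonicity of minimizers --- does go through and matches the paper's intent.
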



\bibliographystyle{plain-initials}
\bibliography{fluid-with-b}

\end{document}